\documentclass[a4paper,10pt]{article}
\usepackage[utf8x]{inputenc}
\usepackage{tracefnt,amsmath,tabu,array}
\usepackage{amssymb,graphicx,setspace,amsfonts,amsbsy}
\usepackage{pifont,latexsym,ifthen,amsthm,rotating,calc,textcase,booktabs}

\addtolength{\oddsidemargin}{-.5in}
\addtolength{\evensidemargin}{-.5in}
\addtolength{\textwidth}{1 in}

\addtolength{\topmargin}{-.5in}
\addtolength{\textheight}{.5in}

\newtheorem{theorem}{Theorem}[section]
\newtheorem{lemma}[theorem]{Lemma}
\newtheorem{corollary}[theorem]{Corollary}
\newtheorem{proposition}[theorem]{Proposition}
\newtheorem{definition}[theorem]{Definition}
\newtheorem{statement}[theorem]{Statement}

\newtheorem{remark}[theorem]{Remark}
\newcommand{\filledbox}{\leavevmode
  \hbox to.77778em{%
  \hfil\vbox to.675em{\hrule width.6em height.6em}\hfil}}

\DeclareMathOperator*{\esssup}{ess\,sup}

\newcommand{\Rm}{{\mathbb R}}

\newcommand{\Di}{{\mathbf T}}
\newcommand{\eps}{\varepsilon}

\title{Bounded Solutions to an Energy Subcritical Non-linear Wave Equation on $\Rm^3$}
\author{Ruipeng Shen}

\begin{document}
\maketitle

\begin{abstract}
In this work we consider an energy subcritical semi-linear wave equation ($3 < p < 5$)
\[
 \left\{\begin{array}{ll}
  \partial_t^2 u - \Delta u = \phi(x) |u|^{p-1} u,  & (x,t) \in \Rm^3 \times \Rm;\\
  u|_{t=0} = u_0 \in \dot{H}^{s_p}  (\Rm^3); & \\
  \partial_t u|_{t =0} = u_1 \in \dot{H}^{s_p-1} (\Rm^3); &
 \end{array}\right.
\]
where $s_p = 3/2 - 2/(p-1)$ and the function $\phi: \Rm^3 \rightarrow [-1,1]$ is a radial continuous function with a limit at infinity.
We prove that unless the elliptic equation $-\Delta W = \phi(x) |W|^{p-1} W$ has a nonzero radial solution $W \in C^2 (\Rm^3) \cap \dot{H}^{s_p} (\Rm^3)$, any radial solution $u$ with a finite uniform upper bound on the critical Sobolev norm $\|(u(\cdot,t), \partial_t u(\cdot,t))\|_{\dot{H}^{s_p}\times \dot{H}^{s_p}(\Rm^3)}$ for all $t$ in the maximal lifespan must be a global solution in time and scatter.
\end{abstract}

\section{Introduction}
\paragraph{Pure Power-type Nonlinearity} The nonlinear wave equation ($s_p = \frac{3}{2} - \frac{2}{p-1}$)
\[
\left\{\begin{array}{ll}
  \partial_t^2 u - \Delta u = \zeta |u|^{p-1} u,  & (x,t) \in \Rm^3 \times \Rm;\\
  u|_{t=0} = u_0 \in \dot{H}^{s_p}  (\Rm^3); & \\
  \partial_t u|_{t =0} = u_1 \in \dot{H}^{s_p-1} (\Rm^3); &
 \end{array}\right. \qquad (CP0)
\]
has been extensively studied in a lot of previous works. There are two different cases: the defocusing one with $\zeta = -1$ and the focusing one with $\zeta =1$. The latter case is usually more complicated and difficult to deal with. If the initial data are small, the sign of $\zeta$ does not play an important rule. For example, if $p> 1 + \sqrt{2}$, global existence and well-posedness of solutions with small initial data was proved in the papers \cite{smallgs2, smallgs3, gwpwrn, smallgs1} with even worse nonlinear term $|u|^p$. However, the behaviour of solutions is much different in these two cases if the initial data are large. For instance, if the nonlinear term is energy critical ($p=5$), any solution with a finite energy always exists for all time $t$ and scatters in the defocusing case, see \cite{mg1, mg2, ss1, ss2}. On the other hand, the solutions to the energy critical, focusing equation may scatter, blow up in finite time or stay unchanged for all time, as shown in \cite{secret, tkm1, kenig}. There are also lots of works on the energy subcritical case ($p < 5$, see \cite{kv1, shen2}) or the energy supercritical case ($p > 5$, see \cite{dkm2, km, kv2, kv3}).

\paragraph{Topic of this work} We consider a semi-linear wave equation with an more general energy subcritical nonlinearity ($3 < p < 5$)
\[
 \left\{\begin{array}{ll}
  \partial_t^2 u - \Delta u = \phi(x) |u|^{p-1} u,  & (x,t) \in \Rm^3 \times \Rm;\\
  u|_{t=0} = u_0 \in \dot{H}^{s_p}  (\Rm^3); & \\
  \partial_t u|_{t =0} = u_1 \in \dot{H}^{s_p-1} (\Rm^3); &
 \end{array}\right. \qquad (CP1)
\]
with radial data. Here $s_p = 3/2 - 2/(p-1)$ and the function $\phi: \Rm^3 \rightarrow [-1,1]$ is a radial continuous function with a well-defined limit
\[
 \lim_{|x| \rightarrow \infty} \phi (x) = \phi(\infty).
\]
Let us recall the main conclusion\footnote{This is slightly different from the original result, as its uniform boundedness condition is only concerning the positive time direction. But a careful review on the original proof reveals that this different version of theorem still holds.} of my previous work \cite{shen2} on the special case $\phi(x) \equiv \pm 1$
\begin{theorem} \label{theorem with phi pm 1}
Assume $3 < p < 5$. Let $u$ be a radial solution to the equation (CP0) with a maximal lifespan $(-T_-, T_+)$ and a uniform boundedness condition
\[
 \sup_{t \in [0, T_+)} \left\|(u(\cdot, t), \partial_t u(\cdot, t))\right\|_{\dot{H}^{s_p} \times \dot{H}^{s_p-1}(\Rm^3)} < \infty.
\]
Then $T_+ = + \infty$ and $u$ scatters in the positive time direction.
\end{theorem}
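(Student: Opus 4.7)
\emph{Proof plan.} The statement is the main theorem of the author's earlier paper \cite{shen2} for the pure-power nonlinearity $\phi\equiv\pm 1$, and I would follow the Kenig--Merle concentration-compactness/rigidity programme adapted to the critical Sobolev regularity $\dot{H}^{s_p}$ in the energy subcritical range $3<p<5$. The preparatory layer is the analytic framework in $\dot{H}^{s_p}\times\dot{H}^{s_p-1}$: Strichartz estimates for the free wave equation, a small-data scattering theorem, local well-posedness, a stability/perturbation lemma, and a scattering criterion asserting that a radial solution with uniformly bounded critical Sobolev norm scatters as soon as a suitable Strichartz-type space-time norm is globally finite. All of this is standard in the subcritical regime and I would quote it from the literature.

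The second step is the extraction of a critical element via the Bahouri--Gérard profile decomposition. Arguing by contradiction, set
\[
A_c = \inf\Bigl\{A\ge 0 : \text{some radial solution with uniform critical norm } \le A \text{ fails to scatter forward}\Bigr\},
\]
and assume $A_c < \infty$. Applying a profile decomposition to an almost-minimal non-scattering sequence of data and combining it with the perturbation lemma, one isolates a single surviving profile and produces a radial critical solution $u_c$ realising $A_c$, not scattering forward, whose forward orbit is precompact in $\dot{H}^{s_p}\times\dot{H}^{s_p-1}(\Rm^3)$ modulo the scaling symmetry $u \mapsto \lambda^{2/(p-1)} u(\lambda\cdot,\lambda\cdot)$. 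Radiality rules out translation parameters in the decomposition.

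The final and hardest step is the rigidity theorem: such a nonzero $u_c$ cannot exist. I would first obtain qualitative control of the modulation parameter $\lambda(t)>0$ associated with the compact orbit --- an upper or lower bound as $t$ approaches the lifespan endpoints --- and then split into cases according to whether $\lambda(t)$ stays bounded or tends to infinity. The analytic engine is the outer energy lower bound of Duyckaerts--Kenig--Merle for radial free waves, which guarantees that at least half of the exterior linear energy persists in one of the two time directions. Combined with finite speed of propagation, a virial- or Morawetz-type monotonicity identity adapted to the critical scaling, and the precompactness of the orbit, this should force $u_c\equiv 0$ and contradict $A_c<\infty$.

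The main obstacle, in my view, will be that at the critical Sobolev regularity $\dot{H}^{s_p}$ there is no conserved coercive quantity at the nonlinear level, so all decay and rigidity information must be squeezed from the channel-of-energy estimate and the radial pointwise bound in $\dot{H}^{s_p}$, while simultaneously tracking the nonlinear Duhamel term in Strichartz spaces at the critical regularity. This is the delicate harmonic-analytic heart of the argument and the step where the full strength of radial symmetry --- both in the profile decomposition and through the radial embedding into weighted $L^\infty$ spaces --- is used most heavily.
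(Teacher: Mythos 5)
Your proposal correctly identifies the overall architecture that \cite{shen2} uses and that this paper inherits: a Kenig--Merle concentration--compactness reduction via the Bahouri--G\'erard profile decomposition at $\dot{H}^{s_p}\times\dot{H}^{s_p-1}$ regularity, followed by a rigidity argument built on the Duyckaerts--Kenig--Merle exterior (channel-of-)energy inequality. The compactness half of your plan matches the paper's and requires no further comment.

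The rigidity half, however, has two genuine gaps. First, you leave out the regularity-upgrade step entirely, and it is load-bearing. The exterior energy inequality you invoke lives at $\dot{H}^1\times L^2$, while the critical element is only known to lie in $\dot{H}^{s_p}\times\dot{H}^{s_p-1}$ with $s_p<1$. Before the channel of energy can even be applied, one must prove that a critical element with a precompact trajectory actually has locally finite energy away from the origin, with the precise decay $u(x,t)=A/|x|+O(|x|^{-(p-2)})$ for a constant $A$ independent of $t$, together with the exterior local-energy bound $\int_{r<|x|<4r}(|\nabla u|^2+|u_t|^2)\,dx\lesssim r^{-1}$. In both \cite{shen2} and the present paper (Sections~5 and the statement quoted as Proposition~5.2/Proposition~\ref{enery channel 2}) this is obtained by passing to $w=ru$, rewriting the radial equation as a one-dimensional wave equation, and exploiting Duhamel together with strong Huygens' principle; none of this appears in your sketch, and without it there is nothing for the outer-energy estimate to grip. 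Second, your endgame is wrong in the focusing case. You propose to conclude $u_c\equiv 0$ directly from the outer-energy bound plus a ``virial- or Morawetz-type monotonicity identity adapted to the critical scaling''; but you yourself observe there is no conserved coercive quantity at $\dot{H}^{s_p}$ regularity, and no usable Morawetz/virial estimate is available here. What the channel of energy actually delivers, via the iterative ``essential radius of support'' argument (the analogues of Theorems~\ref{identity near infinity} and~\ref{behaviour of compact solution}), is that $u_c(\cdot,t)$ coincides for all $t$ with a radial stationary solution $W$ of the elliptic equation $-\Delta W=\pm|W|^{p-1}W$ sharing the $A/|x|$ tail. One then separately constructs and classifies such $W$ and shows that no nontrivial one lies in $\dot{H}^{s_p}(\Rm^3)$ --- only at that point does $u_c\equiv 0$ follow, because $u_c\in\dot{H}^{s_p}$. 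Skipping the comparison to stationary solutions is not a shortcut but a missing step; it is exactly the mechanism by which the focusing sign is handled.
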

The meaning of ``scattering'' here is explained in Remark \ref{three types of solutions}. In this work we generalize this result and prove
\begin{theorem} \label{main theorem work}
Any radial solution $u$ to (CP1) with a maximal lifespan $(-T_-,T_+)$ satisfying
\[
 \sup_{t \in [0, T_+)} \left\|(u(\cdot, t), \partial_t u(\cdot, t))\right\|_{\dot{H}^{s_p} \times \dot{H}^{s_p-1}(\Rm^3)} < \infty
\]
must exist for all time $t>0$ and scatter in the positive time direction, unless the elliptic equation
\[
 - \Delta W = \phi (x) |W|^{p-1} W
\]
has a nonzero radial solution $W_0 \in C^2 (\Rm^3) \cap \dot{H}^{s_p} (\Rm^3)$.
\end{theorem}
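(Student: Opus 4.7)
My plan is to follow the Kenig--Merle concentration-compactness and rigidity road map, extending the argument of \cite{shen2} that yields Theorem \ref{theorem with phi pm 1} to the variable-coefficient nonlinearity. Argue by contradiction: suppose there is a radial solution $u$ to (CP1) satisfying the uniform bound for which either $T_+<\infty$, or $T_+=\infty$ and $u$ fails to scatter, and suppose moreover that $-\Delta W=\phi(x)|W|^{p-1}W$ admits no nonzero radial solution in $C^2(\Rm^3)\cap\dot H^{s_p}(\Rm^3)$. I would derive a contradiction by producing precisely such a $W$ from a minimal non-scattering solution.

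\textbf{Local theory and the critical element.} Since $|\phi(x)|\le 1$, the Strichartz-based local well-posedness, perturbation theory, and small-data scattering developed for (CP0) transfer to (CP1) with essentially no modification, so ``scattering'' has the same meaning here as in Theorem \ref{theorem with phi pm 1}. Define the critical threshold
\[
S_c:=\inf\left\{A>0:\text{every radial }u\text{ with }\sup_{t\in[0,T_+)}\|(u,\partial_t u)\|_{\dot H^{s_p}\times\dot H^{s_p-1}}\le A\text{ scatters}\right\},
\]
which is finite by the contradiction hypothesis. A Bahouri--G\'erard profile decomposition for bounded sequences in the radial subspace of $\dot H^{s_p}\times\dot H^{s_p-1}$, together with a nonlinear perturbation lemma, then produces a non-scattering critical element $u_c$ of threshold size whose rescaled orbit
\[
K:=\left\{\Bigl(\lambda(t)^{-2/(p-1)}u_c(x/\lambda(t),t),\,\lambda(t)^{-2/(p-1)-1}\partial_t u_c(x/\lambda(t),t)\Bigr):t\in[0,T_+)\right\}
\]
is precompact in $\dot H^{s_p}\times\dot H^{s_p-1}$ for some scale $\lambda:[0,T_+)\to(0,\infty)$. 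The construction must account for the fact that (CP1) is neither scale- nor translation-invariant: a profile concentrating at spatial scale tending to $0$ evolves, after normalization to unit scale, under the pure-power equation (CP0) with constant coefficient $\phi(0)$; a profile dispersing at spatial scale tending to $\infty$ evolves under (CP0) with constant coefficient $\phi(\infty)$; and a profile at bounded scale evolves under (CP1) itself. Radiality of both the data and of $\phi$ removes translation freedom and pins all concentration points at the origin.

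\textbf{Rigidity: killing the critical element.} Once $u_c$ is in hand, $\lambda(t)$ is classified following \cite{shen2}. Finite speed of propagation applied to the compact family $K$, together with the radial channel-of-energy and Morawetz-type estimates used there, excludes the degenerate regimes in which the spatial scale of $u_c(\cdot,t)$ tends to $0$ or to $\infty$: in each such regime, an appropriate rescaling of $u_c$ produces a non-scattering critical element for a pure-power equation (CP0) with constant coefficient $\zeta=\phi(0)$ or $\phi(\infty)$; the further rescaling $v=|\zeta|^{1/(p-1)}u$ reduces $\zeta$ to an element of $\{-1,0,+1\}$, and Theorem \ref{theorem with phi pm 1} (the case $\zeta=0$ being linear and hence scattering) rules this out. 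One is then reduced to the ``soliton'' regime, in which $\lambda(t)$ is bounded above and below; the compactness of $K$ combined with the double Duhamel argument forces $u_c$ to be time-independent, so $W_0:=u_c(\cdot,0)$ is a nonzero radial distributional solution of $-\Delta W_0=\phi(x)|W_0|^{p-1}W_0$. Elliptic regularity and the radial Sobolev embedding upgrade $W_0$ to $C^2(\Rm^3)\cap\dot H^{s_p}(\Rm^3)$, contradicting the standing assumption and completing the proof.

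\textbf{Main obstacle.} The chief technical difficulty is the profile decomposition together with its nonlinear perturbation lemma for an equation that is neither scale- nor translation-invariant. The nonlinear profile attached to each linear profile does not solve (CP1) itself but one of three limit equations depending on the scale parameter; showing that the sum of these nonlinear profiles approximates the true nonlinear evolution uniformly in the Strichartz norm driving the local theory requires a careful dichotomy on the scale, exploiting both the uniform continuity of $\phi$ on bounded sets and the decay $\phi(x)\to\phi(\infty)$ as $|x|\to\infty$. Once this machinery is in place, the scale classification and the extraction of $W_0$ are reasonably routine adaptations of the constant-coefficient analysis carried out in \cite{shen2}.
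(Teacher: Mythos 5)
Your proposal correctly reproduces the Kenig--Merle concentration-compactness skeleton and, crucially, correctly identifies the feature specific to (CP1): nonlinear profiles concentrating at scale $0$ (resp.\ dispersing to scale $\infty$) should be taken to solve the constant-coefficient equation with $\phi(0)$ (resp.\ $\phi(\infty)$), while only profiles at unit scale solve (CP1) itself, and Proposition \ref{theorem with constant phi} --- obtained from Theorem \ref{theorem with phi pm 1} via the rescaling $v = |\zeta|^{1/(p-1)} u$ --- removes the constant-coefficient cases. One organizational deviation is that you retain a modulation parameter $\lambda(t)$ in the compactness statement and then ``classify'' it. The paper instead shows, during the compactness step itself, that the single surviving profile must have scale limit $\lambda_1 = 1$ (see the paragraph ``A solution to (CP1)'' in Section 4), so the critical element's orbit is precompact \emph{without} any modulation. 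For (CP1) this is essentially forced: a genuinely $\lambda(t)$-modulated critical element would not itself be a solution of (CP1), since the coefficient $\phi$ is not scale-invariant.

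The genuine gap is in the rigidity step. You assert that ``the compactness of $K$ combined with the double Duhamel argument forces $u_c$ to be time-independent'' and that ``elliptic regularity and the radial Sobolev embedding upgrade $W_0$ to $C^2(\Rm^3)\cap\dot H^{s_p}(\Rm^3)$.'' Neither claim is justified, and the paper's actual rigidity is substantially different and more involved. It proceeds in three stages: first it establishes extra regularity $\dot H^1\times L^2$ away from the origin together with the far-field asymptotics $u(x,t)=A/|x|+O(|x|^{-(p-2)})$ with $A$ independent of $t$ (Proposition \ref{enery channel 2}); second it constructs, by an ODE shooting argument on $(R(\phi,A),\infty)$, a stationary solution $W_A$ of $-\Delta W=\phi|W|^{p-1}W$ with exactly those asymptotics (Proposition \ref{existence of the ground state}); and only then it invokes the channel-of-energy machinery, first to force $u(x,t)\equiv W_A(x)$ for all large $|x|$ uniformly in $t$ (Theorem \ref{identity near infinity}) and then to propagate that identity inward via the rigid linear growth of the essential radius of support (Theorem \ref{behaviour of compact solution}). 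A double Duhamel pairing does not substitute for this: it is not known to yield time-independence for subcritical wave equations. The elliptic-regularity claim is also too quick: radial Sobolev only gives $|W_0(x)|\lesssim|x|^{-2/(p-1)}$ near the origin, hence $|W_0|^p\lesssim |x|^{-2p/(p-1)}$, which fails to lie in $L^q_{\mathrm{loc}}$ for any $q>3/2$ when $3<p<5$, so standard $W^{2,q}$ theory does not directly produce $C^2$ at the origin. The paper needs the strictly better $o(|x|^{-2/(p-1)})$ decay coming from orbit precompactness (Lemma \ref{uniform decay for compact set}) and a dedicated ODE bootstrap (Lemma \ref{almost local boundedness} and Proposition \ref{C2 smooth at the origin}) to close this.
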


\begin{remark}
 A similar result holds for the negative time direction as well, because the wave equation is time-reversible.
\end{remark}

\begin{remark}
 This is clear that if the elliptic equation does admit a radial $C^2$ solution $W_0 (x)$ in $\dot{H}^{s_p} (\Rm^3)$, then $u(x,t) = W_0 (x)$ is a solution to the wave equation (CP1) independent of tine $t$. Its critical Sobolev norm remains the same for all time but it definitely does not scatter. Therefore the condition about the elliptic equation in Theorem \ref{main theorem work} is not only a sufficient condition but also a necessary one. The solutions to the elliptic equation are usually called ground states.
\end{remark}

\begin{remark} \label{three types of solutions}
 In general, the global behaviour of a solution to a non-linear wave equation as $t \rightarrow T_+$ may be one of the following three cases
 \begin{itemize}
  \item[(I)] The solution scatters, i.e. it resembles the behaviour of a free wave\footnote{A free wave is a solution to the homogenous linear wave equation $\partial_t^2 u - \Delta u = 0$}. More precisely, $T_+ = +\infty$ and there exists a pair $(u_0^+, u_1^+) \in \dot{H}^{s_p} \times \dot{H}^{s_p-1} (\Rm^3)$, such that
  \[
   \lim_{t \rightarrow +\infty} \left\|\begin{pmatrix} u(t)\\ \partial_t u(t)\end{pmatrix} - {\mathbf S}_L(t) \begin{pmatrix} u_0^+\\ u_1^+ \end{pmatrix} \right\|_{\dot{H}^{s_p} \times \dot{H}^{s_p-1} (\Rm^3)} = 0.
  \]
  here ${\mathbf S}_L (t)$ is the linear wave propagation operator as in Definition \ref{wave propagation operator}.
  \item[(II)] The critical Sobolev norm of the solution is unbounded.
  \[
   \limsup_{t \rightarrow T_+} \left\|(u(\cdot, t), \partial_t u(\cdot, t))\right\|_{\dot{H}^{s_p} \times \dot{H}^{s_p-1} (\Rm^3)} = + \infty
  \]
  \item[(III)] The critical Sobolev norm of the solution is bounded but the solution does not scatter. One typical example is a ground state as mentioned above, if it exists.
 \end{itemize}
 Our main theorem claims that the case (III) is possible only if there is a perfect ground state.
\end{remark}

\subsection{Main Idea}

As in the special case $\phi(x) \equiv \pm 1$, the idea for the proof of the main theorem is to apply the compactness-rigidity argument (see also \cite{kenig, kenig1}). We start by giving a brief description about the compactness part of the argument.
\subsubsection{Compactness}
First of all, it suffices to verify that the statement $Sc(A)$ below is true for all $A > 0$, whenever an $\dot{H}^{s_p}$ ground state does not exist, in order to prove the main theorem.
\begin{statement} [$Sc(A)$]
If $u(x,t)$ is a radial solution of the non-linear wave equation (CP1) with a maximal lifespan $(-T_-, T_+)$, so that
\[
 \sup_{t \in [0,T_+)} \|(u(t), \partial_t u(t))\|_{\dot{H}^{s_p} \times \dot{H}^{s_p-1}(\Rm^3)} < A,
\]
then $T_+ = \infty$ and the solution scatters in the positive time direction.
\end{statement}

By the local theory given in section \ref{sec: local theory}, we know that $Sc(A)$ holds for small $A>0$. Our goal is to show $SC(A)$ holds for all $A>0$. If this were false, there would be a positive real number $M>0$, called the break-down point, so that $Sc(A)$ holds for all $A\leq M$ but fails for all $A>M$. As a result, we can pick up a sequence of non-scattering solutions $\{u_n\}_{n\in {\mathbb Z}^+}$, so that
\[
 \sup_{t \in [0,T_{n}^+)} \|(u_n (\cdot, t), \partial_t u_n (\cdot, t))\|_{\dot{H}^{s_p} \times \dot{H}^{s_p-1}} \searrow M.
\]
Here the notation $T_n^+$ represents the right-hand endpoint of the lifespan of $u_n$. The core of the compactness part is a limiting process: Possibly passing to a subsequence, we take a limit of the solutions $u_n$ mentioned above and finally obtain a ``critical element'' $u$, which is a solution to (CP1) defined for all $t \in \Rm$ and satisfies
\begin{itemize}
 \item $\displaystyle \sup_{t \in \Rm} \|(u (\cdot, t), \partial_t u (\cdot, t))\|_{\dot{H}^{s_p} \times \dot{H}^{s_p-1}(\Rm^3)} = M$.
 \item The set $\{(u(\cdot,t), \partial_t u(\cdot,t))| t\in \Rm \}$ is pre-compact in the space $\dot{H}^{s_p} \times \dot{H}^{s_p-1}(\Rm^3)$.
\end{itemize}
Among the key gradients of the compactness procedure are the profile decomposition and non-linear profiles associated to it.

\paragraph{The profile decomposition} Given a sequence of radial initial data $\{(u_{0,n}, u_{1,n})\}_{n \in {\mathbb Z}^+}$ which are uniformly bounded in the space $\dot{H}^{s_p} \times \dot{H}^{s_p-1} (\Rm^3)$, we can always find a subsequence of it, still denoted by $\{(u_{0,n}, u_{1,n})\}_{n \in {\mathbb Z}^+}$, a sequence of radial free waves, denoted by $\{V_j (x,t)\}_{j \in {\mathbb Z}^+}$, and a pair $(\lambda_{j,n}, t_{j,n}) \in \Rm^+ \times \Rm$ for each $(j,n) \in {\mathbb Z}^+ \times {\mathbb Z}^+$, such that
\begin{itemize}
 \item Given an integer $J>0$, we can write each pair of initial data in the subsequence into a sum of $J$ major components plus an error term:
\[
 (u_{0,n}, u_{1,n}) = \sum_{j=1}^J \left(V_{j,n}(\cdot, 0), \partial_t V_{j,n}(\cdot, 0)\right) + (w_{0,n}^J, w_{0,n}^J).
\]
Here $V_{j,n}$ is a modified version of $V_j$ via the application of a dilation and a time translation:
\[
 \left(V_{j,n}(x, t), \partial_t V_{j,n}(x, t) \right)= \left(\frac{1}{\lambda^{\frac{2}{p-1}}} V_j \left(\frac{x}{\lambda_{j,n}}, \frac{t-t_{j,n}}{\lambda_{j,n}}\right), \frac{1}{\lambda^{\frac{2}{p-1}+1}} \partial_t V_j \left(\frac{x}{\lambda_{j,n}}, \frac{t-t_{j,n}}{\lambda_{j,n}}\right)\right);
\]
and $(w_{0,n}^J, w_{1,n}^J)$ represents an error term that gradually becomes negligible as $J$ and $n$ grow.
 \item The sequences $\{(\lambda_{j,n}, t_{j,n})\}_{n \in {\mathbb Z}^+}$ and $\{(\lambda_{j',n}, t_{j',n})\}_{n \in {\mathbb Z}^+}$ are ``almost orthogonal'' for $j\neq j'$. More precisely we have
\[
 \lim_{n\rightarrow \infty} \left(\frac{\lambda_{j,n}}{\lambda_{j',n}} + \frac{\lambda_{j',n}}{\lambda_{j,n}} + \frac{|t_{j,n}-t_{j',n}|}{\lambda_{j,n}}\right) = +\infty.
\]
 \item We can also assume $\lambda_{j,n} \rightarrow \lambda_j \in \{0,1,\infty\}$ and $-t_{j,n}/\lambda_{j,n} \rightarrow t_j \in \Rm \cup \{\infty, -\infty\}$ as $n \rightarrow \infty$ for each fixed $j$, by possibly passing a subsequence and/or adjusting the free waves $\{V_j\}_{j\in {\mathbb Z}^+}$.
\end{itemize}
\paragraph{The nonlinear profiles} Let us first consider the case with a pure power-type nonlinearity. For each $j$ we can find a solution $U_j$ to (CP0), called a nonlinear profile, so that the function
\begin{equation} \label{def of Ujn}
 U_{j,n}(x,t) \doteq U_j \left(\frac{x}{\lambda_{j,n}}, \frac{t-t_{j,n}}{\lambda_{j,n}}\right)
\end{equation}
serves as a more and more accurate approximation of the solution to (CP0) with initial data $\left(V_{j,n}(\cdot, 0), \partial_t V_{j,n}(\cdot, 0)\right)$ when $n \rightarrow \infty$. We then add up these approximations for all $j\in {\mathbb Z}^+$ and finally obtain an approximation of $u_n$, thanks to the almost orthogonality. The fact that the equation (CP0) is invariant under dilations and time translations plays a crucial role in this argument. The same argument no longer works for the equation (CP1), since the presence of $\phi(x)$ prevents the application of dilations in this purpose. However, this difficulty can be overcome if we use nonlinear profiles that are not necessarily solutions to (CP1) but possibly solutions to other related equations instead. In fact, the solution to (CP1) with initial data $\left(V_{j,n}(\cdot, 0), \partial_t V_{j,n}(\cdot, 0)\right)$ can be approximated by a nonlinear profile $U_j$ as described below, up to a dilation and a time translation as shown in \eqref{def of Ujn}.
\begin{itemize}
 \item[I] {(Expanding Profile)} If $\lambda_j= \infty$, then the profile spreads out in the space as $n \rightarrow \infty$. Eventually a given compact set won't contain any significant part of the profile. The combination of this fact and our assumption $\lim_{|x| \rightarrow \infty} \phi (x) = \phi(\infty)$ implies that the nonlinear term $\phi(x) |u|^{p -1} u$ works in a similar way as $\phi(\infty) |u|^{p-1} u$. As a result, the nonlinear profile $U_j$ in this case is  a solution to the nonlinear wave equation $\partial_t^2 u - \Delta u = \phi (\infty) |u|^{p-1} u$.
% \item[II] {(Traveling Profile)} If $\lambda_j < \infty$ but $x_j = \infty$, then the profile travels to the infinity as $n \rightarrow \infty$. Again this enables us to ignore the nonlinear term and choose a nonlinear profile from the solutions to the linear wave equation. In fact, we can make the remainder term absorb those profiles in the cases (I) and (II).
 \item[II] {(Stable Profile)} If $\lambda_j =1$, then the profile approaches a stationary scale as $n \rightarrow \infty$. In this case the nonlinear profile $U_j$ is still a solution to (CP1).
 \item[III] {(Concentrating Profile)} If $\lambda_j = 0$, then the profile concentrates around the origin as $n \rightarrow \infty$. The nonlinear term $\phi(x) |u|^{p -1} u$ performs in almost the same way as $\phi(0) |u|^{p -1} u$. Therefore we can choose the nonlinear profile $U_j$ to be a solution of the semi-linear wave equation $\partial_t^2 u - \Delta u = \phi (0) |u|^{p-1} u$.
\end{itemize}

\subsubsection{Rigidity}

In this part we need to prove the non-existence of a critical element as mentioned above unless the equation (CP1) admits a nontrivial radial $C^2$ ground state in $\dot{H}^{s_p} (\Rm^3)$, i.e. the elliptic equation $-\Delta W(x) = \phi(x) |W(x)|^{p-1} W(x)$ has a radial solution in the space $C^2(\Rm^3) \cap \dot{H}^{s_p}(\Rm^3)$. A solution to this elliptic equation can be understood as a function defined for $(x,t) \in \Rm^3 \times \Rm$, although independent of $t$, which also solves (CP1). We usually call a solution of this type a ground state. Our proof of the rigidity part is straightforward: It turns out that any critical element must be exactly a ground state as mentioned above. The argument is similar to the one we used for the equation (CP0) and consists of three steps
\begin{itemize}
 \item [(I)] We first show that the critical element $u$ must be more regular than we have assumed. More precisely, it is in the space $\dot{H}^1 \times L^2 (\Rm^3 \setminus B(0,R))$ for all $R > 0$ and its behaviour near infinity is similar to that of $A/|x|$, where $A$ is a constant independent of $t$.
 \item [(II)] We then construct a solution $W$ to the equation $-\Delta W = \phi(x) |W|^{p-1} W$, whose behaviour near infinity is similar to that of $u$. Please note that this can be done even if the equation (CP1) does not admit a nontrivial $C^2$ ground state in $\dot{H}^{s_p}$. In this case the function $W$ is either outside the space $\dot{H}^{s_p} (\Rm^3)$ when $A\neq 0$, or identically zero when $A=0$. In fact, if the equation admit a ground state in $\dot{H}^{s_p} (\Rm^3)$, it can always be constructed via our method.
 \item [(II)] By applying the ``channel of energy'' method, we show that $u$ must be exactly the same as $W$. This gives a contradiction if the equation does not admit an $\dot{H}^{s_p}$ ground state. Because in this case $u(\cdot, t)$, which is the same as $W$ for each given $t \in \Rm$, is either outside the space $\dot{H}^{s_p} (\Rm^3)$ or identically zero.
\end{itemize}

\begin{remark} 
This argument works for a solution $u$ to (CP1) as long as 
\begin{itemize}
 \item The solution $u$ is radial and defined for all $t \in \Rm$;
 \item The set $\{(u(\cdot, t), \partial_t u(\cdot,t)): t \in \Rm\}$ is pre-compact in the space $\dot{H}^{s_p} \times \dot{H}^{s_p-1}(\Rm^3)$.
\end{itemize}
Any solution satisfying these conditions must be identically a ground state as mentioned above. 
\end{remark}

\subsection{Structure of this paper}
In section 2 we introduce notations, local theory, and already-known results as a preparation for the proof of the main theorem. The compactness part of the proof comes with two sections: In section 3 we make a review on the profile decomposition, introduce non-linear profiles and prove some of their properties. Next we carry on the compactness procedure and extract a critical element in section 4. The rigidity part of proof consists of three sections: We show the additional regularity of the critical element in section 5, then consider the solutions to the elliptic equation $-\Delta W = \phi(x) |W|^{p-1} W$ in section 6, and finally finish the proof in section 7 via the ``channel of energy'' method. Please note that the argument used in section 5 and section 7 is exactly the same one as the author used in \cite{shen2} to deal with the equation (CP0). Therefore we skip most details and merely give most important statements and ideas in these two sections.

\section{Preliminary Results}

\subsection{Notations}

\begin{definition}
 Throughout this paper we use the notation $F(u) = |u|^{p-1} u$.
\end{definition}

\begin{definition}
 We define $\Di_\lambda$ to be the dilation operator
 \[
  \Di_\lambda \left(u_0(x) ,u_1(x) \right) = \left(\frac{1}{\lambda^{3/2 - s_p}} u_0\left(\frac{x}{\lambda}\right),
\frac{1}{\lambda^{5/2 - s_p}} u_1\left(\frac{x}{\lambda}\right)\right);
 \]
% and $\Di_{\lambda, x_0}$ to be operator
% \[
%  \Di_{\lambda,x_0} \left(u_0(x) ,u_1(x) \right) = \left(\frac{1}{\lambda^{3/2 - s_p}} u_0 \left(\frac{x-x_0}{\lambda}\right),
%\frac{1}{\lambda^{5/2 - s_p}} u_1\left(\frac{x-x_0}{\lambda}\right)\right);
 %\]
 Here $x$ is the spatial variable of functions.
\end{definition}

\begin{definition} \label{wave propagation operator}
 Let ${\mathbf S}_L (t)$ be the linear wave propagation operator. More precisely, if $u$ is the solution to linear wave equation $\partial_t^2 u - \Delta u = 0$ with initial data $(u, \partial_t u)|_{t=0} = (u_0,u_1)$, then we define
\begin{align*}
 &{\mathbf S}_L (t_0) (u_0,u_1) = \left(u(\cdot, t_0), u_t (\cdot,t_0)\right),& &{\mathbf S}_L (t_0) \begin{pmatrix} u_0\\ u_1
 \end{pmatrix} =
 \begin{pmatrix} u(\cdot, t_0)\\ u_t (\cdot, t_0) \end{pmatrix}.&
\end{align*}
In addition, we use the notation $\mathbf{S}_{L,0} (u_0,u_1)$ for the first component $u(t_0)$ of the vector above.
%Similarly $S_{1} (t)$ and $S_{\phi} (t)$ represents the nonlinear propagation operator with nonlinearity $F(u)$ and $\phi F(u)$, respectively.
\end{definition}
\begin{definition} Let $I$ be a time interval and $1 \leq q,r <\infty$. We define the space-time norm in the following way
\[
 \|u(x,t)\|_{L^q L^r (I \times \Rm^3)} = \left(\int_I \left(\int_{\Rm^3} |u(x,t)|^r\, dx \right)^{q/r}\,dt\right)^{1/q}.
\]
Similarly we have
\[
 \|u(x,t)\|_{L^\infty L^r (I \times \Rm^3)} = \esssup_{t \in I} \left(\int_{\Rm^3} |u(x,t)|^r\, dx\right)^{1/r}.
\]
\end{definition}

\subsection{Local Theory} \label{sec: local theory}

We start by the Strichartz estimates, as they are the basis of our local theory.

\begin{proposition} [Generalized Strichartz Inequalities] \label{strichartz}
(Please see Proposition 3.1 of \cite{strichartz}, here we use the Sobolev version in $\Rm^3$)
Let $2 \leq q_1,q_2 \leq \infty$, $2 \leq r_1, r_2 < \infty$ and $\rho_1, \rho_2, s \in \Rm$ with
\begin{align*}
 1/{q_i} + 1/{r_i} &\leq 1/2; \quad i=1,2;\\
 1/{q_1} + 3/{r_1} &= 3/2 - s + \rho_1;\\
 1/{q_2} + 3/{r_2} &= 1/2 + s + \rho_2.
\end{align*}
If $u$ is the solution of the following linear wave equation
\begin{equation} \label{linear wave equation}
\left\{\begin{array}{l} \partial_t^2 u - \Delta u = F (x,t), \,\,\,\,\, (x,t)\in \Rm^3 \times \Rm;\\
u |_{t=0} = u_0 \in \dot{H}^s (\Rm^3);\\
\partial_t u |_{t=0} = u_1 \in \dot{H}^{s-1}(\Rm^3);\end{array}\right.
\end{equation}
then for any time interval $I$ containing zero we have
\begin{align*}
 &\sup_{t \in I} \|(u(\cdot, t), \partial_t u(\cdot, t))\|_{\displaystyle \dot{H}^{s} \times \dot{H}^{s-1}(\Rm^3)} +
 \|D_x^{\rho_1} u\|_{\displaystyle L^{q_1} L^{r_1} (I\times \Rm^3)}\\
 &\quad\leq C \left( \|(u_0,u_1)\|_{\displaystyle \dot{H}^{s} \times \dot{H}^{s-1}(\Rm^3)} +
 \|D_x^{-\rho_2} F(x,t)\|_{\displaystyle L^{\bar{q}_2} L^{\bar{r}_2}(I\times \Rm^3)} \right).
\end{align*}
The constant $C$ does not depend on the time interval $I$.
\end{proposition}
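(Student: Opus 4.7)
This is a standard Strichartz-type estimate whose proof follows the three-step framework of dispersive estimate $\to$ $TT^*$ duality $\to$ Littlewood-Paley summation, as developed by Ginibre-Velo and, at the endpoint, by Keel-Tao. For a self-contained argument I would proceed as follows.

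\emph{Reduction to $L^2$-level data.} Using the identification $\|f\|_{\dot H^s(\Rm^3)} = \|D_x^s f\|_{L^2(\Rm^3)}$, I rewrite the estimate so that both the initial data and the forcing live in $L^2$-based spaces; once the fractional derivatives $D_x^{\rho_1}$ and $D_x^{-\rho_2}$ are absorbed on each side, the two scaling identities in the hypothesis reduce to the usual wave-admissibility relations $1/q_i+3/r_i = 3/2$ for an adjusted pair of exponents. It therefore suffices to prove, for each wave-admissible pair $(q,r)$ in $\Rm^3$, the homogeneous estimate
\[
 \|e^{\pm it\sqrt{-\Delta}} f\|_{L^q L^r(\Rm \times \Rm^3)} \leq C\|f\|_{L^2(\Rm^3)},
\]
together with its inhomogeneous Duhamel counterpart.

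\emph{Dispersive estimate and $TT^*$.} Frequency-localize to a dyadic shell $|\xi|\sim N$ via a Littlewood-Paley projector $P_N$ and invoke the classical stationary-phase bound
\[
 \|P_N e^{\pm it\sqrt{-\Delta}} f\|_{L^\infty(\Rm^3)} \leq C\, N \, |t|^{-1} \|P_N f\|_{L^1(\Rm^3)}.
\]
Interpolating with the trivial $L^2\to L^2$ conservation yields an $L^{r'}\to L^r$ decay estimate at rate $|t|^{-(1-2/r)}$ (times an $N$-dependent factor) for $2\le r<\infty$. By the $TT^*$ identity, the homogeneous Strichartz bound is equivalent to a bilinear space-time inequality whose kernel, after applying the pointwise-in-$t$ decay, is controlled by Hardy-Littlewood-Sobolev in the time variable under exactly the admissibility condition $1/q+1/r \le 1/2$. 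Summing in $N$ through a Littlewood-Paley square function gives the frequency-aggregated estimate, and rescaling removes the $N$-dependence. The inhomogeneous term is then split into the non-retarded piece, which falls out of the homogeneous bound by duality, and the retarded piece, which off the endpoint is handled by the Christ-Kiselev lemma.

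\emph{The main obstacle.} The hard point is the sharp endpoint $1/q+1/r = 1/2$: Hardy-Littlewood-Sobolev degenerates at the borderline, and Christ-Kiselev no longer delivers the retarded inhomogeneous estimate. Overcoming this requires the bilinear $TT^*$ scheme of Keel-Tao, which replaces the soft time-interpolation by an atomic decomposition on the Lorentz space $L^{2,1}$ together with an almost-orthogonal dyadic decomposition of the time-difference kernel. Once the endpoint is established, the full range of exponents in the statement follows from real interpolation, and the derivative factors $D_x^{\rho_1}$ and $D_x^{-\rho_2}$ are reinstated at the end to recover the form given.
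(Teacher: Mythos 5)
The paper offers no proof of Proposition \ref{strichartz}: it is quoted verbatim from Proposition~3.1 of Ginibre--Velo \cite{strichartz}, so there is no internal argument to compare yours against. Taking your sketch on its own terms, the overall scaffold (frequency-localized dispersive estimate $\to$ $TT^*$ with Hardy--Littlewood--Sobolev $\to$ Littlewood--Paley summation $\to$ Duhamel and Christ--Kiselev for the retarded term) is the standard route to Strichartz estimates and, modulo the substantial amount that is delegated to references, would in outline produce a proof.

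Two specific points need fixing, however. First, the fixed-time bound should read $\|P_N e^{\pm it\sqrt{-\Delta}}f\|_{L^\infty(\Rm^3)} \lesssim N^2 |t|^{-1}\|P_N f\|_{L^1(\Rm^3)}$: the dyadic prefactor in $\Rm^d$ is $N^{(d+1)/2}$, which is $N^2$ for $d=3$, not $N$. With the wrong power, the subsequent bookkeeping of derivatives and the Littlewood--Paley summation will not close. Second, your discussion of the endpoint is misdirected. In $\Rm^3$ the wave endpoint in the Keel--Tao sense is $(q,r)=(2,\infty)$, which is precisely the forbidden case $(\sigma,q,r)=(1,2,\infty)$ of the Keel--Tao theorem; the endpoint estimate is in fact known to fail there, and Keel--Tao cannot rescue it. The hypotheses of the proposition exclude that corner outright, since $r<\infty$ together with $1/q+1/r\le 1/2$ forces $q>2$, and for $q>2$ the borderline HLS inequality already suffices and the Christ--Kiselev lemma applies. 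So the ``main obstacle'' you identify does not arise here; presenting the endpoint machinery as the crux of the proof is the wrong emphasis. Finally, the ``reduction to $L^2$-level data'' step should be stated more carefully: the exponents $(q_i,r_i)$ are fixed and cannot literally be ``adjusted''; what the two scaling identities together with $1/q_i+1/r_i\le 1/2$ actually buy is that any non-sharp pair is reached from a sharp admissible pair by a Sobolev embedding in $x$, and the net derivative count $s-\rho_1$ (resp.\ $s+\rho_2$) is exactly what balances the scaling. Making this explicit is what Ginibre--Velo do, and it is the part of the argument that justifies the seemingly free parameters $\rho_1,\rho_2,s$.
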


\begin{definition} Let $I$ be a time interval. We define the following norms
\begin{align*}
 \|u(x,t)\|_{Y(I)} &= \|u(x,t)\|_{L^\frac{2p}{1+s_p} L^\frac{2p}{2-s_p}(I\times \Rm^3)};\\
 %\|u(x,t)\|_{Y^\star (I)} &= \|\phi^{1/p} u(x,t)\|_{L^\frac{2p}{1+s_p} L^\frac{2p}{2-s_p}(I\times \Rm^3)}\\
 \|v(x,t)\|_{Z(I)} &= \|u(x,t)\|_{L^\frac{2}{1+s_p} L^\frac{2}{2-s_p}(I\times \Rm^3)}.
\end{align*}
\end{definition}

\begin{definition}
It may be necessary to use the notation
\[
\|(u_0,u_1)\|_H = \|(u_0,u_1)\|_{\dot{H}^{s_p} \times \dot{H}^{s_p -1}(\Rm^3)}
\]
in order to save space. If $V$ is a free wave, then the norm $\|(V(\cdot,t), \partial_t V(\cdot,t)\|_H$ is independent of $t$. Thus we may use the notation $\|V\|_H$ instead for simplicity.
\end{definition}

\paragraph{The fixed-point argument} If $u$ is a solution to \eqref{linear wave equation} on a time interval $I$ containing $0$, then we have the Strichartz estimates
\[
 \sup_{t \in I} \|(u(t), \partial_t u(t))\|_{H} + \|u\|_{Y(I)} \leq C \left[\|(u_0,u_1)\|_H + \|F\|_{Z(I)}\right].
\]
Combining this with the inequalities
\begin{align*}
 \|\phi F(u)\|_{Z(I)} & \leq \|u\|_{Y(I)}^p;\\
 \|\phi F(u_1) - \phi F(u_2)\|_{Z(I)} & \leq C_p \|u_1 -u_2\|_{Y(I)}\left[\|u_1\|_{Y(I)}^{p-1} + \|u_2\|_{Y(I)}^{p-1}\right];
\end{align*}
and applying a fixed-point argument, we obtain a local theory as given in the rest of this subsection. Since our argument is similar to those in a lot of earlier works, we only give important statements but omit most of the proof here. Please see, for instance, \cite{kenig, ls} for more details.

\begin{definition} [Solutions]
We say $u(t)$ is a solution of (CP1) on the time interval $I$, if $(u(t),\partial_t u(t)) \in C(I;{\dot{H}^{s_p}}\times{\dot{H}^{s_p-1}}(\Rm^3))$, with a finite norm $\|u\|_{Y(J)}$ for any bounded closed interval $J \subseteq I$ so that the integral equation
\[
 u(t) = {\mathbf S}_{L,0} (t)(u_0,u_1) + \int_0^t \frac{\sin ((t-\tau)\sqrt{-\Delta})}{\sqrt{-\Delta}} F(u(\tau)) d\tau
\]
holds for all time $t \in I$.
\end{definition}

\begin{theorem} [Local solution]
For any initial data $(u_0,u_1) \in \dot{H}^{s_p} \times \dot{H}^{s_p-1}$, there is a maximal interval $(-T_{-}(u_0,u_1), T_{+}(u_0,u_1))$
in which the equation has a unique solution.
\end{theorem}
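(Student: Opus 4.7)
The plan is a standard Picard iteration built on the Strichartz estimate of Proposition~\ref{strichartz}. Set
\[
 \Phi(u)(t) = \mathbf{S}_{L,0}(t)(u_0,u_1) + \int_0^t \frac{\sin((t-\tau)\sqrt{-\Delta})}{\sqrt{-\Delta}}\, \phi(x) F(u(\tau))\, d\tau,
\]
and look for a fixed point of $\Phi$ in a closed ball of $Y(I)$ for a suitable short interval $I = [-\delta,\delta]$ containing $0$.

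First I would observe that $\mathbf{S}_{L,0}(\cdot)(u_0,u_1) \in Y(\Rm)$ by the global Strichartz estimate, so dominated convergence gives $\|\mathbf{S}_{L,0}(\cdot)(u_0,u_1)\|_{Y(I)} \to 0$ as $|I| \to 0$. Choose $\delta > 0$ small enough that this quantity is bounded by a fixed small $\eta$, and work on the ball $B = \{u \in Y(I): \|u\|_{Y(I)} \leq 2C\eta\}$. The Strichartz bound, combined with the estimate $\|\phi F(u)\|_{Z(I)} \leq \|u\|_{Y(I)}^p$ recalled in the excerpt, yields
\[
 \|\Phi(u)\|_{Y(I)} \leq C\eta + C(2C\eta)^p \leq 2C\eta
\]
once $\eta$ is small depending only on $C$ and $p$. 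The companion difference estimate on $\phi F(u_1) - \phi F(u_2)$ delivers a contraction factor bounded by $CC_p(2C\eta)^{p-1}$, hence a contraction on $B$ after shrinking $\eta$ further. Banach's fixed-point theorem then produces a unique fixed point in $B$, and the same Strichartz inequality applied to $\Phi(u)$ also gives the continuity $(u,\partial_t u) \in C(I; \dot{H}^{s_p} \times \dot{H}^{s_p-1}(\Rm^3))$.

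For the maximal interval I would define $T_+$ as the supremum of all $T>0$ admitting a solution on $[0,T]$ in the sense of the definition, and symmetrically for $T_-$. Solutions obtained on overlapping intervals agree on their intersection by the contraction argument applied locally (subdividing into pieces on which the $Y$-norm is small), so one can glue them to a single solution on $(-T_-, T_+)$. Iterating the local construction from points close to the endpoints extends the lifespan whenever the $Y$-norm stays finite; if $T_+ < \infty$, then necessarily $\|u\|_{Y([0,T_+))} = \infty$, giving the standard blow-up criterion.

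The main obstacle is genuine uniqueness in the full class of solutions, not merely in the small ball $B$: two solutions $u_1, u_2$ sharing initial data need not have small $Y$-norms a priori. I would resolve this by partitioning any bounded closed subinterval $J \subseteq I$ into finitely many pieces on each of which both $\|u_1\|_{Y}$ and $\|u_2\|_{Y}$ are smaller than a fixed threshold (possible because $u_j \in Y(J)$ by assumption), then applying the Strichartz-based difference estimate successively on each piece and absorbing the small nonlinear factor into the left-hand side. This bootstrapping from local ball uniqueness to interval-wide uniqueness is the only step where one must argue beyond a direct quotation of the contraction mapping theorem.
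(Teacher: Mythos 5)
The paper deliberately omits the proof of this theorem, referring to \cite{kenig, ls} for the standard fixed-point argument; your proposal is precisely that argument, built on the Strichartz estimate of Proposition~\ref{strichartz} and the displayed nonlinear bounds on $\phi F(u)$. The small-ball contraction in $Y(I)$, recovery of $C(I;\dot{H}^{s_p}\times\dot{H}^{s_p-1})$ regularity from the Strichartz inequality, the definition of the maximal interval by gluing, the blow-up criterion, and especially the interval-subdivision argument for unconditional uniqueness are all correctly identified and go through.
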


\begin{theorem} [Scattering with small data] \label{Scattering with small data}
There exists $\delta = \delta(p) > 0$ such that if the norm of the initial data $\|(u_0,u_1)\|_{\dot{H}^{s_p} \times \dot{H}^{s_p-1}} < \delta$, then the Cauchy problem (CP1) has a global-in-time solution $u$ with $\|u\|_{Y(-\infty,+\infty)} \leq C_p \|(u_0,u_1)\|_{\dot{H}^{s_p} \times \dot{H}^{s_p-1}}$. Here both the constants $\delta(p)$ and $C_p$ can be chosen independent of the coefficient function $|\phi(x)| \leq 1$.
\end{theorem}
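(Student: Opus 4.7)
The plan is to run a standard Picard iteration on the Duhamel formulation using the Strichartz bounds from Proposition~\ref{strichartz}. Define the map
\[
 \Phi(u)(t) = {\mathbf S}_{L,0}(t)(u_0,u_1) + \int_0^t \frac{\sin\bigl((t-\tau)\sqrt{-\Delta}\bigr)}{\sqrt{-\Delta}} \phi(x) F(u(\tau))\,d\tau,
\]
and look for a fixed point in the closed ball $B_\eta = \{u : \|u\|_{Y(\Rm)} \leq \eta\}$ equipped with the distance $d(u,v) = \|u-v\|_{Y(\Rm)}$, for a suitably small $\eta>0$ to be determined.

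First, I would apply Proposition~\ref{strichartz} with $s = s_p$ and the admissible pair defining $Y$ to the free evolution to obtain
\[
 \bigl\|{\mathbf S}_{L,0}(\cdot)(u_0,u_1)\bigr\|_{Y(\Rm)} \leq C_0 \|(u_0,u_1)\|_H,
\]
where the Strichartz constant $C_0$ depends only on $p$. Next, since $|\phi(x)| \leq 1$ pointwise, the two nonlinear bounds
\[
 \|\phi F(u)\|_{Z(\Rm)} \leq \|u\|_{Y(\Rm)}^p, \qquad \|\phi F(u) - \phi F(v)\|_{Z(\Rm)} \leq C_p \|u-v\|_{Y(\Rm)}\bigl(\|u\|_{Y(\Rm)}^{p-1} + \|v\|_{Y(\Rm)}^{p-1}\bigr)
\]
recorded just before the theorem statement hold with constants independent of $\phi$. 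Combining these with Strichartz applied to the inhomogeneous term gives
\[
 \|\Phi(u)\|_{Y(\Rm)} \leq C_0\|(u_0,u_1)\|_H + C_0 \eta^p, \qquad \|\Phi(u)-\Phi(v)\|_{Y(\Rm)} \leq 2 C_0 C_p \eta^{p-1}\|u-v\|_{Y(\Rm)}
\]
for all $u,v \in B_\eta$.

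Choosing $\eta = 2C_0 \delta$ and then picking $\delta=\delta(p)>0$ so small that $C_0 \eta^{p-1} \leq 1/2$ and $2C_0 C_p \eta^{p-1} \leq 1/2$, we see that $\Phi$ sends $B_\eta$ into itself and is a $\tfrac12$-contraction there. The Banach fixed-point theorem then produces a unique $u \in B_\eta$ solving $\Phi(u)=u$, i.e.\ a solution of (CP1) on all of $\Rm$ satisfying $\|u\|_{Y(\Rm)} \leq 2C_0 \|(u_0,u_1)\|_H$, which is the desired bound with $C_p$ redefined as $2C_0$. Globalness together with finiteness of the $Y(\Rm)$ norm implies scattering by applying Strichartz to the Duhamel integral on $[T,+\infty)$ and letting $T \to +\infty$ to produce the asymptotic free data. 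The key structural observation, which I would highlight, is that the Strichartz constants are universal and the nonlinear estimates only use the bound $|\phi|\leq 1$, so both $\delta$ and $C_p$ depend solely on $p$. There is no real obstacle beyond checking admissibility of the Strichartz exponents defining $Y$ and its dual counterpart $Z$, which is routine verification of the conditions listed in Proposition~\ref{strichartz}.
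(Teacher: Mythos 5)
Your proposal is the standard contraction-mapping proof via Strichartz estimates, which is exactly what the paper has in mind: it explicitly says that ``combining this with the inequalities \ldots and applying a fixed-point argument, we obtain a local theory'' and then cites \cite{kenig, ls} rather than writing out the details. Your bookkeeping (choice of $\eta = 2C_0\delta$, smallness of $C_0\eta^{p-1}$ and $2C_0C_p\eta^{p-1}$, the observation that $|\phi|\le 1$ makes all constants independent of $\phi$) is correct, so the argument matches the paper's intended proof.
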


\begin{corollary} \label{small data preserve}
 There exists a function $\eta: \Rm^+ \rightarrow \Rm^+$, such that if $\|(u_0,u_1)\|_{\dot{H}^{s_p} \times \dot{H}^{s_p-1}} \geq C_1 > 0$, then the solution $u$ to (CP1) with the initial data $(u_0,u_1)$ satisfies
 \[
  \inf_{t \in (-T_-, T_+)} \|(u(\cdot,t),\partial_t u(\cdot, t)\|_{\dot{H}^{s_p} \times \dot{H}^{s_p-1}(\Rm^3)} \geq \eta(C_1),
 \]
\end{corollary}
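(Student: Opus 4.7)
The plan is to prove the contrapositive: show that whenever $\|(u(t_0), \partial_t u(t_0))\|_H$ is sufficiently small at some time $t_0 \in (-T_-, T_+)$, the entire trajectory (including the initial time $0$) is controlled by a multiple of that small norm. Concretely, let $\delta$ and $C_p$ be the constants from Theorem \ref{Scattering with small data}. If $\|(u(t_0), \partial_t u(t_0))\|_H < \delta$, I would apply Theorem \ref{Scattering with small data} with $t_0$ taken as the new initial time (this is legitimate since the nonlinearity is autonomous in $t$). This yields a global solution whose $Y$-norm is bounded by $C_p \|(u(t_0), \partial_t u(t_0))\|_H$; by uniqueness this solution must coincide with $u$, so in particular $T_\pm = +\infty$.

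Next I would feed this $Y$-bound back into the Strichartz inequality of Proposition \ref{strichartz} applied on the interval $\Rm$ with initial time $t_0$, using the nonlinear estimate $\|\phi F(u)\|_{Z(\Rm)} \leq \|u\|_{Y(\Rm)}^p$ (valid since $|\phi|\leq 1$), to obtain
\[
 \sup_{t \in \Rm} \|(u(t), \partial_t u(t))\|_H \leq C \bigl[1 + C_p^p \|(u(t_0), \partial_t u(t_0))\|_H^{p-1}\bigr]\,\|(u(t_0), \partial_t u(t_0))\|_H.
\]
Fixing a threshold $\delta' \leq \delta$ small enough that $C_p^p (\delta')^{p-1} \leq 1$, this bound specializes to $\sup_t \|(u(t), \partial_t u(t))\|_H \leq 2C\,\|(u(t_0), \partial_t u(t_0))\|_H$ whenever $\|(u(t_0), \partial_t u(t_0))\|_H < \delta'$. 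Evaluating at $t=0$ and using the hypothesis gives $C_1 \leq 2C\,\|(u(t_0), \partial_t u(t_0))\|_H$. Taking the contrapositive, any $t_0$ in the lifespan must satisfy $\|(u(t_0), \partial_t u(t_0))\|_H \geq \min\bigl(\delta',\, C_1/(2C)\bigr)$, so the choice $\eta(C_1) := \min\bigl(\delta',\, C_1/(2C)\bigr)$ works. There is no real obstacle here beyond carefully verifying that the small-data theorem can be applied at an arbitrary time $t_0$ inside the maximal lifespan and that the resulting global solution must coincide with $u$ by the local uniqueness built into the local Cauchy theory — routine steps given the machinery already established in Section~\ref{sec: local theory}.
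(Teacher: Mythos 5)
Your argument is correct, and it is the natural proof the paper has in mind: the result is labelled a corollary of Theorem \ref{Scattering with small data}, whose proof the paper omits, and your contrapositive argument — time-translate to $t_0$, invoke small-data scattering to get a global $Y$-bound, feed that back through the Strichartz estimate to propagate smallness of the $\dot H^{s_p}\times\dot H^{s_p-1}$ norm to every time including $t=0$ — is precisely how one would fill that in. The use of time-translation invariance (the nonlinearity $\phi(x)|u|^{p-1}u$ is autonomous) together with uniqueness from the local theory, and the final choice $\eta(C_1)=\min(\delta',C_1/(2C))$, are all in order.
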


\begin{lemma} [Standard finite blow-up criterion]
If $T_{+} < \infty$, %under the uniform boundedness condition (\ref{add1})
then $\|u\|_{Y([0,T_{+}))} = \infty$.
\end{lemma}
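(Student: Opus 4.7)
The plan is to argue by contradiction: assume $T_+<\infty$ but $\|u\|_{Y([0,T_+))}=M<\infty$, and show that the solution can be extended past $T_+$, contradicting the maximality of the lifespan. The argument is entirely standard and rests on the Strichartz estimates of Proposition \ref{strichartz} together with the pointwise bounds $\|\phi F(u)\|_{Z(I)}\leq \|u\|_{Y(I)}^{p}$ and the corresponding Lipschitz bound recorded just before the statement of the local theory.

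First I would exploit the finiteness of the global $Y$-norm to partition $[0,T_+)$ into finitely many closed subintervals $0=t_0<t_1<\dots<t_N<T_+$ chosen so that $\|u\|_{Y([t_k,t_{k+1}])}<\delta_0$ on each piece, where $\delta_0$ is a small absolute constant (independent of $\phi$, by Theorem \ref{Scattering with small data}). On each such piece, viewing $u$ as a fixed point of the Duhamel map with initial data $(u(t_k),\partial_t u(t_k))$, the Strichartz inequality applied with source term $\phi F(u)$ yields
\[
\sup_{t\in[t_k,t_{k+1}]}\|(u(t),\partial_t u(t))\|_{H}\;\leq\; C\bigl(\|(u(t_k),\partial_t u(t_k))\|_{H}+\|u\|_{Y([t_k,t_{k+1}])}^{p}\bigr).
\]
Iterating over the finitely many pieces gives a uniform bound $\sup_{t\in[0,T_+)}\|(u(t),\partial_t u(t))\|_{H}\leq K<\infty$.

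Next I would upgrade this uniform bound to a genuine limit as $t\to T_+^-$. Applied on an interval $[t,t']\subset[t_N,T_+)$, the Strichartz estimate for the difference ${\mathbf S}_L(\cdot-t)(u(t),\partial_t u(t))-{\mathbf S}_L(\cdot-t')(u(t'),\partial_t u(t'))$ — or equivalently, control of the Duhamel integral $\int_{t}^{t'}\tfrac{\sin((s-\tau)\sqrt{-\Delta})}{\sqrt{-\Delta}}\phi F(u(\tau))\,d\tau$ — shows
\[
\|(u(t),\partial_t u(t))-{\mathbf S}_L(t-t')(u(t'),\partial_t u(t'))\|_{H}\;\leq\; C\,\|u\|_{Y([t,t'])}^{p},
\]
and the right-hand side tends to $0$ as $t,t'\to T_+^-$ because $\|u\|_{Y([0,T_+))}<\infty$ forces $\|u\|_{Y([t,T_+))}\to 0$. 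Since ${\mathbf S}_L(\cdot)$ is an isometry on $H$, the pair $(u(t),\partial_t u(t))$ is Cauchy in $H$ and converges to some limit $(u_0^\ast,u_1^\ast)\in H$.

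Finally, I would invoke the local existence theorem at time $T_+$ with initial data $(u_0^\ast,u_1^\ast)$ to obtain a solution $\tilde u$ on some interval $(T_+-\varepsilon, T_+ +\varepsilon)$. A standard stability/uniqueness argument (matching $\tilde u$ with $u$ on $(T_+-\varepsilon, T_+)$ via the Duhamel formulation and the Lipschitz estimate above) shows that $\tilde u$ coincides with $u$ on the overlap, hence yields a strict extension of $u$ past $T_+$, contradicting maximality. The main technical care is concentrated in the Cauchy-in-$H$ step: one must verify that the Duhamel remainder over $[t,t']$ is controlled by $\|u\|_{Y([t,t'])}^{p}$ uniformly in $t,t'$, which is where the exponent bookkeeping in Proposition \ref{strichartz} and the nonlinear estimate $\|\phi F(u)\|_{Z}\leq\|u\|_{Y}^{p}$ interact; everything else is bookkeeping.
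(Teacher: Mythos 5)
The paper does not actually supply a proof of this lemma: the local-theory subsection explicitly states that ``we only give important statements but omit most of the proof here'' and refers to \cite{kenig, ls}, so there is no in-paper argument to compare against. Your proposal is the standard blow-up criterion argument and it is essentially correct. Two small points worth tightening. First, the uniform $H$-bound obtained by iterating over finitely many subintervals is not actually needed: the Cauchy argument that follows already produces a limit $(u_0^\ast,u_1^\ast)\in H$, which gives the uniform bound near $T_+$ for free. Second, the passage from
\[
\bigl\|(u(t),\partial_t u(t))-{\mathbf S}_L(t-t')(u(t'),\partial_t u(t'))\bigr\|_H \le C\|u\|_{Y([t,t'])}^{p}
\]
to ``$(u(t),\partial_t u(t))$ is Cauchy in $H$'' is slightly elliptical: what this estimate shows directly is that the \emph{pulled-back} data $t\mapsto {\mathbf S}_L(-t)(u(t),\partial_t u(t))$ is Cauchy; one then uses strong continuity of ${\mathbf S}_L$ and the finiteness of $T_+$ to transfer this to $(u(t),\partial_t u(t))$ itself. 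With that spelled out, the final local-existence-plus-uniqueness gluing step is routine, and the argument is complete.
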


\begin{theorem} [Perturbation theory] \label{perturbation theory in Ysp}
Fix $3<p<5$. Let $M$ be a positive constant. There exists a constant $\eps_0 = \eps_0 (M, p)>0$, such that if an approximation solution $\tilde{u}$ defined on $\Rm^3 \times I$ ($0\in I$) and a pair of initial data $(u_0,u_1) \in \dot{H}^{s_p} \times \dot{H}^{s_p-1}$ satisfy
\begin{align*}
 & (\partial_t^2 - \Delta) \tilde{u} - \phi F(\tilde{u}) = e(x,t), \qquad (x,t) \in \Rm^3 \times I;\\
 &\|\tilde{u}\|_{Y(I)} < M;\qquad\qquad \|(\tilde{u}(\cdot, 0),\partial_t \tilde{u}(\cdot, 0))\|_{\dot{H}^{s_p}\times \dot{H}^{s_p-1}} < \infty;\\
 &\eps \doteq \|e(x,t)\|_{Z(I)}+ \|{\mathbf S}_{L,0} (t)(u_0-\tilde{u}(\cdot, 0),u_1 - \partial_t \tilde{u}(\cdot, 0))\|_{Y(I)} < \eps_0;
\end{align*}
then there exists a solution $u(x,t)$ of (CP1) defined in the interval $I$ with the initial data $(u_0,u_1)$ and satisfying
\[
 \|u(x,t) - \tilde{u}(x,t)\|_{Y(I)} < C(M, p) \eps;
\]
\[
 \sup_{t \in I} \left\|\begin{pmatrix} u(t)\\ \partial_t u(t)\end{pmatrix} - \begin{pmatrix} \tilde{u}(t)\\ \partial_t \tilde{u}(t)\end{pmatrix} - {\mathbf S}_L (t)\begin{pmatrix} u_0 - \tilde{u}(0)\\ u_1 -\partial_t \tilde{u}(0)\end{pmatrix}
 \right\|_{\dot{H}^{s_p} \times \dot{H}^{s_p-1}} < C(M, p)\eps.
\]
\end{theorem}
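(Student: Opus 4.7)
The plan is to run a bootstrap/contraction argument across finitely many subintervals on which $\tilde u$ has small $Y$-norm. First I split $I$ into $N = N(M, p)$ closed subintervals $I_1, \dots, I_N$ with $\|\tilde u\|_{Y(I_k)} \le \eta$ for each $k$, where $\eta = \eta(p)>0$ is a small constant to be fixed below; such a partition exists because $\|\tilde u\|_{Y(I)} < M$. Set $\gamma(t) = \mathbf{S}_{L,0}(t)(u_0 - \tilde u(0), u_1 - \partial_t \tilde u(0))$, so that by hypothesis $\|\gamma\|_{Y(I)} \le \eps$. I then look for the desired solution in the form $u = \tilde u + \gamma + w$, where $w$ satisfies the Duhamel equation
\[
 w(t) = \int_{t_{k-1}}^{t} \frac{\sin\bigl((t-\tau)\sqrt{-\Delta}\bigr)}{\sqrt{-\Delta}} \bigl[\phi F(\tilde u + \gamma + w) - \phi F(\tilde u) - e\bigr](\tau)\, d\tau
\]
on each $I_k = [t_{k-1}, t_k]$, supplemented by a linear free-wave correction that carries the accumulated error from the previous subinterval.

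On the first subinterval (where $(w,\partial_t w)(t_0)=0$), I run a contraction in the closed ball $\{w : \|w\|_{Y(I_1)} \le K\eps\}$ for some $K = K(p)$. Combining Proposition \ref{strichartz} with the stated nonlinear bound
\[
 \|\phi F(\tilde u + \gamma + w) - \phi F(\tilde u)\|_{Z(I_1)} \le C_p \|\gamma + w\|_{Y(I_1)} \bigl[\|\tilde u + \gamma + w\|_{Y(I_1)}^{p-1} + \|\tilde u\|_{Y(I_1)}^{p-1}\bigr],
\]
the smallness of $\eta$ and of $\eps$ (both depending only on $p$) yields both the self-mapping and contraction properties on this ball. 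This produces $w$ on $I_1$ with $\|w\|_{Y(I_1)} \le C\eps$, and the same Strichartz inequality controls $\|(w(t_1), \partial_t w(t_1))\|_{\dot{H}^{s_p}\times\dot{H}^{s_p-1}}$ by $C\eps$ as well.

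I then iterate on $I_2, \dots, I_N$, restarting the argument at each $t_k$ with the updated discrepancy. Each iteration inflates the smallness constant by a fixed multiplicative factor $\tilde C(p)$, so after $N$ steps the total accumulated error is at most $\tilde C(p)^N \eps$. Choosing $\eps_0 = \eps_0(M, p)$ small enough that $\tilde C(p)^N \eps_0$ still lies below the per-subinterval smallness threshold guarantees that the contraction hypothesis remains satisfied on every $I_k$. The main technical obstacle is the careful bookkeeping across subintervals: on a later subinterval the role of $\gamma$ is played by the free wave of the accumulated $(\dot{H}^{s_p} \times \dot{H}^{s_p-1})$-discrepancy at $t_{k-1}$, whose $Y(I_k)$-norm must be bounded back via Strichartz in terms of the inductively controlled initial-data discrepancy at $t_{k-1}$. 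Once these constants are tracked carefully, the final constant $C(M, p) \sim \tilde C(p)^N$ emerges, and both conclusions of the theorem follow.
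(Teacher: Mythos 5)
Your proof is correct and follows the same overall strategy as the paper: split $I$ into $N(M,p)$ subintervals on which $\tilde u$ has small $Y$-norm, establish the small-norm case, then iterate and absorb the geometrically-growing constant into the choice of $\eps_0$. The only (cosmetic) difference is that on each small subinterval the paper estimates the difference $u-\tilde u$ directly via Strichartz and a continuity argument in the endpoints of the time interval, whereas you set up an explicit contraction for the correction $w$ in the ansatz $u=\tilde u+\gamma+w$; both are standard ways of closing the same bootstrap, and your bookkeeping of the accumulated discrepancy at the partition points $t_k$ is exactly what is needed.
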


\begin{proof}
Let us first prove the perturbation theory when $M$ is sufficiently small. Let $I_1$ be the maximal lifespan of the solution $u(x,t)$ to the Cauchy problem (CP1) with the given initial data $(u_0,u_1)$ and assume $[-T_1,T_2] \subseteq I\cap I_1$. By the Strichartz estimates, we have
\begin{align*}
\|\tilde{u}-u\|_{Y ([-T_1,T_2])} &\leq \|{\mathbf S}_{L,0} (t)(u_0-\tilde{u}(0), u_1-\tilde{u}(0))\|_{Y ([-T_1,T_2])}\\
&\qquad \qquad\qquad + C_p \|e + \phi F (\tilde{u})- \phi F(u)\|_{Z([-T_1,T_2])}\\
&\leq \eps + C_p \|e\|_{Z([-T_1,T_2])} + C_p \|F(\tilde{u}) - F(u)\|_{Z([-T_1,T_2])}\\
&\leq \eps + C_p \eps + C_p \| \tilde{u}-u\|_{Y([-T_1,T_2])} \left(\|\tilde{u}\|_{Y([-T_1,T_2])}^{p-1} +
\|\tilde{u}-u\|_{Y([-T_1,T_2])}^{p-1}\right)\\
&\leq C_p \eps + C_p \|\tilde{u}-u\|_{Y ([-T_1,T_2])} \left(M^{p-1} + \|\tilde{u}-u\|_{Y([-T_1,T_2])}^{p-1}\right).
\end{align*}
Here the notation $C_p$ may represent different constants at different places but all these constants depend solely on $p$. By a continuity argument in $T_1, T_2$, there exist $M_0 = M_0 (p)>0$ and $\eps_0 = \eps_0 (p) >0$, such that if $M \leq M_0$ and $\eps < \eps_0$, we have
\[
 \|\tilde{u}-u\|_{Y ([-T_1,T_2])} \leq C_p \eps.
\]
Observing that the estimate above is independent of the time interval $[-T_1,T_2]$, we are actually able to conclude $I \subseteq I_1$ by the standard blow-up criterion and obtain
\[
 \|\tilde{u}-u\|_{Y (I)} \leq C_p \eps.
\]
In addition, by the Strichartz estimate we have
\begin{align*}
 \sup_{t \in I} & \left\|\begin{pmatrix} u(t)\\ \partial_t u(t)\end{pmatrix} - \begin{pmatrix} \tilde{u}(t)\\ \partial_t \tilde{u}(t)\end{pmatrix}
 - {\mathbf S}_L (t) \begin{pmatrix} u_0 - \tilde{u}(0)\\ u_1 -\partial_t \tilde{u}(0)\end{pmatrix} \right\|_{\dot{H}^{s_p} \times \dot{H}^{s_p-1}}\\
 &\leq C_p\| \phi F(u) - \phi F(\tilde{u}) - e\|_{Z(I)}\\
 &\leq C_p\left(\|e\|_{Z(I)} + \|F(u) - F( \tilde{u})\|_{Z(I)}\right)\\
 &\leq C_p\left[\eps + \|u -\tilde{u}\|_{Y(I)}\left(\|\tilde{u}\|_{Y(I)}^{p-1} + \|u-\tilde{u}\|_{Y (I)}^{p-1}\right)\right]\\
 &\leq C_p \eps.
\end{align*}
This finishes the proof as $M$ is sufficiently small. To deal with the general case, we can separate the time interval $I$ into finite number of subintervals $\{I_j\}$, so that $\|\tilde{u}\|_{Y(I_j)} < M_0$, and then iterate our argument above.
\end{proof}

\begin{remark} \label{lifespan lower bound for compact set}
If $K$ is a compact subset of the space $\dot{H}^{s_p} \times \dot{H}^{s_p-1}(\Rm^3)$, then there exists $T = T(K) > 0$ such that for any $(u_0,u_1) \in K$, $T_{+}(u_0,u_1) > T(K)$. This is a direct corollary of the perturbation theory.
\end{remark}

\subsection{Known Results with a Constant Coefficient}

In this subsection we make a review on the already-known results concerning radial solutions to the equation
\begin{equation} \label{equation with constant phi}
 \left\{\begin{array}{l}
  \partial_t^2 u - \Delta u = c |u|^{p-1} u; \\
  u|_{t=0} = u_0 \in \dot{H}^{s_p}  (\Rm^3); \\
  \partial_t u|_{t =0} = u_1 \in \dot{H}^{s_p-1} (\Rm^3);
 \end{array}\right.
\end{equation}
Here $c$ is a constant. The case with $c = \pm 1$, namely the equation (CP0), has been discussed in the author's previous work \cite{shen2}, whose main result has been mentioned in the introduction as Theorem \ref{theorem with phi pm 1}. If $u$ is a solution to \eqref{equation with constant phi} with $c \neq \pm1$, then $|c|^{1/(p-1)} u$ is a solution to (CP0); and vice versa. This transformation immediately gives

\begin{proposition} \label{theorem with constant phi}
Let $u$ be a radial solution to the equation \eqref{equation with constant phi} with a maximal lifespan $(-T_-, T_+)$ and a uniform boundedness condition on the critical Sobolev norm
\[
 \sup_{t \in [0, T_+)} \left\|(u(\cdot, t), \partial_t u(\cdot, t))\right\|_{\dot{H}^{s_p} \times \dot{H}^{s_p-1}(\Rm^3)} < \infty.
\]
Then $T_+ = \infty$ and $u$ scatters in the positive time direction.
\end{proposition}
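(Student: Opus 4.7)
The plan is to reduce Proposition \ref{theorem with constant phi} to Theorem \ref{theorem with phi pm 1} via the rescaling transformation already alluded to in the paragraph preceding the statement. The case $c=0$ is trivial: the equation becomes the free wave equation and $u$ is itself a free wave, hence global and scattering (with $(u_0^+,u_1^+)=(u_0,u_1)$). So I assume $c\neq 0$ for the rest of the argument.

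Set $\alpha = |c|^{1/(p-1)}$ and define $v(x,t) = \alpha\, u(x,t)$. A direct computation shows
\[
 \partial_t^2 v - \Delta v = \alpha\bigl(\partial_t^2 u - \Delta u\bigr) = \alpha\, c\, |u|^{p-1} u = \frac{c}{\alpha^{p-1}}\, |v|^{p-1} v = \operatorname{sgn}(c)\, |v|^{p-1} v,
\]
so $v$ solves (CP0) with $\zeta=\operatorname{sgn}(c)\in\{-1,+1\}$. Since $v$ is a constant multiple of $u$, the maximal lifespans coincide, $v$ is radial, and
\[
 \sup_{t\in[0,T_+)} \|(v(\cdot,t),\partial_t v(\cdot,t))\|_{\dot{H}^{s_p}\times\dot{H}^{s_p-1}} = \alpha \sup_{t\in[0,T_+)} \|(u(\cdot,t),\partial_t u(\cdot,t))\|_{\dot{H}^{s_p}\times\dot{H}^{s_p-1}} <\infty.
\]
Thus $v$ satisfies all the hypotheses of Theorem \ref{theorem with phi pm 1}, which yields $T_+=+\infty$ and the existence of $(v_0^+,v_1^+)\in\dot{H}^{s_p}\times\dot{H}^{s_p-1}$ with
\[
 \lim_{t\to+\infty} \left\|\begin{pmatrix} v(t)\\ \partial_t v(t)\end{pmatrix} - {\mathbf S}_L(t)\begin{pmatrix} v_0^+\\ v_1^+\end{pmatrix}\right\|_{\dot{H}^{s_p}\times\dot{H}^{s_p-1}}=0.
\]

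To conclude, divide through by $\alpha$. Since ${\mathbf S}_L(t)$ is linear, the pair $(u_0^+,u_1^+)\doteq (v_0^+/\alpha,\, v_1^+/\alpha)$ satisfies
\[
 \lim_{t\to+\infty} \left\|\begin{pmatrix} u(t)\\ \partial_t u(t)\end{pmatrix} - {\mathbf S}_L(t)\begin{pmatrix} u_0^+\\ u_1^+\end{pmatrix}\right\|_{\dot{H}^{s_p}\times\dot{H}^{s_p-1}}=0,
\]
so $u$ is global in positive time and scatters. There is no substantive obstacle here; the only thing to verify carefully is that the $\dot{H}^{s_p}\times\dot{H}^{s_p-1}$ norm transforms by a scalar multiple under $u\mapsto\alpha u$ (which is immediate by linearity of the norm in the function) and that scattering is preserved under this same linear transformation (which is immediate by linearity of ${\mathbf S}_L$).
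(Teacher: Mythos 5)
Your proof is correct and is precisely the rescaling argument the paper has in mind (the paragraph preceding the proposition states exactly the substitution $v = |c|^{1/(p-1)}u$ and leaves the verification to the reader). You fill in the details carefully, including the trivial case $c=0$ and the preservation of the boundedness hypothesis and of scattering under scalar multiplication, so the argument is complete and matches the paper's approach.
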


\subsection{Properties of Radial $\dot{H}^{s}$ Functions}

\begin{lemma} (Please see lemma 3.2 of \cite{km}) \label{pointwise radial Hs}
Let $1/2<s<3/2$. Any radial $\dot{H}^s (\Rm^3)$ function $u$ satisfies the inequality
\[
 \left|u(x)\right| \lesssim_s \frac{\|u\|_{\dot{H}^s (\Rm^3)}}{|x|^{\frac{3}{2} -s}}.
\]
\end{lemma}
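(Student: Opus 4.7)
The plan is to use the Fourier representation of radial functions on $\Rm^3$ combined with a weighted Cauchy--Schwarz inequality. By standard density, it suffices to treat a Schwartz radial function $u$. Then $\hat u$ is also radial, and the spherical integration identity $\int_{S^2} e^{ir\rho\cos\theta}\, d\sigma = 4\pi \sin(r\rho)/(r\rho)$ reduces the inverse Fourier transform to the one-dimensional formula
\[
u(r) = \frac{4\pi}{r}\int_0^\infty \hat u(\rho)\,\rho\, \sin(r\rho)\, d\rho.
\]

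Next I would split the integrand as $\bigl(\rho^{1+s}\hat u(\rho)\bigr)\cdot \bigl(\rho^{-s}\sin(r\rho)\bigr)$ and apply Cauchy--Schwarz. The first factor gives, via Plancherel applied to $(-\Delta)^{s/2}u$, a quantity comparable to $\|u\|_{\dot H^s(\Rm^3)}$, since $\int_0^\infty \rho^{2+2s}|\hat u(\rho)|^2 d\rho \asymp \|u\|_{\dot H^s}^2$ for radial functions. The second factor equals $\int_0^\infty \rho^{-2s}\sin^2(r\rho)\, d\rho$; the substitution $\tau = r\rho$ rewrites it as $r^{2s-1}\int_0^\infty \tau^{-2s}\sin^2\tau\, d\tau$. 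Collecting the powers of $r$ from the prefactor $1/r$ and from this scaling produces exactly the desired rate $r^{-(3/2-s)}$.

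The only real content in the argument — and the step where the hypothesis $1/2 < s < 3/2$ is essential — is the convergence of the universal integral $\int_0^\infty \tau^{-2s}\sin^2\tau\, d\tau$. Since $\sin^2\tau \sim \tau^2$ as $\tau \to 0^+$, the integrand behaves like $\tau^{2-2s}$ near the origin, which is integrable precisely when $s < 3/2$; at infinity the boundedness of $\sin^2\tau$ reduces matters to $\tau^{-2s}$, which is integrable precisely when $s > 1/2$. Thus both endpoints are handled exactly under the stated range, and the implicit constant depends only on $s$, as required. The main (and mild) obstacle is simply verifying the finiteness of this auxiliary integral, so the argument is essentially a short scaling-plus-Cauchy--Schwarz computation.
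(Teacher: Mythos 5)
Your argument is correct, and it is the standard Fourier-analytic proof of the radial Sobolev inequality (which is essentially the route taken in Lemma 3.2 of \cite{km}, the reference the paper cites for this result without reproducing a proof): reduce by density to Schwartz radial $u$, write $u(r) = \frac{c}{r}\int_0^\infty \hat u(\rho)\rho\sin(r\rho)\,d\rho$, split as $\rho^{1+s}\hat u(\rho)\cdot \rho^{-s}\sin(r\rho)$ and apply Cauchy--Schwarz, identify $\int_0^\infty \rho^{2+2s}|\hat u|^2\,d\rho \asymp \|u\|_{\dot H^s}^2$, and scale $\tau = r\rho$ so that the second factor contributes $r^{s-1/2}\bigl(\int_0^\infty \tau^{-2s}\sin^2\tau\,d\tau\bigr)^{1/2}$, with the universal integral finite exactly when $1/2<s<3/2$. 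The only small point worth spelling out is the final density step: the uniform bound together with the analogous estimate for differences shows that Schwartz approximants converge uniformly on $\{|x|\ge R\}$ for each $R>0$, so the limit is the continuous representative on $\Rm^3\setminus\{0\}$ (consistent with the remark following the lemma) and inherits the pointwise decay.
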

\begin{remark}
This actually means that a radial $\dot{H}^s$ function is uniformly continuous in $\Rm^3 \setminus B(0,R)$ if $R > 0$.
\end{remark}

\begin{lemma} \label{uniform decay for compact set}
 Let $K$ be a compact subset of $\dot{H}^{s}(\Rm^3)$, $1/2 < s < 3/2$. Then we have
\begin{align*}
 &\sup_{|x|>R, u \in K} |x|^{\frac{3}{2}-s} |u(x)| \rightarrow 0, \quad \hbox{as}\; R \rightarrow +\infty;\\
 &\sup_{|x|<r, u \in K} |x|^{\frac{3}{2}-s} |u(x)| \rightarrow 0, \quad \hbox{as}\; r \rightarrow 0^+
\end{align*}
\end{lemma}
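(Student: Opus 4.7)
The plan is to combine the pointwise bound from the preceding Lemma (pointwise radial $\dot{H}^s$) with the compactness of $K$ via a finite $\varepsilon$-net argument, after first establishing the claim for a single function by density of nice test functions.

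First I would prove the result for a single radial $u \in \dot{H}^s(\Rm^3)$. Choose $\varphi \in C_c^\infty(\Rm^3)$ radial with $\|u-\varphi\|_{\dot{H}^s} < \varepsilon/(2C)$, where $C$ is the constant from the previous lemma. For such a smooth compactly supported $\varphi$, the function $|x|^{3/2-s}|\varphi(x)|$ vanishes for $|x|$ large enough (compact support) and tends to $0$ as $|x|\to 0^+$ since $3/2-s>0$ and $\varphi$ is bounded. Hence there exist $R_0>0$ and $r_0>0$ such that $|x|^{3/2-s}|\varphi(x)| < \varepsilon/2$ whenever $|x|>R_0$ or $|x|<r_0$. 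Combining with the triangle inequality and the pointwise bound applied to $u-\varphi$, we get
\[
 |x|^{3/2-s}|u(x)| \leq |x|^{3/2-s}|\varphi(x)| + C\,\|u-\varphi\|_{\dot{H}^s} < \varepsilon
\]
on the same range of $|x|$.

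Next I would upgrade this to the uniform statement on the compact set $K$. Given $\varepsilon>0$, pick a finite $\varepsilon/(2C)$-net $\{u_1,\dots,u_N\}\subset K$. By the single-function case just proved, choose $R>0$ large so that $|x|^{3/2-s}|u_j(x)| < \varepsilon/2$ for all $|x|>R$ and all $j=1,\dots,N$, and similarly choose $r>0$ small so that the same holds for $|x|<r$. For any $u\in K$, there exists $j$ with $\|u-u_j\|_{\dot{H}^s}<\varepsilon/(2C)$, whence by the pointwise lemma
\[
 |x|^{3/2-s}|u(x)| \leq |x|^{3/2-s}|u_j(x)| + C\,\|u-u_j\|_{\dot{H}^s} < \varepsilon
\]
uniformly in $u\in K$, on $|x|>R$ (resp.\ $|x|<r$). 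Taking $\varepsilon\to 0$ gives the two claimed limits.

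The only mild subtlety is the density step: one must know that radial $C_c^\infty$ functions are dense in the radial subspace of $\dot{H}^s(\Rm^3)$ for $1/2<s<3/2$, which follows from the standard density of $C_c^\infty$ in $\dot{H}^s$ together with symmetrization (averaging over the rotation group preserves both the $\dot{H}^s$ norm and the property of being smooth and compactly supported). Everything else is just the pointwise bound of the preceding lemma combined with the triangle inequality, so I do not expect any serious obstacle.
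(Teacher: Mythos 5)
Your proposal is correct and takes essentially the same route as the paper: the paper likewise uses compactness together with the pointwise bound of Lemma \ref{pointwise radial Hs} to reduce to a single function, and then cites the Appendix of \cite{shen2} for the single-function case, which you instead re-derive via density of radial $C_c^\infty$ in radial $\dot{H}^s$ (valid here since $s<3/2=n/2$). The finite $\eps$-net argument is exactly the standard way to exploit the compactness of $K$, so there is no substantive difference.
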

\begin{proof}
 A Combination of the compactness with Lemma \ref{pointwise radial Hs} shows that it suffices to prove this lemma when $K$ contains a single element. The proof of this special case has been given in Appendix of \cite{shen2}.
\end{proof}

\section{Profile Decomposition} \label{sec: profile decomposition3}
\subsection{Linear Profile Decomposition}
\begin{theorem} [Profile Decomposition] \label{profile decomposition}
Let $A$ be a constant. Given a sequence of radial initial data $\{(u_{0,n}, u_{1,n})\}_{n \in {\mathbb Z}^+}$ so that $\|(u_{0,n}, u_{1,n})\|_H \leq A$, there exist a subsequence of it, still denoted by $(u_{0,n}, u_{1,n})$; a sequence of radial free waves $V_j(x,t) = {\mathbf S}_{L,0} (t) (v_{j,0}, v_{j,1}), j\in {\mathbb Z}^+$; a pair $(\lambda_{j,n}, t_{j,n}) \in \Rm^+ \times \Rm$ for each $(j,n) \in {\mathbb Z}^+ \times {\mathbb Z}^+$; such that
\begin{itemize}
 \item[(i)] Given a positive integer $J$, each pair of initial data in the subsequence can be expressed as a sum of the first $J$ major components plus an error term
 \begin{align*}
 (u_{0,n}, u_{1,n}) & = \sum_{j =1}^J \left(\frac{1}{\lambda_{j,n}^{3/2 - s_p}} V_j\left(\frac{\cdot}{\lambda_{j,n}}, \frac{-t_{j,n}}{\lambda_{j,n}}\right),
 \frac{1}{\lambda_{j,n}^{5/2 -s_p}} \partial_t V_j\left(\frac{\cdot}{\lambda_{j,n}}, \frac{-t_{j,n}}{\lambda_{j,n}}\right)\right) + (w_{0,n}^J, w_{1,n}^J)\\
& = \sum_{j=1}^J {\mathbf S}_L (-t_{j,n}) \Di_{\lambda_{j,n}} (v_{0,j}, v_{1,j}) +  (w_{0,n}^J, w_{1,n}^J);
\end{align*}
\item[(ii)] If $j \neq j'$, then the sequences $\{(\lambda_{j,n}, t_{j,n})\}_{n \in {\mathbb Z}^+}$ and $\{(\lambda_{j',n}, t_{j',n})\}_{n \in {\mathbb Z}^+}$ are ``almost orthogonal'', i.e. we have the limit
\[
 \lim_{n \rightarrow \infty} \left(\frac{\lambda_{j',n}}{\lambda_{j,n}} + \frac{\lambda_{j,n}}{\lambda_{j',n}} + \frac{|t_{j,n}-t_{j',n}|}{\lambda_{j,n}}\right) = +\infty.
\]
\item[(iii)] $\displaystyle \limsup_{n \rightarrow \infty} \left\|{\mathbf S}_L (t)(w_{0,n}^J, w_{1,n}^J)\right\|_{Y(\Rm)} \rightarrow 0$ as $J \rightarrow \infty$.
\item[(iv)] For each given $J \geq 1$, we have
\[
 \|(u_{0,n}, u_{1,n})\|_H^2 = \sum_{j=1}^J \|V_j\|_H^2 + \left\|(w_{0,n}^J, w_{1,n}^J)\right\|_H^2 + o_{J,n}(1).
\]
Here $o_{J,n}(1) \rightarrow 0$ as $n \rightarrow \infty$.
\item[(v)] We have the limits $\lambda_{j,n} \rightarrow \lambda_j \in \{0,1,\infty\}$ and $-t_{j,n}/\lambda_{j,n} \rightarrow t_j \in [-\infty, \infty]$ as $n \rightarrow \infty$ for each $j$.
\end{itemize}
\end{theorem}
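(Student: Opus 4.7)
The plan is to follow the classical Bahouri--Gérard scheme, extracting profiles one at a time via an inverse Strichartz argument and iterating until the remainder has arbitrarily small $Y$-norm. First I would set $(u_{0,n}^{(0)}, u_{1,n}^{(0)}) = (u_{0,n}, u_{1,n})$ and define the obstruction
\[
\eta_J \doteq \limsup_{n \to \infty} \bigl\|\mathbf{S}_L(t) (u_{0,n}^{(J)}, u_{1,n}^{(J)})\bigr\|_{Y(\Rm)}.
\]
If at some stage $\eta_J = 0$ the decomposition is complete with $(w_{0,n}^J, w_{1,n}^J) = (u_{0,n}^{(J)}, u_{1,n}^{(J)})$; otherwise we need to locate a profile inside the $J$-th remainder.

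The heart of the argument is the following inverse Strichartz principle: if $\{(f_n,g_n)\}$ is bounded in $H$ by $A$ and
\[
\liminf_{n\to\infty} \bigl\|\mathbf{S}_L(t)(f_n,g_n)\bigr\|_{Y(\Rm)} \geq \eta > 0,
\]
then after passing to a subsequence one can find parameters $(\lambda_n,t_n) \in \Rm^+ \times \Rm$ and a nonzero pair $(v_0,v_1) \in \dot{H}^{s_p} \times \dot{H}^{s_p-1}$ with
\[
\mathbf{S}_L(-t_n)\, \Di_{\lambda_n}^{-1}(f_n,g_n) \;\rightharpoonup\; (v_0,v_1) \quad\text{weakly in }H,
\]
and with the quantitative lower bound $\|(v_0,v_1)\|_H \gtrsim \eta^{\alpha} A^{-\beta}$ for suitable exponents $\alpha,\beta > 0$ depending on $p$. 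I would prove this by interpolating the Strichartz norm $Y$ between a Besov-type norm controlled by $\|(f_n,g_n)\|_H$ and a weaker norm, then using a dyadic frequency decomposition of $\mathbf{S}_L(t)(f_n,g_n)$ together with the Sobolev embedding to locate a spacetime cube on which the evolution concentrates; the center and side length of that cube produce $(t_n,\lambda_n)$ after rescaling.

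With this extraction lemma in hand I would iterate, setting $V_J(x,t) = \mathbf{S}_{L,0}(t)(v_{0,J}, v_{1,J})$ and
\[
(u_{0,n}^{(J)}, u_{1,n}^{(J)}) = (u_{0,n}^{(J-1)}, u_{1,n}^{(J-1)}) - \mathbf{S}_L(-t_{J,n}) \Di_{\lambda_{J,n}}(v_{0,J}, v_{1,J}).
\]
Almost orthogonality (ii) follows by a standard contradiction: if $\lambda_{j',n}/\lambda_{j,n}$, $\lambda_{j,n}/\lambda_{j',n}$, and $|t_{j,n}-t_{j',n}|/\lambda_{j,n}$ all stayed bounded for some $j<j'$, then the pullback used to extract $V_{j'}$ would coincide, up to bounded scaling and translation, with the pullback already used for $V_j$, forcing the weak limit to be zero by construction, contradicting $v_{j',0}\neq 0$. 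The Pythagorean identity (iv) is a direct consequence of this orthogonality together with the weak convergence defining each $v_{j,0}$: expanding $\|(u_{0,n}^{(J-1)},u_{1,n}^{(J-1)})\|_H^2$ and using that cross terms involving parameters satisfying (ii) vanish in the limit by a standard scaling/translation lemma on $H$ inner products.

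The smallness (iii) is forced by (iv): each extracted profile satisfies $\|V_j\|_H \gtrsim \eta_{j-1}^\alpha A^{-\beta}$, so the sum $\sum_j \|V_j\|_H^2 \leq A^2$ implies $\eta_J \to 0$ as $J \to \infty$. Finally (v) is a routine diagonal extraction: for each fixed $j$ the sequences $\lambda_{j,n}$ and $-t_{j,n}/\lambda_{j,n}$ take values in $\Rm^+$ and $\Rm$, and after passing to a subsequence and absorbing the dilation and translation limits into a redefinition of $V_j$ we may assume the limits lie in $\{0,1,\infty\}$ and $[-\infty,+\infty]$ respectively. The main obstacle is the inverse Strichartz step at the critical regularity $\dot{H}^{s_p}$ with the $Y$ spacetime norm: one must verify the exact scaling exponents $3/2 - s_p$ and $5/2 - s_p$ in the dilation $\Di_\lambda$ are compatible with both the Strichartz pair defining $Y$ and the weak $H$-convergence, and that the concentration cube genuinely yields a nontrivial weak limit rather than vanishing under the rescaling.
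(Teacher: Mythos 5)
The paper does not prove this theorem; it cites Bahouri and G\'erard \cite{bahouri} and adds three remarks explaining how to pass from the energy-critical case $p=5$ to $3<p<5$, from non-radial to radial data, and how to drop the extra hypothesis (1.6) in that paper so that the limit $\lambda_j$ may be $0$, $1$, or $\infty$. Your outline reproduces exactly the Bahouri--G\'erard/Keraani scheme that the cited reference uses: iterated extraction of concentrating bubbles via an inverse Strichartz lemma, orthogonality of the parameter sequences by a weak-limit contradiction, the Pythagorean expansion from vanishing cross terms, smallness of the $Y$-norm of the remainder from the summability of the profile norms, and a diagonal extraction plus renormalization for part (v). So you are on the same route as the paper's source.

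That said, your proposal is a skeleton rather than a proof. The single nontrivial ingredient, the refined (inverse) Strichartz inequality at regularity $s_p$ adapted to the $Y = L_t^{2p/(1+s_p)}L_x^{2p/(2-s_p)}$ norm, is only described impressionistically (``interpolate with a Besov-type norm,'' ``locate a spacetime cube''), and you explicitly flag at the end that you have not verified the exponents. That is precisely where all the work lives, and without it the lower bound $\|(v_0,v_1)\|_H \gtrsim \eta^\alpha A^{-\beta}$ that drives the whole iteration is unsubstantiated. Two smaller points worth tightening: in your orthogonality argument you should make explicit that after passing to a subsequence where the bounded ratios converge, the weak limit defining $(v_{0,j'},v_{1,j'})$ is computed along a sequence that is, up to a fixed bounded reparametrization, the sequence already sent weakly to zero by the construction of the $(j-1)$-st remainder; and in your Pythagorean step the cross terms vanish because $\mathbf{S}_L(t_{J,n})\Di_{1/\lambda_{J,n}}(u_{0,n}^{(J)},u_{1,n}^{(J)}) \rightharpoonup 0$ by construction, not directly from condition (ii). Neither of these is fatal, but as written they would not survive a referee without expansion.
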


Please see \cite{bahouri} for the proof. There are a few remarks.
\begin{itemize}
 \item This original paper deals with the energy critical case $p=5$. But the same argument works for all $3 < p <5$ as well.
 \item The original paper works for non-radial initial data as well. In this work we only consider the radial case.
 \item The original theorem is proved under an additional assumption labelled (1.6) there. But this condition can be eliminated according to Remark 5 on Page 159 of that paper. The elimination of this condition also implies that $\lambda_j$, the limit of the sequence $\lambda_{j,n}$ as $n \rightarrow \infty$, may converge to $1$ or $+\infty$, besides $0$,  as given in part (v) above.
\end{itemize}

We need to prove a few lemmata before the introduction of the non-linear profiles.

\begin{lemma} \label{almost orthogonality 4}
If $j \neq j'$, then we have the almost orthogonality
 \[
  \lim_{n \rightarrow \infty} \left\langle \mathbf{S}_L (-t_{j,n}) \Di_{\lambda_{j,n}} (v_{0,j}, v_{1,j}),  \mathbf{S}_L (-t_{j',n}) \Di_{\lambda_{j',n}} (v_{0,j'}, v_{1,j'}) \right\rangle_H = 0.
 \]
\end{lemma}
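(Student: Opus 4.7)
The strategy is to exploit the isometric and intertwining properties of $\mathbf{S}_L(t)$ and $\Di_\lambda$ on $H$ to put the inner product in a canonical form, then split into a scale-separation case and a time-separation case according to the almost-orthogonality condition (ii) of Theorem \ref{profile decomposition}. First, one verifies that $\Di_\lambda$ and $\mathbf{S}_L(t)$ are both unitary on $H$ (the dilation by a direct scaling calculation at the critical index $s_p$; the propagator by conservation of the $\dot H^{s_p}\times\dot H^{s_p-1}$ norm for free waves), and that they obey $\Di_\lambda\Di_\mu = \Di_{\lambda\mu}$ and $\Di_\lambda\mathbf{S}_L(t) = \mathbf{S}_L(\lambda t)\Di_\lambda$. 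Applying the unitary $\Di_{\lambda_{j',n}}^{-1}\mathbf{S}_L(t_{j',n})$ simultaneously to both arguments of the inner product collapses it to
\[
 I_n \doteq \left\langle \mathbf{S}_L(\tau_n)\,\Di_{\mu_n}(v_{0,j},v_{1,j}),\,(v_{0,j'},v_{1,j'})\right\rangle_H,
\]
where $\mu_n \doteq \lambda_{j,n}/\lambda_{j',n}$ and $\tau_n \doteq (t_{j',n}-t_{j,n})/\lambda_{j',n}$. After passing to a subsequence, condition (ii) puts us in exactly one of two regimes: (a) $\mu_n\to 0$ or $\mu_n\to\infty$; or (b) $\mu_n\to\mu_\infty\in(0,\infty)$ and $|\tau_n|\to\infty$ (the latter because, with $\mu_n$ bounded above and below, $|t_{j,n}-t_{j',n}|/\lambda_{j,n}\to\infty$ forces $|\tau_n|\to\infty$).

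Next, by bilinearity of $\langle\cdot,\cdot\rangle_H$ and the uniform isometric bound $\|\mathbf{S}_L(\tau_n)\Di_{\mu_n}\|_{H\to H}=1$, a standard density argument reduces the proof to the case in which $(v_{0,j},v_{1,j})$ and $(v_{0,j'},v_{1,j'})$ are Schwartz pairs whose Fourier transforms are supported in a fixed compact annulus $\{c\le|\xi|\le R\}$ (density by Littlewood--Paley). A direct computation gives $\widehat{\Di_\mu v_k}(\xi) = \mu^{3/2+s_p-k}\hat v_k(\mu\xi)$ for $k\in\{0,1\}$, so this Fourier transform is supported in $\{c/\mu\le|\xi|\le R/\mu\}$. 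In regime (a) this support drifts to $\{0\}$ when $\mu_n\to\infty$ and to infinity when $\mu_n\to 0$, and in either case becomes disjoint from the fixed annular support of $\hat v_{j'}$ for $n$ large, so $I_n$ vanishes identically. In regime (b), after extracting a further subsequence with $\Di_{\mu_n}v_j\to\Di_{\mu_\infty}v_j$ strongly in $H$, $I_n$ becomes a finite sum of oscillatory integrals of the type $\int_{\Rm^3} f(\xi)\,e^{\pm i\tau_n|\xi|}\,d\xi$ with $f\in L^1(\Rm^3)$ compactly supported away from the origin, which vanish as $|\tau_n|\to\infty$ by the Riemann--Lebesgue lemma.

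The main obstacle is the density reduction. The Sobolev weights $|\xi|^{2s_p}$ and $|\xi|^{2s_p-2}$ present in the $H$-inner product make generic $L^2$-type approximations inadequate, because the low and high frequencies are weighted very differently; insisting that the Fourier support lie in a compact annulus bounded away from both $0$ and $\infty$ is exactly what makes the frequency-disjointness argument in regime (a) and the Riemann--Lebesgue argument in regime (b) both clean. Once this reduction is in place, everything else is bookkeeping on the Fourier side.
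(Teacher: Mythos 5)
Your proposal is correct and takes essentially the same route as the paper: the paper's proof consists of the same unitary normalization (moving the inner product to the form $\langle \Di_{\lambda_{j,n}/\lambda_{j',n}} (v_{0,j}, v_{1,j}), {\mathbf S}_L ((t_{j,n}-t_{j',n})/\lambda_{j',n}) (v_{0,j'}, v_{1,j'}) \rangle$, which is your $I_n$ after one more application of unitarity of ${\mathbf S}_L$) followed by the phrase ``we can immediately finish the proof by the almost orthogonal condition (ii) and basic Fourier analysis.'' You have spelled out exactly the Fourier analysis the paper leaves implicit: the Littlewood--Paley density reduction to annular Fourier support, the disjoint-support argument in the scale-separated regime, and the Riemann--Lebesgue argument (after converting $\cos(\tau_n|\xi|)$, $\sin(\tau_n|\xi|)$ to $e^{\pm i\tau_n|\xi|}$ and passing to polar coordinates) in the time-separated regime, with the correct case split read off from condition (ii). The dilation Fourier formula $\widehat{\Di_\mu v_k}(\xi) = \mu^{3/2+s_p-k}\hat v_k(\mu\xi)$ is right, and the observation that $\mu_n$ bounded above and below forces $|\tau_n|\to\infty$ under condition (ii) closes the case split cleanly. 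One tiny cosmetic remark: in regime (b), the strong convergence $\Di_{\mu_n}v_j\to\Di_{\mu_\infty}v_j$ in $H$ is automatic from $\mu_n\to\mu_\infty$ by strong continuity of the dilation group, so no further subsequence extraction is needed there; but this is harmless.
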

\begin{proof}
We rewrite the inner product into
\begin{align*}
  &\left\langle \mathbf{S}_L (-t_{j,n}) \Di_{\lambda_{j,n}} (v_{0,j}, v_{1,j}),  \mathbf{S}_L (-t_{j',n}) \Di_{\lambda_{j',n}} (v_{0,j'}, v_{1,j'}) \right\rangle\\
 =& \left\langle \Di_{\lambda_{j,n}/\lambda_{j',n}} (v_{0,j}, v_{1,j}), {\mathbf S}_L \left(\frac{t_{j,n}-t_{j',n}}{\lambda_{j',n}}\right) (v_{0,j'}, v_{1,j'}) \right\rangle.
\end{align*}
We can immediately finish the proof by the almost orthogonal condition (ii) and basic Fourier analysis.
\end{proof}

\begin{lemma}\label{weak convergence}
Let $\{(w_{0,n}, w_{1,n})\}_{n\in {\mathbb Z}^+}$ be a bounded sequence in the space $\dot{H}^{s_p} \times \dot{H}^{s_p-1}$, i.e. $\|(w_{0,n}, w_{1,n})\|_{\dot{H}^{s_p} \times \dot{H}^{s_p-1}} \leq A$ so that $\|{\mathbf S}_{L,0} (t)(w_{0,n}, w_{1,n})\|_{Y(\Rm)} \rightarrow 0$. Then we have the weak limit $(w_{0,n}, w_{1,n}) \rightharpoonup 0$ in $\dot{H}^{s_p} \times \dot{H}^{s_p-1}$.
\end{lemma}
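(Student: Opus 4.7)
The plan is a subsequence argument based on weak lower semicontinuity of the Strichartz norm. Since $H=\dot H^{s_p}\times\dot H^{s_p-1}$ is a reflexive Hilbert space, a bounded sequence converges weakly to $(0,0)$ if and only if every weakly convergent subsequence has limit $(0,0)$. So I would extract a subsequence $(w_{0,n},w_{1,n})\rightharpoonup (w_0,w_1)$ in $H$, set $W_n(x,t)={\mathbf S}_{L,0}(t)(w_{0,n},w_{1,n})$ and $W(x,t)={\mathbf S}_{L,0}(t)(w_0,w_1)$, and aim to show $(w_0,w_1)=(0,0)$.

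First, $\|W_n\|_{Y(\Rm)}\le CA$ by Proposition~\ref{strichartz}. Because $3<p<5$ gives $s_p\in(1/2,1)$, the Strichartz exponents $q_1=\tfrac{2p}{1+s_p}$ and $r_1=\tfrac{2p}{2-s_p}$ both lie in $(2,\infty)$, so $Y(\Rm)=L^{q_1}_tL^{r_1}_x$ is reflexive. Pass to a further subsequence with $W_n\rightharpoonup \widetilde W$ in $Y(\Rm)$. To identify $\widetilde W=W$, test against $\psi\in C_c^\infty(\Rm^3\times\Rm)$ and rewrite, using self-adjointness of $\cos(t\sqrt{-\Delta})$ and $\tfrac{\sin(t\sqrt{-\Delta})}{\sqrt{-\Delta}}$,
\[
 \int_\Rm\!\int_{\Rm^3} W_n\psi\,dx\,dt = \int_\Rm\!\bigl\langle w_{0,n},\cos(t\sqrt{-\Delta})\psi(\cdot,t)\bigr\rangle\,dt + \int_\Rm\!\Bigl\langle w_{1,n},\tfrac{\sin(t\sqrt{-\Delta})}{\sqrt{-\Delta}}\psi(\cdot,t)\Bigr\rangle\,dt,
\]
the pairings being the standard $\dot H^{s_p}$--$\dot H^{-s_p}$ and $\dot H^{s_p-1}$--$\dot H^{1-s_p}$ dualities. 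For each fixed $t$ the two integrands converge by weak convergence of $(w_{0,n},w_{1,n})$ in $H$; since $\psi$ has compact time-support and the two evolution operators are bounded on every homogeneous Sobolev space uniformly in $t$ on that support, the integrands are dominated by a fixed $L^1(\Rm_t)$ function, so dominated convergence yields $\int\!\int W_n\psi\,dx\,dt \to \int\!\int W\psi\,dx\,dt$, and hence $\widetilde W=W$.

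Finally, weak lower semicontinuity of the norm in the reflexive space $Y(\Rm)$ gives
\[
 \|W\|_{Y(\Rm)}\le\liminf_{n\to\infty}\|W_n\|_{Y(\Rm)}=0,
\]
so $W\equiv 0$; evaluating $W$ and $\partial_t W$ at $t=0$ then recovers $(w_0,w_1)=(0,0)$, completing the subsequence argument. The main obstacle is the identification $\widetilde W=W$: one must transfer weak convergence of initial data through the non-compact linear propagator into a scale-invariant space-time norm. The duality computation above handles this by shifting all $t$-dependence onto the smooth compactly supported test function, reducing the question to dominated convergence on a bounded time interval; once that identification is in hand, the rest of the proof is the soft fact that norms on reflexive $L^{q_1}L^{r_1}$ spaces are weakly lower semicontinuous.
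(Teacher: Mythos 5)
Your proof is correct and rests on the same underlying idea as the paper's: the linear propagator $\mathbf{S}_{L,0}$ is a bounded linear operator from $H=\dot H^{s_p}\times\dot H^{s_p-1}$ to $Y(\Rm)$, and weak convergence in $H$ must therefore transfer to weak convergence in $Y(\Rm)$, where the hypothesis already gives strong convergence to $0$. The paper invokes the abstract functional-analytic fact that a bounded linear operator between Banach spaces is automatically weak-to-weak continuous; you instead unfold this fact into an explicit duality computation, using self-adjointness of $\cos(t\sqrt{-\Delta})$ and $\sin(t\sqrt{-\Delta})/\sqrt{-\Delta}$ and dominated convergence in $t$. That is a legitimate and more self-contained route, though worth noting it buys you nothing the abstract statement doesn't already give. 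Your argument also carries a small redundancy: once the duality step shows $\int W_n\psi\to\int W\psi$ for all $\psi\in C_c^\infty(\Rm^3\times\Rm)$, the uniform bound $\|W_n\|_{Y(\Rm)}\le CA$ together with density of test functions in $(Y(\Rm))^*=L^{q_1'}L^{r_1'}$ already yields $W_n\rightharpoonup W$ in $Y(\Rm)$; there is no need first to extract a weakly convergent subsequence via reflexivity and then identify the limit. Likewise, the appeal to weak lower semicontinuity of the $Y$-norm at the end can be replaced by the simpler observation that strong convergence $W_n\to 0$ implies weak convergence to $0$, and weak limits are unique, forcing $W=0$. Finally, your phrase ``the two evolution operators are bounded on every homogeneous Sobolev space uniformly in $t$ on that support'' is slightly loose for $\sin(t\sqrt{-\Delta})/\sqrt{-\Delta}$, whose $\dot H^{1-s_p}\to\dot H^{1-s_p}$ operator norm grows like $|t|$; what you actually use (and what suffices) is boundedness on the compact time-support of $\psi$, which is fine.
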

\begin{proof}
 If the weak limit $(w_{0,n}, w_{1,n}) \rightharpoonup 0$ were not true, we could assume $(w_{0,n}, w_{1,n}) \rightharpoonup (w_0,w_1) \neq 0$ in $\dot{H}^{s_p} \times \dot{H}^{s_p-1}$ by possibly passing to a subsequence. Because the map $(u_0,u_1) \rightarrow {\mathbf S}_{L,0} (t) (u_0,u_1)$ is a bounded linear operator from the space $\dot{H}^{s_p} \times \dot{H}^{s_p-1}$ to $Y(\Rm)$ by the Strichartz estimates, we also have a weak limit ${\mathbf S}_{L,0} (t)(w_{0,n}, w_{1,n}) \rightharpoonup {\mathbf S}_{L,0} (t)(w_0,w_1)$ in the space $Y(\Rm)$. On the other hand, the same sequence ${\mathbf S}_{L,0} (t)(w_{0,n}, w_{1,n})$ has a strong limit zero in the space $Y(\Rm)$ by the conditions given. As a result, we have ${\mathbf S}_{L,0} (t)(w_0,w_1) = 0 \Longrightarrow (w_0,w_1) = 0$. This is a contradiction.
\end{proof}

\begin{lemma}\label{almost orthogonal of w}
 Assume $\|(w_{0,n}, w_{1,n})\|_{\dot{H}^{s_p} \times \dot{H}^{s_p-1}} \leq A$ and $\|{\mathbf S}_{L,0} (t)(w_{0,n}, w_{1,n})\|_{Y(\Rm)} \rightarrow 0$. Let $I $ be a closed time interval and $(U_0(x,t), U_1(x,t)) \in C(I; \dot{H}^{s_p} \times \dot{H}^{s_p-1})$. If $I$ contains a neighbourhood of $\infty$ or $-\infty$, we also assume
 \[
  \lim_{t \rightarrow \pm \infty} \left\|\begin{pmatrix} U_0(\cdot,t)\\ U_1(\cdot,t)\end{pmatrix}- {\mathbf S}_L (t) \begin{pmatrix} u_0^\pm \\ u_1^\pm \end{pmatrix}\right\|_{\dot{H}^{s_p} \times \dot{H}^{s_p-1}} = 0
 \]
 for some pair(s) $(u_0^\pm ,u_1^\pm) \in \dot{H}^{s_p} \times \dot{H}^{s_p-1}$. Then for any two sequences $\{\lambda_n: \lambda_n >0\}_{n \in {\mathbb Z}^+}$ and $\{t_n: t_n \in I\}_{n \in {\mathbb Z}^+}$, we always have the limit
\[
 \langle \Di_{\lambda_n} (U_0(\cdot, t_n), U_1(\cdot, t_n)), (w_{0,n}, w_{1,n})\rangle_{\dot{H}^{s_p} \times \dot{H}^{s_p-1}} \rightarrow 0.
\]
\end{lemma}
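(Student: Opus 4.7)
The plan is to reduce the statement to Lemma \ref{weak convergence} by moving the ``bad'' operators off of $(w_{0,n},w_{1,n})$ one at a time and checking that each transformation preserves both hypotheses of that lemma. After passing to a subsequence I may assume $\lambda_n \to \lambda \in [0,\infty]$ and $t_n \to t^\ast \in I \cup \{\pm\infty\}$.

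First, since $\Di_\lambda$ is an isometry of $H$ with inverse $\Di_{1/\lambda}$, it is unitary, so
\[
 \bigl\langle \Di_{\lambda_n}(U_0(\cdot,t_n),U_1(\cdot,t_n)),\, (w_{0,n},w_{1,n}) \bigr\rangle_H
 = \bigl\langle (U_0(\cdot,t_n),U_1(\cdot,t_n)),\, \widetilde{w}_n \bigr\rangle_H,
 \qquad \widetilde{w}_n := \Di_{1/\lambda_n}(w_{0,n},w_{1,n}).
\]
The exponents defining $Y$ satisfy the scale-invariance relation $\tfrac{1}{q}+\tfrac{3}{r}=\tfrac{3}{2}-s_p$ (a direct computation using $s_p = \tfrac{3}{2}-\tfrac{2}{p-1}$), so $\|\mathbf{S}_{L,0}(t)\widetilde{w}_n\|_{Y(\Rm)} = \|\mathbf{S}_{L,0}(t)(w_{0,n},w_{1,n})\|_{Y(\Rm)} \to 0$, while $\|\widetilde{w}_n\|_H \leq A$. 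Lemma \ref{weak convergence} then gives $\widetilde{w}_n \rightharpoonup 0$ in $H$.

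If $t^\ast$ is finite, continuity of $(U_0,U_1)$ in $t$ yields strong convergence $(U_0(\cdot,t_n),U_1(\cdot,t_n)) \to (U_0(\cdot,t^\ast),U_1(\cdot,t^\ast))$ in $H$, and pairing a strongly convergent sequence with $\widetilde{w}_n \rightharpoonup 0$ sends the inner product to $0$.

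The main obstacle is the case $t_n \to \pm\infty$, say $t_n \to +\infty$, because the family $\mathbf{S}_L(t_n)(u_0^+,u_1^+)$ is only weakly (not strongly) precompact in $H$. By the scattering hypothesis the difference between $(U_0(\cdot,t_n),U_1(\cdot,t_n))$ and $\mathbf{S}_L(t_n)(u_0^+,u_1^+)$ tends to zero in $H$, so it suffices to control $\langle \mathbf{S}_L(t_n)(u_0^+,u_1^+),\widetilde{w}_n\rangle_H$. Using that $\mathbf{S}_L(t)$ is unitary on $H$ with adjoint $\mathbf{S}_L(-t)$,
\[
 \bigl\langle \mathbf{S}_L(t_n)(u_0^+,u_1^+),\, \widetilde{w}_n \bigr\rangle_H
 = \bigl\langle (u_0^+,u_1^+),\, \mathbf{S}_L(-t_n)\widetilde{w}_n \bigr\rangle_H.
\]
The sequence $\mathbf{S}_L(-t_n)\widetilde{w}_n$ is bounded in $H$, and by translation-invariance of the $Y(\Rm)$ norm in time,
\[
 \|\mathbf{S}_{L,0}(t)\mathbf{S}_L(-t_n)\widetilde{w}_n\|_{Y(\Rm)} = \|\mathbf{S}_{L,0}(t-t_n)\widetilde{w}_n\|_{Y(\Rm)} = \|\mathbf{S}_{L,0}(t)\widetilde{w}_n\|_{Y(\Rm)} \longrightarrow 0.
\]
A second application of Lemma \ref{weak convergence} gives $\mathbf{S}_L(-t_n)\widetilde{w}_n \rightharpoonup 0$, and pairing with the fixed vector $(u_0^+,u_1^+)$ produces the desired conclusion. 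The case $t_n \to -\infty$ is identical using $(u_0^-,u_1^-)$.
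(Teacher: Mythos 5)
Your proof is correct and takes essentially the same approach as the paper: move $\Di_{\lambda_n}$ (and where needed $\mathbf{S}_L(-t_n)$) across the pairing by unitarity, observe that the $H$-bound and the $Y(\Rm)$-smallness are preserved, and invoke Lemma~\ref{weak convergence} to get weak convergence to zero of the transformed error term. The paper packages the finite and infinite cases of $\lim t_n$ together by applying $\mathbf{S}_L(-t_n)$ unconditionally and using that $\{\mathbf{S}_L(-t)(U_0(\cdot,t),U_1(\cdot,t)):t\in I\}$ is pre-compact in $H$, whereas you split into a case analysis (continuity for finite limits, the scattering limits $(u_0^\pm,u_1^\pm)$ for $t_n\to\pm\infty$) after passing to a subsequence; both are valid and the content is the same.
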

\begin{proof}
 First of all, we can rewrite the pairing into
\begin{align*}
 & \langle \Di_{\lambda_n} (U_0(\cdot, t_n), U_1(\cdot, t_n)), (w_{0,n}, w_{1,n})\rangle_{\dot{H}^{s_p} \times \dot{H}^{s_p-1}} \\
  = & \langle  (U_0(\cdot, t_n), U_1(\cdot, t_n)), \Di_{1/\lambda_n} (w_{0,n}, w_{1,n})\rangle_{\dot{H}^{s_p} \times \dot{H}^{s_p-1}}\\
  = & \langle  {\mathbf S}_L (-t_n) (U_0(\cdot, t_n), U_1(\cdot, t_n)), {\mathbf S}_L (-t_n) \Di_{1/\lambda_n} (w_{0,n}, w_{1,n})\rangle_{\dot{H}^{s_p} \times \dot{H}^{s_p-1}}.
\end{align*}
According to the conditions given, we also have
\begin{itemize}
 \item The set $\{{\mathbf S}_L (-t) (U_0(\cdot, t), U_1(\cdot, t))| t \in I\}$ is pre-compact in $\dot{H}^{s_p} \times \dot{H}^{s_p-1}$.
 \item The sequence ${\mathbf S}_L (-t_n) \Di_{1/\lambda_n} (w_{0,n}, w_{1,n})$ converges weakly to $0$ in the space $\dot{H}^{s_p} \times \dot{H}^{s_p-1}$, because of Lemma \ref{weak convergence} and
 \begin{align*}
  \left\|{\mathbf S}_L (-t_n) \Di_{1/\lambda_n} (w_{0,n}, w_{1,n})\right\|_{\dot{H}^{s_p} \times \dot{H}^{s_p-1}} & = \left\|(w_{0,n}, w_{1,n})\right\|_{\dot{H}^{s_p} \times \dot{H}^{s_p-1}} \leq A;\\
  \left\|{\mathbf S}_{L,0} (t) {\mathbf S}_L (-t_n) \Di_{1/\lambda_n} (w_{0,n}, w_{1,n})\right\|_{Y(\Rm)} & = \left\|{\mathbf S}_{L,0} (t)(w_{0,n}, w_{1,n})\right\|_{Y(\Rm)} \rightarrow 0.
 \end{align*}
\end{itemize}
Therefore the pairing converges to zero.
\end{proof}
%\begin{lemma} \emph{\textbf{Nonlinear Profile}} Let $V(x,t)$ be a solution of the linear wave equation so that
%\[
 %\|(V(x,0), \partial_t V(x,0))\|_H < \infty
%\]
%with a sequence of time $t_i \rightarrow t_0 \in [-\infty, +\infty]$, then there exists a solution $U(x,t)$ of the nonlinear wave equation defined
%around time $t_0$, so that
%\[
 %\|(U(x,t_i)-V(x,t_i), \partial_t U(x,t_i) - \partial_t V(x, t_i))\|_H \rightarrow 0.
%\]
%\end{lemma}
\subsection{Nonlinear Profiles}

In this subsection we introduce the nonlinear profiles and prove some properties of them.

\begin{definition} [A nonlinear profile]
 Fix $\tilde{\phi}$ to be either the function $\phi$ or a constant function $c$. Let $V(x,t) = {\mathbf S}_{L,0} (t) (v_0,v_1)$ be a free wave and $\tilde{t} \in [-\infty,\infty]$ be a time. We say that $U(x,t)$ is a nonlinear profile associated to $(V,\tilde{\phi}, \tilde{t})$ if $U(x,t)$ is a solution to the nonlinear wave equation
\begin{equation} \label{nonlinear profile1}
 \partial_t^2 u - \Delta u = \tilde{\phi} F(u)
\end{equation}
with a maximal timespan $I$ so that $I$ contains a neighbourhood\footnote{A neighbourhood of infinity is $(M, +\infty)$, if $\tilde{t} = +\infty$; or $(-\infty, M)$, if $\tilde{t} = - \infty$.} of $\tilde{t}$ and
\[
 \lim_{t \rightarrow \tilde{t}} \|(U(\cdot,t),\partial_t U (\cdot,t))- (V(\cdot,t), \partial_t V(\cdot,t))\|_{\dot{H}^{s_p} \times \dot{H}^{s_p-1}} = 0.
\]
\end{definition}
\begin{remark}
Given a triple $(V, \tilde{\phi}, \tilde{t})$ as above, one can show there is always a unique nonlinear profile. Please see Remark 2.13 in \cite{kenig1} for the idea of proof. In particular, if $\tilde{t}$ is finite, then the nonlinear profile $U$ is simply the solution to the equation \eqref{nonlinear profile1} with the initial data $(U(\cdot, \tilde{t}), \partial_t U(\cdot, \tilde{t})) = (V(\cdot, \tilde{t}), \partial_t V(\cdot, \tilde{t}))$. We will also use the fact that the nonlinear profile automatically scatters in the positive time direction if $\tilde{t} = + \infty$.
\end{remark}

%\paragraph{Choice of $\tilde{\phi}$} For each linear profile, we still need to determine which coefficient function $\tilde{\phi}$ should be used. In fact, $\tilde{\phi}$ is determined by the pair $(\lambda_j, x_j)$.

\begin{definition} [Nonlinear Profiles] \label{def of nonlinear profiles}
 For each linear profile $V_j$ in a profile decomposition as given in Theorem \ref{profile decomposition}, we define $U_j$ to be the nonlinear profile associated to $(V_j, \phi_j, t_j)$. Here the coefficient function $\phi_j$ is chosen according to the value of $\lambda_j$:
 \begin{itemize}
 \item If $\lambda_j = 0$, then we choose $\phi_j(x) \equiv \phi(0)$;
 \item If $\lambda_j = 1$, then we choose $\phi_j(x) = \phi (x)$;
 \item If $\lambda_j = \infty$, then we choose $\phi_j(x) \equiv \phi(\infty)$.
 \end{itemize}
In either case, we use the notation $I_j$ for the maximal lifespan of $U_j$ and define
\[
 U_{j,n}(x,t) \doteq \frac{1}{\lambda_{j,n}^{3/2-s_p}} U_j \left(\frac{x}{\lambda_{j,n}}, \frac{t - t_{j,n}}{\lambda_{j,n}}\right).
\]
\end{definition}

\begin{remark} \label{nonlinear and linear profiles are close}
By the definition of nonlinear profile, for each $j$ we have the limit
\[
 \displaystyle \lim_{n \rightarrow \infty} \|(U_{j,n}(\cdot, 0), \partial_t U_{j,n}(\cdot,0)) - \mathbf{S}_L (-t_{j,n}) \Di_{\lambda_{j,n}} (v_{0,j}, v_{1,j})\|_{\dot{H}^{s_p} \times \dot{H}^{s_p-1}} = 0.
\]
\end{remark}

\begin{lemma} \label{almost orthogonality 3}
 If $j\neq j'$, then we have the following almost orthogonality.
 \[
  \lim_{n \rightarrow \infty} \left\langle (U_{j,n}(\cdot, 0), \partial_t U_{j,n}(\cdot,0)), (U_{j',n}(\cdot, 0), \partial_t U_{j',n}(\cdot,0)) \right\rangle_{\dot{H}^{s_p} \times \dot{H}^{s_p-1}} = 0.
 \]
\end{lemma}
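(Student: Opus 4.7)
The plan is to reduce the claim to the almost orthogonality of the \emph{linear} profiles, which is already Lemma \ref{almost orthogonality 4}, by using the approximation of each nonlinear initial datum $(U_{j,n}(\cdot,0),\partial_t U_{j,n}(\cdot,0))$ by its linear counterpart $\mathbf{S}_L(-t_{j,n})\Di_{\lambda_{j,n}}(v_{0,j},v_{1,j})$, provided by Remark \ref{nonlinear and linear profiles are close}. Denote
\[
A_{j,n} = \bigl(U_{j,n}(\cdot,0),\partial_t U_{j,n}(\cdot,0)\bigr), \qquad B_{j,n} = \mathbf{S}_L(-t_{j,n})\Di_{\lambda_{j,n}}(v_{0,j},v_{1,j}),
\]
and write the bilinear decomposition
\[
\langle A_{j,n}, A_{j',n}\rangle_H = \langle B_{j,n}, B_{j',n}\rangle_H + \langle A_{j,n}-B_{j,n}, A_{j',n}\rangle_H + \langle B_{j,n}, A_{j',n}-B_{j',n}\rangle_H.
\]

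The first term tends to $0$ by Lemma \ref{almost orthogonality 4}. For the two remaining cross terms I will invoke Cauchy--Schwarz. The key observation is that both $\Di_\lambda$ and $\mathbf{S}_L(t)$ act isometrically on $\dot{H}^{s_p}\times \dot{H}^{s_p-1}$ (the dilation weights $\lambda^{-(3/2-s_p)}$ and $\lambda^{-(5/2-s_p)}$ are precisely the scale-invariant ones for this norm, and $\mathbf{S}_L(t)$ preserves the free-wave energy at level $s_p$), so
\[
\|B_{j,n}\|_H = \|(v_{0,j},v_{1,j})\|_H = \|V_j\|_H
\]
is independent of $n$. Combined with Remark \ref{nonlinear and linear profiles are close}, which yields $\|A_{j,n}-B_{j,n}\|_H\to 0$, it follows that $\|A_{j,n}\|_H$ is uniformly bounded in $n$. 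Therefore
\[
|\langle A_{j,n}-B_{j,n}, A_{j',n}\rangle_H| \le \|A_{j,n}-B_{j,n}\|_H \cdot \|A_{j',n}\|_H \longrightarrow 0,
\]
and the symmetric estimate handles the third term. Adding the three contributions yields the claim.

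There is essentially no obstacle here: the whole argument is a two-line triangle-inequality reduction to the already-proved linear almost orthogonality, and the main thing to verify carefully is the dilation invariance of the $H$-norm, which is a direct Fourier computation.
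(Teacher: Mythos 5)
Your argument is exactly the one the paper has in mind: the paper's proof is the single line ``This is a direct corollary of Remark \ref{nonlinear and linear profiles are close} and Lemma \ref{almost orthogonality 4}.'' You have simply spelled out the bilinear decomposition and Cauchy--Schwarz steps that make that reduction precise, so the proposal is correct and essentially identical in approach.
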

\begin{proof} This is a direct corollary of Remark \ref{nonlinear and linear profiles are close} and Lemma \ref{almost orthogonality 4}.
\end{proof}

\begin{lemma} \label{general almost orthogonality}
Assume $\|\tilde{U}_j\|_{Y(I'_j)} < \infty$ for $j=1,2$.  Let $\{(\lambda_{1,n}, t_{1,n})\}_{n \in {\mathbb Z}^+}$ and $\{(\lambda_{2,n}, t_{2,n})\}_{n \in {\mathbb Z}^+}$  be two ``almost orthogonal'' sequences of pairs, i.e.
\[
 \lim_{n \rightarrow +\infty} \left(\frac{\lambda_{2,n}}{\lambda_{1,n}}+ \frac{\lambda_{1,n}}{\lambda_{2,n}} + \frac{|t_{1,n} - t_{2,n}|}{\lambda_{1,n}}\right) = + \infty.
\]
If $\{J_n\}$ is a sequence of time intervals, such that $J_n \subseteq (t_{1,n} + \lambda_{1,n}I'_1)\cap (t_{2,n} + \lambda_{2,n}I'_2)$ holds for all sufficiently large positive integers $n$, then we have
\[
 N(n) \doteq \left\|\tilde{U}_{1,n} \tilde{U}_{2,n} \right\|_{L_t^\frac{p}{1+s_p} L_x^\frac{p}{2-s_p} (J_n \times \Rm^3)} \rightarrow 0, \qquad \hbox{as}\;\; n \rightarrow \infty.
\]
Here $\tilde{U}_{j,n}$ is defined as usual
\[
 \tilde{U}_{j,n}(x,t) = \frac{1}{\lambda_{j,n}^{3/2-s_p}} \tilde{U}_j \left(\frac{x}{\lambda_{j,n}}, \frac{t - t_{j,n}}{\lambda_{j,n}}\right).
\]
\end{lemma}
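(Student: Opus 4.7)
The approach is density reduction followed by a change-of-variables computation. Observe first that the target exponents are one-half of the $Y$-norm exponents in the Hölder-dual sense, so the bilinear Hölder inequality gives
\begin{equation*}
\|fg\|_{L^{p/(1+s_p)}_t L^{p/(2-s_p)}_x(\Rm\times\Rm^3)}\le\|f\|_{Y(\Rm)}\|g\|_{Y(\Rm)}.
\end{equation*}
The $Y$-norm is invariant under the critical scaling $(x,t)\mapsto(\lambda x,\lambda(t+\tau))$ with multiplicative factor $\lambda^{-(3/2-s_p)}$, which one checks from the identity $1/q+3/r=3/2-s_p$ for $q=2p/(1+s_p)$, $r=2p/(2-s_p)$; consequently $N(n)$ is already uniformly bounded by $\|\tilde U_1\|_{Y(I'_1)}\|\tilde U_2\|_{Y(I'_2)}$.

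For $\varepsilon>0$, approximate each $\tilde U_j$ (extended by zero outside $I'_j\times\Rm^3$) by $\psi_j\in C^\infty_c(\Rm\times\Rm^3)$ supported in $[-T_j,T_j]\times B(0,R_j)$ with $\|\tilde U_j-\psi_j\|_{Y(\Rm)}<\varepsilon$. Scale invariance yields $\|\tilde U_{j,n}-\psi_{j,n}\|_{Y(J_n)}<\varepsilon$ when the $\psi_{j,n}$ are rescaled in the same way as the $\tilde U_{j,n}$. Expanding $\tilde U_{1,n}\tilde U_{2,n}-\psi_{1,n}\psi_{2,n}$ and applying the bilinear bound above, all cross terms are controlled by $C\varepsilon$, so it suffices to prove $\|\psi_{1,n}\psi_{2,n}\|_{L^a_t L^b_x(\Rm\times\Rm^3)}\to 0$, with $a=p/(1+s_p)$, $b=p/(2-s_p)$.

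Change variables by $y=x/\lambda_{1,n}$, $s=(t-t_{1,n})/\lambda_{1,n}$, and set $\mu_n=\lambda_{1,n}/\lambda_{2,n}$, $\tau_n=(t_{1,n}-t_{2,n})/\lambda_{2,n}$. Using $1/a+3/b=2(3/2-s_p)$, the powers of $\lambda_{j,n}$ combine to give
\begin{equation*}
\|\psi_{1,n}\psi_{2,n}\|_{L^a_t L^b_x}^{a}=\mu_n^{a(3/2-s_p)}\int_{\Rm}\Bigl(\int_{\Rm^3}|\psi_1(y,s)|^b\,|\psi_2(\mu_n y,\mu_n s+\tau_n)|^b\,dy\Bigr)^{a/b}ds.
\end{equation*}
Pass to a subsequence on which each of $\mu_n$, $1/\mu_n$, $|\tau_n|$ converges in $[0,\infty]$; the almost-orthogonality hypothesis forces at least one of these limits to be $+\infty$. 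If $\mu_n\to 0$, bound $|\psi_2|\le\|\psi_2\|_\infty$ to obtain $\le\mu_n^{a(3/2-s_p)}\|\psi_2\|_\infty^a\|\psi_1\|_{L^a_t L^b_x}^a\to 0$. The case $1/\mu_n\to 0$ is symmetric, obtained by pivoting the change of variables on $(\lambda_{2,n},t_{2,n})$ rather than $(\lambda_{1,n},t_{1,n})$. Otherwise $\mu_n,1/\mu_n$ are both bounded, so $|\tau_n|=\mu_n\cdot|t_{1,n}-t_{2,n}|/\lambda_{1,n}\to\infty$, and then for every $s\in[-T_1,T_1]$ the point $\mu_n s+\tau_n$ eventually leaves $[-T_2,T_2]$, making the integrand identically zero for large $n$.

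The only delicate point is the bookkeeping of scaling powers in the change-of-variables identity, which is ultimately just the critical scaling behind the $Y$-norm; once that identity is verified, the subsequence dichotomy and the compact-support / $L^\infty$ bounds are routine. The density step is standard because $C^\infty_c$ is dense in the Lebesgue space defining $Y$, and the approximation quality is preserved under the critical scaling.
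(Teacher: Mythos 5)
Your proof is correct and follows essentially the same strategy as the paper's: reduce by density to bounded, compactly supported profiles (your bilinear Hölder bound makes the needed continuity of the product map quantitative, which is cleaner than the paper's appeal to continuity of $\Phi$), then dichotomize on the limiting behaviour of $\lambda_{1,n}/\lambda_{2,n}$ and use the scaling power $\mu_n^{a(3/2-s_p)}$ from the change of variables when the scales separate, and disjointness of the (compact, rescaled) time supports when the scales are comparable but the time centres separate. The only presentational difference is that the paper argues by contradiction along a subsequence where $N(n_k)\geq\eps_0$, whereas you run the subsequence dichotomy directly; the cases and the estimates are the same.
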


\begin{proof} (See also Lemma 2.7 in \cite{orthogonality}) First of all, we only need to consider the special case with $I'_j = \Rm$ and $J_n = \Rm$ for all $j,n \in {\mathbb Z}^+$. Otherwise we can extend the domain of the functions by defining $\tilde{U}_j (x,t) = 0$ for all $t \notin I'_j$. Observing the continuity of the map
\[
 \Phi: Y(\Rm) \times Y(\Rm) \rightarrow l^\infty,\quad \Phi (\tilde{U}_1,\tilde{U}_2) = \left\{\left\|\tilde{U}_{1,n} \tilde{U}_{2,n}\right\|_{L_t^\frac{p}{1+s_p} L_x^\frac{p}{2-s_p} (\Rm \times \Rm^3)}\right\}_{n \in {\mathbb Z}^+};
\]
 %\[
  %\left\|U_{i,n}\right\|_{L^q L^r ((x_{i,n} + \lambda_{i,n}I'_i)\times \Rm^d)} = \|\tilde{U}_i\|_{L^q L^r (I'_i \times \Rm^d)}
 %\]
we can also assume, without loss of generality, that
\begin{align*}
 &\left|\tilde{U}_j (x,t)\right| \leq M_j, \; \hbox{for any}\; (x,t)\in \Rm^3 \times \Rm;& &\hbox{Supp} (\tilde{U}_j) \subseteq \{(x,t): |x|,|t| < R_j\} &
\end{align*}
for some constants $M_j$, $R_j$ and $j=1,2$, since the functions satisfying these conditions are dense in the space $Y (\Rm)$.  If the conclusion were false, we would find a sequence $n_1 < n_2 < n_3 < \cdots$ and a positive constant $\eps_0$ such that $ N(n_k) \geq \eps_0$. There are three cases
\begin{itemize}
 \item [(I)] $\limsup_{k \rightarrow \infty} \lambda_{1, n_k}/ \lambda_{2,n_k} = \infty$. First of all, the product $\tilde{U}_{1,n_k} \tilde{U}_{2,n_k}$ is supported in the $(3+1)$-dimensional circular cylinder centred at $(0, t_{2,n_k})$ with radius $\lambda_{2,n_k} R_2$ and height $2\lambda_{2,n_k} R_2$ because $\tilde{U}_{2,n_k}$ is supported in this cylinder. In addition we have
 \[
  \left|\tilde{U}_{1,n_k} \tilde{U}_{2,n_k}\right| \leq \lambda_{1,n_k}^{-3/2+s_p} \lambda_{2,n_k}^{-3/2+s_p} M_1 M_2.
 \]
A basic computation shows
\[
N(n_k) = \left\|\tilde{U}_{1,n_k} \tilde{U}_{2,n_k} \right\|_{L_t^\frac{p}{1+s_p} L_x^\frac{p}{2-s_p} (\Rm \times \Rm^3)} \leq C(p) M_1 M_2 R_2^{3-2s_p} \left(\frac{\lambda_{2, n_k}}{\lambda_{1,n_k}}\right)^{3/2-s_p}.
\]
This upper bound tends to zero as $\lambda_{1, n_k}/ \lambda_{2,n_k} \rightarrow \infty$. Thus we have a contradiction.
\item [(II)] $\limsup_{k \rightarrow \infty} \lambda_{2, n_k}/ \lambda_{1,n_k} = \infty$. This can be handled in the same way as case (I).
\item [(III)] $\lambda_{1, n_k} \simeq \lambda_{2, n_k}$. By the ``almost orthogonality'' of the sequences of pairs, we also have
\[
 \frac{|t_{1,n_k} - t_{2,n_k}|}{\lambda_{1,n_k}} \rightarrow \infty.
\]
This implies $\hbox{Supp}(\tilde{U}_{1,n_k}) \cap \hbox{Supp}(\tilde{U}_{2,n_k}) = \emptyset$ when $k$ is sufficiently large thus gives a contradiction.
\end{itemize}
\end{proof}

\begin{lemma} \label{uniform estimate in J}
 Assume $I'_j \subseteq I_j$ with $\|U_j(x,t)\|_{Y(I'_j)} < \infty$. Let $\{J_n\}$ be a sequence of time intervals, so that given $J\in {\mathbb Z}^+$ we have $\displaystyle J_n \subseteq \cap_{j =1}^J (t_{j,n} + \lambda_{j,n} I'_j)$ for sufficiently large $n$. Then the following limits hold for each $J \in {\mathbb Z}^+$.
\[
 \lim_{n \rightarrow \infty} \left\|F\left(\sum_{j=1}^J U_{j,n}\right) - \sum_{j=1}^J F(U_{j,n})\right\|_{Z(J_n)} = 0.
\]
\[
 \limsup_{n \rightarrow \infty} \left\|\sum_{j=1}^J U_{j,n} \right\|_{Y(J_n)} \leq \left(\sum_{j=1}^{J}\|U_j\|_{Y(I'_j)}^{p}\right)^{1/p}.
\]
\end{lemma}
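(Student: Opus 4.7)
The plan is to prove (I) first by reducing the $Z(J_n)$-norm of the nonlinear error to bilinear interactions, each of which is controlled by Lemma \ref{general almost orthogonality}. Part (II) will then follow from (I) together with the exponent identity $\|u\|_{Y}^p = \|F(u)\|_Z$, which holds by unfolding the definitions of $Y$ and $Z$ (the time exponent $2p/(1+s_p)$ becomes $2/(1+s_p)$ after raising to the $p$-th power, and similarly in space).

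For (I), I would begin with the pointwise algebraic estimate
\[
 \left|F\!\left(\sum_{j=1}^J a_j\right) - \sum_{j=1}^J F(a_j)\right| \leq C(J,p) \sum_{j \neq j'} |a_j|^{p-1} |a_{j'}|,
\]
valid for all real numbers $a_1,\dots,a_J$. This reduces by induction on $J$ to the base case $|F(a+b)-F(a)-F(b)| \lesssim_p |a|^{p-1}|b|+|a||b|^{p-1}$, which follows from integrating $F'$ on the segment from $a$ to $a+b$ (recall $p>3>2$, so $F \in C^1$). Applying this pointwise with $a_j=U_{j,n}(x,t)$ and taking the $Z(J_n)$-norm, it suffices to show that for each fixed pair $j\neq j'$ one has $\bigl\||U_{j,n}|^{p-1}|U_{j',n}|\bigr\|_{Z(J_n)} \to 0$ as $n\to\infty$. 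Writing $|U_{j,n}|^{p-1}|U_{j',n}| = |U_{j,n}|^{p-2}\cdot |U_{j,n}\, U_{j',n}|$ and applying Hölder in time and space with exponents $\bigl(\tfrac{2p}{(p-2)(1+s_p)},\tfrac{2p}{(p-2)(2-s_p)}\bigr)$ and $\bigl(\tfrac{p}{1+s_p},\tfrac{p}{2-s_p}\bigr)$, which add up correctly to $\bigl(\tfrac{2}{1+s_p},\tfrac{2}{2-s_p}\bigr)$, one obtains
\[
 \left\||U_{j,n}|^{p-1}|U_{j',n}|\right\|_{Z(J_n)} \leq \|U_{j,n}\|_{Y(J_n)}^{\,p-2}\,\bigl\|U_{j,n} U_{j',n}\bigr\|_{L_t^{p/(1+s_p)} L_x^{p/(2-s_p)}(J_n\times\Rm^3)}.
\]
The first factor is bounded by $\|U_j\|_{Y(I'_j)}^{p-2}$ uniformly in $n$, since the $Y$-norm is invariant under the critical scaling that defines $U_{j,n}$ (the identity $1/q+3/r=3/2-s_p$ is satisfied by the $Y$-exponents). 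The second factor tends to $0$ by Lemma \ref{general almost orthogonality}. Summing over the finitely many pairs with $j\neq j'$ proves part (I).

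For (II), using $\|u\|_Y^p=\|F(u)\|_Z$ with $u=\sum_{j=1}^J U_{j,n}$, part (I), and the triangle inequality,
\[
 \left\|\sum_{j=1}^J U_{j,n}\right\|_{Y(J_n)}^{p} = \left\|F\Bigl(\sum_{j=1}^J U_{j,n}\Bigr)\right\|_{Z(J_n)} \leq \sum_{j=1}^J \|F(U_{j,n})\|_{Z(J_n)} + o_n(1) = \sum_{j=1}^J \|U_{j,n}\|_{Y(J_n)}^{p} + o_n(1).
\]
Invoking scale-invariance once more, $\|U_{j,n}\|_{Y(J_n)}\leq \|U_j\|_{Y(I'_j)}$, so the right-hand side is at most $\sum_{j=1}^J\|U_j\|_{Y(I'_j)}^p + o_n(1)$. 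Taking the $p$-th root and then the $\limsup$ in $n$ yields the claimed bound.

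The only part requiring real care is the Lebesgue-exponent bookkeeping in the Hölder split of the cross terms; the rest is either an algebraic pointwise estimate on $F$ or a direct application of Lemma \ref{general almost orthogonality}. I do not anticipate a substantial obstacle.
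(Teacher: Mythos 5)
Your proposal is correct and takes essentially the same route as the paper: both arguments reduce the nonlinear error to bilinear cross terms controlled by Lemma~\ref{general almost orthogonality}, both invoke the scale-invariance of the $Y$ norm and the identity $\|u\|_{Y}^p=\|F(u)\|_Z$ for part (II), and the Hölder exponent bookkeeping you outline is the same one used implicitly in the paper. The only (cosmetic) difference is that you establish the pointwise algebraic estimate on $F$ first and then pass to $Z$ norms, whereas the paper runs the induction at the level of $Z$-norm estimates, writing $F\bigl(\sum_{j\le J}U_{j,n}\bigr)-F\bigl(\sum_{j<J}U_{j,n}\bigr)-F(U_{J,n})$ via a double integral of $F''$ and applying Hölder directly to that representation.
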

\paragraph{Proof} The first limit can be proved by an induction. The case $J=1$ is trivial. If $J>1$, the combination of the following estimate and the induction hypothesis finishes the job.
\begin{eqnarray*}
&& \limsup_{n \rightarrow \infty} \left\|F\left(\sum_{j=1}^J U_{j,n}\right) - F\left(\sum_{j=1}^{J-1} U_{j,n}\right) - F(U_{J,n})\right\|_{Z(J_n)}\\
&=& \limsup_{n \rightarrow \infty} \left\|\left(U_{J,n} \int_0^1 F'\left(\tau U_{J,n} + \sum_{j=1}^{J-1} U_{j,n}\right) d\tau \right)
- \left(U_{J,n} \int_0^1 F'(\tau U_{J,n})d\tau\right) \right\|_{Z(J_n)}\\
&=& \limsup_{n \rightarrow \infty} \left\| \left(U_{J,n} \sum_{j=1}^{J-1} U_{j,n}\right) \left( \int_0^1 \int_0^1
F''\left(\tau U_{J,n} + \tilde{\tau}\sum_{j=1}^{J-1} U_{j,n}\right) d\tilde{\tau} d\tau\right)\right\|_{Z(J_n)}\\
&\leq& \limsup_{n \rightarrow \infty} C_p \left(\sum_{j=1}^{J-1} \|U_{J,n} U_{j,n}\|_{L^\frac{p}{1+s_p} L^\frac{p}{2-s_p}(J_n \times \Rm^3)}\right)
\left( \sum_{j=1}^{J}\|U_{j,n}\|_{Y(J_n)}\right)^{p-2}\\
&\leq& \limsup_{n \rightarrow \infty} C_p \left(\sum_{j=1}^{J-1} \|U_{J,n} U_{j,n}\|_{L^\frac{p}{1+s_p} L^\frac{p}{2-s_p}(J_n \times \Rm^3)}\right)
\left( \sum_{j=1}^{J}\|U_j\|_{Y(I'_j)}\right)^{p-2}\\
&=& 0.
\end{eqnarray*}
In the last step we apply Lemma \ref{general almost orthogonality}. The second limit is a corollary:
\begin{eqnarray*}
 \limsup_{n\rightarrow \infty} \left\|\sum_{j=1}^J U_{j,n}\right\|_{Y(J_n)}^p
 &=& \limsup_{n \rightarrow \infty} \left\|F\left(\sum_{j=1}^J U_{j,n}\right)\right\|_{Z(J_n)}\\
 &\leq& \limsup_{n \rightarrow \infty} \left(\sum_{j=1}^{J}\|F(U_{j,n})\|_{Z(J_n)}\right)\\
 &\leq& \limsup_{n \rightarrow \infty} \left(\sum_{j=1}^{J}\|U_{j,n}\|_{Y(J_n)}^p\right)\\
 %&\leq& \limsup_{n \rightarrow \infty} \left(\sum_{j=1}^{J}\|U_{j}\|_{Y(I'_j)}^p\right)\\
 &\leq& \sum_{j=1}^{J}\|U_{j}\|_{Y(I'_j)}^p.
\end{eqnarray*}
\begin{remark} \label{arbitrary subsequence}
 The same result still holds if we arbitrarily select a few profiles from $U_j$'s. More precisely, if the inequality $\|U_{j_k}\|_{Y(I'_{j_k})} < \infty$ holds for positive integers $j_1 < j_2 < \cdots < j_m$, then we have
\[
 \lim_{n \rightarrow \infty} \left\|F\left(\sum_{k=1}^m U_{j_k,n}\right) - \sum_{k=1}^m F(U_{j_k,n})\right\|_{Z(J_n)} = 0;
\]
\[
 \limsup_{n \rightarrow \infty} \left\|\sum_{k=1}^m U_{j_k,n} \right\|_{Y(J_n)} \leq \left(\sum_{k=1}^{m}\|U_{j_k}\|_{Y(I'_{j_k})}^{p}\right)^{1/p};
\]
as long as $\displaystyle J_n \subseteq \cap_{k=1}^m (t_{j_k,n} + \lambda_{j_k,n} I'_{j_k})$ holds for all sufficiently large $n$.
\end{remark}

\begin{lemma}[Commutator Estimate] \label{commutator}
Fix $j \in {\mathbb Z}^+$. If $I'_j \subseteq I_j$ so that $\|U_j\|_{I'_j} < \infty$, then the error term
 \[
   e_{j,n} = (\partial_t^2 - \Delta) U_{j,n} - \phi F(U_{j,n})
 \]
satisfies $\displaystyle \lim_{n \rightarrow \infty} \|e_{j,n}\|_{Z(\lambda_{j,n} I'_j + t_{j,n})} = 0$.
\end{lemma}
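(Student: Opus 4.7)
The plan is to compute $e_{j,n}$ explicitly, reduce to a fixed integral via the critical scaling, and finish by dominated convergence.

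First, since $U_j$ is the nonlinear profile associated to $(V_j,\phi_j,t_j)$, it solves $(\partial_t^2-\Delta)U_j=\phi_j F(U_j)$ on $I_j$, where $\phi_j$ is either $\phi$ (in the stable case $\lambda_j=1$) or the constant $\phi(0)$ or $\phi(\infty)$ (in the concentrating or expanding cases). Because $3/2-s_p=2/(p-1)$ is the critical exponent, one has $p(3/2-s_p)=3/2-s_p+2$, so a direct application of $\partial_t^2-\Delta$ to the rescaled $U_{j,n}$ gives
\[
(\partial_t^2-\Delta)U_{j,n}(x,t)=\phi_j\!\left(\frac{x}{\lambda_{j,n}}\right)F(U_{j,n})(x,t),
\qquad
e_{j,n}(x,t)=\left[\phi_j\!\left(\frac{x}{\lambda_{j,n}}\right)-\phi(x)\right]F(U_{j,n})(x,t).
\]

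Next I would exploit the invariance of the $Z$-norm under the critical scaling. Setting $y=x/\lambda_{j,n}$, $s=(t-t_{j,n})/\lambda_{j,n}$, and $\tilde\psi_n(y):=\phi_j(y)-\phi(\lambda_{j,n}y)$, a change of variables produces
\[
\|e_{j,n}\|_{Z(\lambda_{j,n}I'_j+t_{j,n})}=\|\tilde\psi_n\,F(U_j)\|_{Z(I'_j)}.
\]
In each of the three geometric cases, $\tilde\psi_n\to 0$ pointwise a.e.\ in $y$: when $\lambda_j=0$ one has $\lambda_{j,n}y\to 0$ and continuity of $\phi$ at the origin gives $\phi(\lambda_{j,n}y)\to\phi(0)=\phi_j(y)$; when $\lambda_j=1$ one has $\lambda_{j,n}y\to y$ and continuity of $\phi$ at $y$ suffices; when $\lambda_j=\infty$ one has $|\lambda_{j,n}y|\to\infty$ for every $y\neq 0$, so the assumption $\lim_{|x|\to\infty}\phi(x)=\phi(\infty)$ forces $\phi(\lambda_{j,n}y)\to\phi(\infty)=\phi_j(y)$. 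The uniform bound $|\tilde\psi_n|\le 2$ follows from $|\phi|,|\phi_j|\le 1$.

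Finally, since $\|F(U_j)\|_{Z(I'_j)}=\|U_j\|_{Y(I'_j)}^p<\infty$ by hypothesis, the function $|F(U_j)|$ supplies a fixed dominant in the mixed norm $L^{2/(1+s_p)}_s L^{2/(2-s_p)}_y$. Applying the dominated convergence theorem first in $y$ for a.e.\ fixed $s$, and then in $s$, yields $\|\tilde\psi_n\,F(U_j)\|_{Z(I'_j)}\to 0$, which is exactly the required conclusion. The only mildly technical ingredient is the critical-scale invariance identity together with the iterated dominated convergence in a mixed Lebesgue norm; both are routine, and notably no density approximation on $U_j$ is needed, since the three geometric cases are absorbed into the single statement $\tilde\psi_n\to 0$ pointwise.
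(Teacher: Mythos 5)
Your proof is correct and follows essentially the same route as the paper's: use the critical scaling identity $p(3/2-s_p)=3/2-s_p+2$ to write $e_{j,n}=[\phi_j(x/\lambda_{j,n})-\phi(x)]F(U_{j,n})$, change variables to reduce $\|e_{j,n}\|_{Z(\lambda_{j,n}I'_j+t_{j,n})}$ to $\|(\phi_j-\phi(\lambda_{j,n}\cdot))F(U_j)\|_{Z(I'_j)}$, and conclude by dominated convergence via the a.e.\ pointwise limit $\phi(\lambda_{j,n}y)\to\phi_j(y)$. Your write-up merely makes explicit the three cases $\lambda_j\in\{0,1,\infty\}$ and the iterated dominated convergence in the mixed Lebesgue norm, which the paper leaves implicit.
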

\begin{proof}
First of all, applying a space-time dilation we have
\[
 \partial_t^2 U_j  -\Delta U_j = \phi_j (x) F(U_j) \; \Longrightarrow (\partial_t^2 - \Delta) U_{j,n} = \phi_j \left(\frac{x}{\lambda_{j,n}}\right) F(U_{j,n}).
\]
Here $\phi_j (x)$ is chosen as in Definition \ref{def of nonlinear profiles}. Therefore we have
\begin{align*}
 \|e_{j,n}\|_{Z(\lambda_{j,n}I'_j + t_{j,n})} = &\left\|\left(\phi_j \left(\frac{x}{\lambda_{j,n}}\right) -\phi(x)\right) F(U_{n,j})\right\|_{Z(\lambda_{j,n}I'_j + t_{j,n})}\\
 = & \left\|\left(\phi_j \left(x\right) -\phi(\lambda_{j,n} x)\right) F(U_j)\right\|_{Z(I'_j)} \rightarrow 0
\end{align*}
by the dominated convergence theorem and the (almost everywhere) point-wise limit $\phi(\lambda_{j,n} x) \rightarrow \phi_j (x)$.
\end{proof}

\section{Compactness Procedure} 
In this section we prove the existence of a critical element and its compactness properties.

\begin{theorem} \label{existence of critical element}
If $Sc(A)$ breaks down at $M$, i.e. the statement $Sc(A)$ holds for all $A\leq M$ but fails for all $A>M$, then there exists a radial solution $u(x,t)$ to (CP1), called a critical element, such that it satisfies
\begin{itemize}
 \item[(i)] Its maximal lifespan is $\Rm$;
 \item[(ii)] It blows up in both time directions with $\|u\|_{Y ([0, \infty))} = \|u\|_{Y ((-\infty,0])}= +\infty$.
 \item[(iii)] The upper bound of its critical Sobolev norm is equal to $M$.
  \[
    \sup_{t \in \Rm} \|(u(\cdot, t), \partial_t u(\cdot, t))\|_{\dot{H}^{s_p} \times \dot{H}^{s_p-1} (\Rm^3)} = M.
  \]
 \item[(iv)] The set $\{(u(\cdot, t), \partial_t u(\cdot, t))| t\in \Rm\}$ is pre-compact in the space $\dot{H}^{s_p} \times \dot{H}^{s_p-1} (\Rm^3)$.
 \end{itemize}
\end{theorem}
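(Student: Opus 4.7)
The plan is to carry out the Kenig--Merle concentration--compactness extraction, using the profile decomposition (Theorem~\ref{profile decomposition}) together with the nonlinear profiles of Definition~\ref{def of nonlinear profiles}. First I would pick a sequence $\{u_n\}$ of radial, non-scattering solutions to (CP1) with maximal lifespans $(-T_n^-, T_n^+)$ and $\sup_{t\in[0,T_n^+)}\|(u_n(t),\partial_t u_n(t))\|_H \searrow M$. Using the standard trick of choosing $\tau_n\in[0,T_n^+)$ with $\|u_n\|_{Y([0,\tau_n])}$ equal to some large fixed constant and then replacing $u_n(t)$ by $u_n(t+\tau_n)$, I may assume $\|u_n\|_{Y((-T_n^-,0])}=\|u_n\|_{Y([0,T_n^+))}=\infty$ for every $n$. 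Apply Theorem~\ref{profile decomposition} to $\{(u_n(\cdot,0),\partial_t u_n(\cdot,0))\}$ and, for each linear profile $V_j$, form the associated nonlinear profile $U_j$ with lifespan $I_j$ as in Definition~\ref{def of nonlinear profiles}, together with its rescaled/translated copies $U_{j,n}$.

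The main dichotomy is: either there is a \emph{single} dominant profile $V_1$ with $\|V_1\|_H=M$ and every other profile plus the error vanish (in $H$), or every profile satisfies $\|V_j\|_H<M$ strictly. In the second alternative I would argue that each $U_j$ exists globally and is bounded in the $Y$-norm. Indeed, the sup of $\|(U_j(t),\partial_t U_j(t))\|_H$ over the maximal lifespan can be shown to be $\leq M$ via Lemma~\ref{almost orthogonality 3} and the Pythagorean identity (iv) of the profile decomposition, extracted by approximating $u_n$ at times where the profile concentrates; and then $Sc(A)$ at level $A<M$ (for the stable profile) together with Proposition~\ref{theorem with constant phi} (for the expanding and concentrating profiles, where $U_j$ solves a constant-coefficient equation) yields a uniform bound on $\|U_j\|_Y$ on all of $\mathbb R$.

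Next I would build the approximate solution $\tilde u_n^J(x,t)=\sum_{j=1}^J U_{j,n}(x,t) + {\mathbf S}_{L,0}(t)(w_{0,n}^J,w_{1,n}^J)$. Because each nonlinear profile scatters, Lemma~\ref{uniform estimate in J} (together with Remark~\ref{arbitrary subsequence}) shows $\limsup_n\|\tilde u_n^J\|_{Y(\Rm)}\leq (\sum_j \|U_j\|_{Y(\Rm)}^p)^{1/p}$, which is bounded uniformly in $J$. The commutator error $e_{j,n}$ from Lemma~\ref{commutator}, the cross terms controlled by Lemma~\ref{uniform estimate in J}, and the contribution of the linear error (small in $Y$ by item (iii), small in $Z$ after multiplication by the profiles by the $Y\times Y\to Z$-type bounds) combine to show that $(\partial_t^2-\Delta)\tilde u_n^J -\phi F(\tilde u_n^J)$ is arbitrarily small in $Z(\Rm)$ when $J$ and then $n$ are chosen large. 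Theorem~\ref{perturbation theory in Ysp} then gives $\|u_n\|_{Y(\Rm)}<\infty$ for large $n$, contradicting the blow-up of $u_n$ in $Y$. This step — pushing the perturbation through despite the variable coefficient $\phi(x)$ and the three different equations satisfied by the profiles — is what I expect to be the main technical obstacle, but Lemmas~\ref{general almost orthogonality}--\ref{commutator} are tailored exactly for this.

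So the first alternative holds: up to subsequence the decomposition collapses to one profile $V_1$ with $\|V_1\|_H = M$, all other profiles are zero, and $\|(w_{0,n}^1,w_{1,n}^1)\|_H\to 0$. If $\lambda_1\in\{0,\infty\}$, then $U_1$ solves a constant-coefficient equation with bounded $H$-norm and hence scatters by Proposition~\ref{theorem with constant phi}, and the same perturbation argument would force $u_n$ to scatter — contradiction. Similarly $t_1\in\Rm$ (else $U_1$ scatters trivially). Hence $\lambda_1=1$, $U_1$ solves (CP1), and after a time translation I take $u\doteq U_1$. Properties (ii) and (iii) then follow: any failure of (ii) (finite $Y$-norm on a half-line) or strict inequality in (iii) would again let perturbation theory transfer to $u_n$ a bound on $\|u_n\|_Y$, contradicting the choice of the sequence. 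For (i), finite-time blow-up of $u$ likewise leads, via perturbation, to a bound $T_+^n \leq $ const, contradicting the existence of times $\tau_n\to T_+^n$. Finally, for pre-compactness (iv), given any sequence $\{t_n\}\subset\Rm$, I apply the profile decomposition to $(u(\cdot,t_n),\partial_t u(\cdot,t_n))$: the same dichotomy forces a single profile carrying the full norm $M$ and the $H$-error tending to zero, which is precisely strong convergence of a subsequence of $(u(\cdot,t_n),\partial_t u(\cdot,t_n))$ in $\dot H^{s_p}\times\dot H^{s_p-1}(\Rm^3)$.
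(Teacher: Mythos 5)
Your overall plan follows the Kenig--Merle template and correctly identifies the main ingredients (profile decomposition, nonlinear profiles adapted to $\lambda_j\in\{0,1,\infty\}$, commutator/orthogonality lemmas, long-time perturbation). However, there is a genuine gap in your dichotomy, and a circularity in the way you propose to close the multi-profile case.

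First, your dichotomy is formulated in terms of $\|V_j\|_H$, namely ``either $\|V_1\|_H=M$ and everything else vanishes, or all $\|V_j\|_H<M$.'' This does not match what the argument actually controls. The quantity that equals $M$ for the critical element is $\sup_{t\in I_1}\|(U_1(t),\partial_t U_1(t))\|_H$, not $\|V_1\|_H$; these differ because the nonlinear flow can increase the critical Sobolev norm away from the matching time $t_1$. In the paper's own extraction, one ends up with exactly one profile with $\|V_1\|_H\le M$ (possibly strictly less), vanishing error, and only afterwards deduces $\sup_t\|(U_1(t),\partial_t U_1(t))\|_H = M$ from Lemma~\ref{orthogonality 01} combined with the fact that $U_1$ fails to scatter while $Sc(M)$ holds. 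Thus your ``second alternative'' (all $\|V_j\|_H<M$) can in fact contain the legitimate single-profile critical element, and your claim that in that alternative every $U_j$ scatters is false.

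Second, even in the genuinely multi-profile subcase, your argument is circular: to ``approximate $u_n$ at times where the profile concentrates'' and extract a Pythagorean bound on $\sup_t\|(U_j(t),\partial_t U_j(t))\|_H$, you must run the perturbation theory on a time interval $[0,\bar t_n]$ on which \emph{every} $U_{j',n}$ with $j'\ne j$ has finite $Y$-norm; but whether the other $U_{j'}$ extend that far is exactly what you are trying to establish. And even if it went through, the bound you would obtain is $\le M$, while $Sc(A)$ at level $A<M$ requires strict inequality $<A$; for a non-scattering profile one in fact has $\sup_t\|(U_j(t),\partial_t U_j(t))\|_H \ge M$ (by $Sc(M)$ and Proposition~\ref{theorem with constant phi}), so $\sup=M$ and you cannot conclude scattering. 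The paper's argument runs in the opposite direction: it first shows some profile $U_{j_0}$ fails to scatter (Proposition~\ref{existence of non-scattering profiles}), picks a time $T_{j_0}$ at which $\|(U_{j_0}(T_{j_0}),\partial_t U_{j_0}(T_{j_0}))\|_H$ is close to $M$, truncates all profiles to intervals $I'_j$ of finite $Y$-norm, sets $\bar t_n$ to be the first exit time from the common intersection $\cap_j (t_{j,n}+\lambda_{j,n}I'_j)$ (so that all profiles are simultaneously controlled there), and then uses Corollary~\ref{small data preserve} to show the remaining profiles and error contribute an additional $\eta(\eps_0/2)$ to $\|(u_n(\bar t_n),\partial_t u_n(\bar t_n))\|_H$, pushing it strictly above $M$ --- contradicting assumption (iii) on the extraction sequence. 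You should replace your scattering-via-$Sc(A)$ argument with this ``total exceeds $M$'' contradiction, which sidesteps both the strictness issue and the circularity.

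Your treatment of the single surviving profile (why $\lambda_1=1$, $t_1\in\Rm$, why $U_1$ solves (CP1) rather than a constant-coefficient equation, and how pre-compactness and global existence follow) is in the right spirit and agrees with the paper, modulo the caveat that the paper derives global existence in (i) from the pre-compactness (iv) and Remark~\ref{lifespan lower bound for compact set}, rather than from the perturbation theory directly.
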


\subsection{Setup} \label{sec: setup}
Now let us assume that the statement $Sc(A)$ breaks down at $M$. First of all, we can take a sequence of non-scattering radial solutions $v_n (x,t)$ with maximal lifespans $(-\tilde{T}_n^-, \tilde{T}_n^+)$ so that
\begin{align*}
 &\|v_n\|_{Y([0,\tilde{T}_n^+))} = + \infty;& &\sup_{t \in [0,\tilde{T}_n^+)} \left\|(v_n(\cdot, t), \partial_t v_n (\cdot, t))\right\|_{\dot{H}^{s_p} \times \dot{H}^{s_p-1}} < M + 2^{-n}.&
\end{align*}
The first condition above enable us to find a time $\tilde{t}_n \in [0,\tilde{T}_n^+)$ for each $n$, such that $\|v_n\|_{Y([0,\tilde{t}_n])} = 2^n$. Time translations then give a sequence of new solutions $u_n (x,t) \doteq v_n (x,t+\tilde{t}_n)$. These solutions $\{u_n\}$ satisfy:
\begin{itemize}
\item[(i)] Each solution $u_n$ blows up in the positive time direction, i.e $\|u_n\|_{Y([0,T_n^+))} = + \infty$.
\item[(ii)] $\|u_n\|_{Y ((-T_n^-, 0])} > 2^n$.
\item[(iii)] The inequality $\|(u_n(\cdot, t), \partial_t u_n (\cdot, t)\|_{\dot{H}^{s_p} \times \dot{H}^{s_p-1}} < M + 2^{-n}$ holds for each $t \in [0, T_n^+)$ and for each $t<0$ that satisfies $\|u_n\|_{Y([t,0])} \leq 2^n$.
\end{itemize}
Here the notation $(-T_n^-, T_n^+)$ represents the maximal lifespan of $u_n$. We apply the profile decomposition (Theorem \ref{profile decomposition}) on the sequence of initial data $\{(u_{0,n}, u_{1,n})\} = \{(u_n (\cdot,0), \partial_t u_n (\cdot, 0)\}$, introduce the nonlinear profiles $U_j$ and then define the approximation solutions $U_{j,n}$ as described in Section \ref{sec: profile decomposition3}. The conclusion (iv) of the profile decomposition gives
\begin{equation} \label{norm of v}
 \sum_{j=1}^\infty \|V_j\|_{\dot{H}^{s_p} \times \dot{H}^{s_p-1}}^2 =  \sum_{j=1}^\infty \|(v_{j,0}, v_{j,1})\|_{\dot{H}^{s_p} \times \dot{H}^{s_p-1}}^2 \leq M^2.
\end{equation}
This implies that $\|(U_j (\cdot, t_j), \partial_t U_j (\cdot, t_j))\|_{\dot{H}^{s_p} \times \dot{H}^{s_p-1}} \rightarrow 0$ as $j \rightarrow \infty$ since the definition of the nonlinear profiles implies (if $t_j = \pm \infty$, the norm of $U_j$ is in the sense of limit as $t \rightarrow t_j$)
\[
 \|(U_j (\cdot, t_j), \partial_t U_j (\cdot, t_j))\|_{\dot{H}^{s_p} \times \dot{H}^{s_p-1}} = \|V_j\|_{\dot{H}^{s_p} \times \dot{H}^{s_p-1}}.
\]
According to Theorem \ref{Scattering with small data}, it follows that $U_j$ scatters in both time directions when $j > J_0$ is sufficiently large. In addition, we have
\begin{equation} \label{smaller Y}
 \|U_j\|_{Y(\Rm)} \lesssim_p \|(U_j (\cdot, t_j), \partial_t U_j (\cdot, t_j))\|_H = \|V_j\|_H, \; \hbox{if}\; j > J_0 \; \Longrightarrow \sum_{j=J_0+1}^\infty \|U_j\|_{Y(\Rm)}^p < \infty.
\end{equation}

\subsection{A Single Profile May Survive}
In this subsection we show all but one profile must be zero. If there were at least two nonzero profiles, say $U_{1}$ and $U_2$, we would have
\begin{equation} \label{definition of eps0}
 \eps_0 = \min\left\{\|V_1\|_H, \|V_2\|_H \right\} = \min\left\{\|(v_{1,0}, v_{1,1})\|_H, \|(v_{2,0}, v_{2,1})\|_H \right\} > 0.
\end{equation}
According to \eqref{norm of v}, we can always assume
\begin{equation}
 \sum_{j=J_0 +1}^\infty \|V_j\|_H^2= \sum_{j=J_0+1}^\infty \|(v_{j,0}, v_{j,1})\|_H^2 < \frac{\eps_0^2}{9},
\end{equation}
by possibly increasing the value of $J_0$.

\begin{lemma} \label{upper bound on tail}
 Given any $J > J_0$, we have
 \[
  \limsup_{n \rightarrow \infty} \left\|\sum_{j=J_0+1}^J \left(U_{j,n} (\cdot, 0), \partial_t U_{j,n} (\cdot,0)\right)\right\|_H < \frac{\eps_0}{3}.
 \]
\end{lemma}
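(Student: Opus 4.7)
The plan is to expand the squared $H$-norm of the finite sum and show that the off-diagonal terms vanish in the limit while the diagonal terms converge to $\sum_{j=J_0+1}^J \|V_j\|_H^2$, which is controlled by the tail estimate of the profile decomposition.

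More precisely, I would first write
\[
\left\|\sum_{j=J_0+1}^J (U_{j,n}(\cdot,0), \partial_t U_{j,n}(\cdot,0))\right\|_H^2 = \sum_{j=J_0+1}^J \|(U_{j,n}(\cdot,0),\partial_t U_{j,n}(\cdot,0))\|_H^2 + \sum_{\substack{j\neq j' \\ J_0 < j,j' \leq J}} \langle (U_{j,n}(\cdot,0),\partial_t U_{j,n}(\cdot,0)), (U_{j',n}(\cdot,0),\partial_t U_{j',n}(\cdot,0))\rangle_H.
\]
The cross terms each tend to $0$ as $n\to\infty$ by the already-established Lemma \ref{almost orthogonality 3}, so their sum (a fixed finite number of them, for $J$ fixed) vanishes in the limit.

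For the diagonal terms, I would invoke Remark \ref{nonlinear and linear profiles are close} together with the fact that both the linear wave propagator $\mathbf{S}_L(-t_{j,n})$ and the dilation $\Di_{\lambda_{j,n}}$ act as isometries on $\dot H^{s_p}\times \dot H^{s_p-1}$. This yields
\[
\lim_{n\to\infty} \|(U_{j,n}(\cdot,0),\partial_t U_{j,n}(\cdot,0))\|_H = \|\mathbf{S}_L(-t_{j,n})\Di_{\lambda_{j,n}}(v_{0,j}, v_{1,j})\|_H = \|(v_{0,j}, v_{1,j})\|_H = \|V_j\|_H,
\]
for each fixed $j$.

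Combining these two observations and using the tail bound $\sum_{j=J_0+1}^\infty \|V_j\|_H^2 < \eps_0^2/9$ established just before the lemma, I conclude
\[
\limsup_{n\to\infty}\left\|\sum_{j=J_0+1}^J (U_{j,n}(\cdot,0),\partial_t U_{j,n}(\cdot,0))\right\|_H^2 = \sum_{j=J_0+1}^J \|V_j\|_H^2 < \frac{\eps_0^2}{9},
\]
and taking square roots gives the desired bound $\eps_0/3$. There is no real obstacle here: the argument is a clean application of the orthogonality lemma, the isometry property of $\mathbf{S}_L$ and $\Di_{\lambda}$, and the profile decomposition's Pythagorean-type identity; the only mild point to watch is that the strict inequality in the tail bound transfers to the strict inequality in the conclusion, which is automatic because the finite sum is dominated by the (strictly smaller) infinite tail.
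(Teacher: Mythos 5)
Your proof is correct and follows essentially the same path as the paper's: the paper first replaces each $(U_{j,n}(\cdot,0),\partial_t U_{j,n}(\cdot,0))$ by the corresponding linear datum $\mathbf{S}_L(-t_{j,n})\Di_{\lambda_{j,n}}(v_{0,j},v_{1,j})$ via Remark \ref{nonlinear and linear profiles are close} and then expands using Lemma \ref{almost orthogonality 4}, whereas you expand first and invoke Lemma \ref{almost orthogonality 3} for the cross terms --- but since Lemma \ref{almost orthogonality 3} is itself just the combination of those two facts, the arguments are the same. (One cosmetic slip: in the displayed chain for the diagonal terms you write $\lim_{n\to\infty}\|\cdot\|_H = \|\mathbf{S}_L(-t_{j,n})\Di_{\lambda_{j,n}}(v_{0,j},v_{1,j})\|_H$ with a stray $n$ on the right; since that quantity is $n$-independent by isometry the meaning is clear, but the $n$ should not appear after the limit is taken.)
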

\begin{proof} By Remark \ref{nonlinear and linear profiles are close} and Lemma \ref{almost orthogonality 4}, we have
\begin{align*}
 \limsup_{n \rightarrow \infty} \left\|\sum_{j=J_0+1}^J \left(U_{j,n} (\cdot, 0), \partial_t U_{j,n} (\cdot,0)\right)\right\|_H^2 &= \limsup_{n \rightarrow \infty} \left\|\sum_{j=J_0+1}^J \mathbf{S}_L (-t_{j,n}) \Di_{\lambda_{j,n}} (v_{0,j}, v_{1,j}) \right\|_H^2\\
 & = \limsup_{n \rightarrow \infty} \sum_{j=J_0+1}^J \left\| \mathbf{S}_L (-t_{j,n}) \Di_{\lambda_{j,n}} (v_{0,j}, v_{1,j}) \right\|_H^2\\
 & = \limsup_{n \rightarrow \infty} \sum_{j=J_0+1}^J \left\| (v_{0,j}, v_{1,j}) \right\|_H^2 < \frac{\eps_0^2}{9}.
\end{align*}
\end{proof}
\begin{remark} \label{norm of partial initial data}
A similar argument as above shows that if $j_1 < j_2 < \cdots < j_m$ are positive integers, then we have
\[
 \lim_{n \rightarrow \infty} \left\|\sum_{k=1}^m \left(U_{j_k, n} (\cdot, 0), \partial_t U_{j_k, n} (\cdot, 0) \right)\right\|_H^2 = \sum_{k =1}^m \|(v_{0,j_k}, v_{1,j_k})\|_H^2.
\]
\end{remark}
\paragraph{Asymptotic behaviour} If $j>J_0$, then we have already known that the nonlinear profile $U_j$ scatters. We always choose $I'_j = \Rm$ in this case. Otherwise, if $j \leq J_0$, let us consider the behaviour of $U_j(x,t)$ as $t$ goes to $+\infty$. There are two cases:
\begin{itemize}
\item [(I)] $U_j$ scatters in the positive time direction. In this case we choose a time interval
\[
 I'_j = [t_j^-,+\infty) = \left\{\begin{array}{ll} [t_j^-, +\infty), & \hbox{if}\; -t_{j,n}/\lambda_{j,n} \rightarrow t_j \in \Rm, \; \hbox{here we fix $t_j^- \in (-\infty, t_j)\cap I_j$};\cr
 (-\infty, +\infty), & \hbox{if}\; -t_{j,n}/\lambda_{j,n} \rightarrow -\infty;\cr
 [t_j^-, +\infty), & \hbox{if}\; -t_{j,n}/\lambda_{j,n} \rightarrow +\infty,\; \hbox{here we fix $t_j^- \in I_j$}.
 \end{array}\right.
\]
\item[(II)] $U_j$ does not scatter with a maximal lifespan $(-T_j^-, T_j^+)$, thus $t_j < +\infty$. By our assumption on $M$ (if $\lambda_j =1$, see Theorem \ref{existence of critical element}) or Proposition \ref{theorem with constant phi} (if $\lambda_j \in \{0, +\infty\}$), we always have
\[
 \sup_{t\in (t_j, T_j^+)} \|(U_j(\cdot,t), \partial_t U_j(\cdot,t))\|_H \geq M.
\]
As a result, we can find a time $T_j \in (t_j, T_j^+)$ so that
\begin{equation} \label{bad point 1}
 \|(U_j(\cdot, T_j), \partial_t U_j(\cdot,T_j))\|_H > \sqrt{M^2 - \frac{1}{2}[\eta (\eps_0/2)]^2},
\end{equation}
where the function $\eta$ is the one given in Corollary \ref{small data preserve}, and choose
\[
 I'_j = [t_j^-, T_j] =\left\{\begin{array}{ll} [t_j^-, T_j], & \hbox{if}\; -t_{j,n}/\lambda_{j,n} \rightarrow t_j\in \Rm,\; \hbox{here we fix $t_j^- \in (-\infty, t_j)\cap I_j$}; \cr
 (-\infty,T_j], & \hbox{if}\; -t_{j,n}/\lambda_{j,n} \rightarrow -\infty.
 \end{array}\right.
\]
\end{itemize}
In summary, we always have $\|U_j\|_{Y(I'_j)} < \infty$. If $n$ is sufficiently large, we have $-t_{j,n}/\lambda_{j,n}$ is contained in the interior of $I'_j $ for all $j$. Without loss of generality, we can assume that this happens for all $j,n$.

\paragraph{Approximation Solutions} Now let us define
\[
 \bar{t}_n = \sup \left\{t >0 \left| t \in \cap_{j=1}^{J_0} \left(t_{j,n}+\lambda_{j,n} I'_j\right) \right.\right\}.
\]
This is either a positive number or undefined. The second case may happen only if all profiles $U_j$ scatter in the positive time direction. In this case we define $\bar{t}_n \equiv \infty$.  The definition actually implies
\[
 [0, \bar{t}_n]  \subseteq \cap_{j=1}^{\infty} \left(t_{j,n}+\lambda_{j,n} I'_j\right).
\]
According to the profile decomposition and Remark \ref{nonlinear and linear profiles are close}, we can write
\begin{equation} \label{initial value difference}
 (u_{0,n}, u_{1,n}) = \sum_{j=1}^J \left(U_{j,n}(\cdot,0), \partial_t U_{j,n}(\cdot,0)\right) + (w_{0,n}^J,w_{1,n}^J).
\end{equation}
with
\begin{align} \label{limit as J large}
 &\limsup_{n \rightarrow \infty} \left\|(w_{0,n}^J,w_{1,n}^J)\right\|_H \leq M;& &\limsup_{n \rightarrow  \infty} \left\|{\mathbf S}_{L,0} (t)(w_{0,n}^J,w_{1,n}^J)\right\|_{Y(\Rm)} \rightarrow 0 \;\hbox{as}\; J \rightarrow 0.&
\end{align}
Please note that this new error term $(w_{0,n}^J,w_{1,n}^J)$ is different from the one given in the linear profile decomposition. It also covers the error created by the substitution of the linear profiles with their nonlinear counterparts. In addition, the sum $S_{J,n} \doteq \sum_{j=1}^J U_{j,n}$ is a solution of the equation
\begin{equation} \label{equation for sum U}
 \partial_t^2 u - \Delta u = \phi F(u) + Err_{J,n}
\end{equation}
in the time interval $[0,\bar{t}_n]$. Here the error term $Err_{J,n}$ is defined by
\[
 Err_{J,n} = - \phi F\left(\sum_{j=1}^J U_{j,n}\right) + \sum_{j=1}^J \phi F(U_{j,n}) + \sum_{j=1}^J  \left[\partial_t^2 U_{j,n} - \Delta U_{j,n} - \phi F(U_{j,n})\right].
\]
By Lemma \ref{uniform estimate in J}, Lemma \ref{commutator} and the inequality \eqref{smaller Y}, we have
\begin{align}
 \lim_{n \rightarrow \infty} \|Err_{J,n}\|_{Z([0,\bar{t}_n])} & = 0; \label{control of error 1}\\
 \limsup_{n \rightarrow \infty} \left\|S_{J,n}\right\|_{Y([0,\bar{t}_n])}^p & \leq \sum_{j=1}^{J}\|U_j\|_{Y(I'_j)}^{p} \leq \sum_{j=1}^{J_0} \|U_j\|_{Y(I'_j)}^{p} + \sum_{j=J_0+1}^\infty  \|U_j\|_{Y(\Rm)}^{p} < \infty \label{uniform Y bound}.
\end{align}
The upper bound in the second line above is independent of $J$.
\begin{proposition} \label{existence of non-scattering profiles}
The exists at least one profile $U_j$ so that it does not scatter at the positive direction.
\end{proposition}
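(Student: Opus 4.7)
The plan is a standard contradiction argument based on the perturbation theory (Theorem \ref{perturbation theory in Ysp}). Suppose for the sake of contradiction that every nonlinear profile $U_j$ scatters in the positive time direction. Then in the Asymptotic Behaviour discussion each $j\le J_0$ falls under Case (I), so $I'_j$ contains a neighbourhood of $+\infty$; combined with the fact that $-t_{j,n}/\lambda_{j,n}$ lies in the interior of $I'_j$ for all large $n$, this forces $\bar{t}_n=+\infty$ for all sufficiently large $n$. Hence the approximate solution $S_{J,n}=\sum_{j=1}^{J}U_{j,n}$ satisfies \eqref{equation for sum U} on the whole half-line $[0,+\infty)$. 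Moreover, by \eqref{smaller Y} and \eqref{uniform Y bound} there is a finite constant
\[
  M_0 \doteq \left(\sum_{j=1}^{J_0}\|U_j\|_{Y(I'_j)}^{p} + \sum_{j=J_0+1}^{\infty}\|U_j\|_{Y(\Rm)}^{p}\right)^{1/p} + 1,
\]
independent of $J$, such that $\limsup_{n\to\infty}\|S_{J,n}\|_{Y([0,+\infty))}<M_0$ for every $J$.

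With $M_0$ in hand, apply Theorem \ref{perturbation theory in Ysp} to obtain $\eps_0=\eps_0(M_0,p)>0$. By \eqref{limit as J large} fix $J$ large enough that
\[
  \limsup_{n\to\infty}\left\|{\mathbf S}_{L,0}(t)(w_{0,n}^J,w_{1,n}^J)\right\|_{Y([0,+\infty))}<\eps_0/2.
\]
Then by \eqref{control of error 1} and \eqref{uniform Y bound} take $n$ large enough that simultaneously $\|Err_{J,n}\|_{Z([0,+\infty))}<\eps_0/2$ and $\|S_{J,n}\|_{Y([0,+\infty))}<M_0$. Since \eqref{initial value difference} identifies $u_n(\cdot,0)-S_{J,n}(\cdot,0)$ with $w_{0,n}^J$ (and similarly for the velocity component), the hypotheses of the perturbation theorem are met with $\tilde{u}=S_{J,n}$, $e=Err_{J,n}$ and $I=[0,+\infty)$. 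The theorem then produces a solution of (CP1) with initial data $(u_{0,n},u_{1,n})$ on $[0,+\infty)$ satisfying $\|u_n\|_{Y([0,+\infty))}\le M_0 + C(M_0,p)\eps_0<+\infty$. By uniqueness this is our $u_n$, so the standard blow-up criterion forces $T_n^+=+\infty$ and $u_n$ scatters in the positive time direction. This directly contradicts property (i) of Section \ref{sec: setup}, namely $\|u_n\|_{Y([0,T_n^+))}=+\infty$.

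The main delicate point is establishing the $J$-independent $Y$-bound on $S_{J,n}$: only such a bound permits one to fix $M_0$ once and for all and then extract $\eps_0$ from the perturbation theorem before choosing $J$ and $n$. This uniformity comes ultimately from the square-summability \eqref{norm of v} of the linear profile norms together with the small-data scattering estimate \eqref{smaller Y}, which together tame the infinite tail $\sum_{j>J_0}\|U_j\|_{Y(\Rm)}^{p}$. Once this is secured, the remainder of the argument — choosing $J$ to absorb the free-wave error and then $n$ to absorb the commutator-type error $Err_{J,n}$ — is routine.
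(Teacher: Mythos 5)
Your argument is correct and is essentially the paper's own: assume all profiles scatter, note that $\bar{t}_n=\infty$, obtain a $J$-independent $Y([0,\infty))$ bound on $S_{J,n}$ from \eqref{uniform Y bound}, and then apply the long-time perturbation theorem to the approximate solution $S_{J,n}$ with $J$ and $n$ chosen large enough to make the error terms small — concluding that $u_n$ has finite $Y$-norm on $[0,\infty)$, contradicting property (i). The only cosmetic difference is that you pick $J$ then $n$ explicitly, whereas the paper packages the same selection as a diagonal sequence $(J_k,n_k)$; your emphasis on the $J$-uniform bound being what lets one fix $\eps_0$ in advance is exactly the right point.
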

\begin{proof}
 If it were false, we would have $\bar{t}_n = \infty$ for all $n \in {\mathbb Z}^+$. We can choose a sequence $\{(J_k, n_k)\}_{k \in {\mathbb Z}^+}$, such that
\begin{align*}
 \lim_{k \rightarrow \infty} \left\|{\mathbf S}_{L,0} (t) (w_{0,n_k}^{J_k},w_{1,n_k}^{J_k})\right\|_{Y(\Rm)}  &= 0;\\
 \lim_{k \rightarrow \infty} \|Err_{J_k,n_k}\|_{Z([0,\infty))} & = 0;\\
 \left\|S_{J_k, n_k} \right\|_{Y([0,\infty))}^p &\leq \sum_{j=1}^{J_0} \|U_j\|_{Y(I'_j)}^{p} + \sum_{j=J_0+1}^\infty  \|U_j\|_{Y(\Rm)}^{p} + 1 < \infty.
\end{align*}
 Using the equation \eqref{initial value difference} and \eqref{equation for sum U}, we can apply the long-time perturbation theory on the approximation solutions $S_{J_k, n_k}$, the initial data $(u_{0,n_k}, u_{1,n_k})$ as well as the time interval $[0,\infty)$ and finally conclude that $u_{n_k}$ scatters in the positive time direction if $k$ is sufficiently large. This is a contradiction.
\end{proof}

Now we know $\bar{t}_n \in (0,\infty)$. In addition, for each large $n$, there is a $j \leq J_0$ such that $U_j$ does not scatter in the positive time direction with $\bar{t}_n = \lambda_{j,n} T_j + t_{j,n}$. Passing to a subsequence if necessary, we can assume that the same $j=j_0$ works for all sufficiently large $n$.

\begin{proposition} \label{almost orthogonality20}
The pairs $(U_{j_0,n}(\cdot, \bar{t}_n), \partial_t U_{j_0,n}(\cdot, \bar{t}_n))$ and $(U_{j,n}(\cdot, \bar{t}_n), \partial_t U_{j,n}(\cdot, \bar{t}_n))$ are almost orthogonal in the space $H = \dot{H}^{s_p} \times \dot{H}^{s_p-1}$ if $j \neq j_0$. Namely, we have
\begin{equation*}
\lim_{n \rightarrow \infty} \langle \left(U_{j_0,n}(\cdot, \bar{t}_n), \partial_t U_{j_0,n}(\cdot, \bar{t}_n)\right), \left(U_{j,n}(\cdot, \bar{t}_n), \partial_t U_{j,n}(\cdot, \bar{t}_n)\right)\rangle_H = 0.
\end{equation*}
\end{proposition}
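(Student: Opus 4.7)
The plan is to rewrite each entry of the inner product in the common form $\mathbf{S}_L(\bar t_n - t_{\ast,n})\Di_{\lambda_{\ast,n}}(P_\ast,Q_\ast)$ (modulo $o_H(1)$) for a fixed pair $(P_\ast,Q_\ast)\in H$, so that the statement reduces to Lemma \ref{almost orthogonality 4}. The algebraic tool is the scaling-invariance identity $\mathbf{S}_L(s)\Di_\lambda = \Di_\lambda \mathbf{S}_L(s/\lambda)$ for free waves. For the $j_0$-side, since $\bar t_n - t_{j_0,n}=\lambda_{j_0,n}T_{j_0}$ by the choice of $\bar t_n$ and $(W_0,W_1):=(U_{j_0}(\cdot,T_{j_0}),\partial_t U_{j_0}(\cdot,T_{j_0}))$ is a fixed element of $H$, the identity gives
\[
 (U_{j_0,n}(\cdot,\bar t_n),\partial_t U_{j_0,n}(\cdot,\bar t_n))=\mathbf{S}_L(\bar t_n - t_{j_0,n})\Di_{\lambda_{j_0,n}}(P_{j_0},Q_{j_0}),\qquad (P_{j_0},Q_{j_0}):=\mathbf{S}_L(-T_{j_0})(W_0,W_1).
\]
For the $j$-side, setting $\tau_{j,n}:=(\bar t_n - t_{j,n})/\lambda_{j,n}\in I'_j$, the same commutation yields
\[
 (U_{j,n}(\cdot,\bar t_n),\partial_t U_{j,n}(\cdot,\bar t_n))=\mathbf{S}_L(\bar t_n - t_{j,n})\Di_{\lambda_{j,n}}\mathbf{S}_L(-\tau_{j,n})(U_j(\cdot,\tau_{j,n}),\partial_t U_j(\cdot,\tau_{j,n})).
\]

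The crux of the argument is to produce, after extracting a subsequence, a fixed pair $(P_j,Q_j)\in H$ with $\mathbf{S}_L(-\tau_{j,n})(U_j(\cdot,\tau_{j,n}),\partial_t U_j(\cdot,\tau_{j,n}))\to (P_j,Q_j)$ in $H$. I would pass to a sub-subsequence so that $\tau_{j,n}\to\tau_\ast\in[-\infty,+\infty]$ and split into three cases. If $\tau_\ast$ is finite, necessarily $\tau_\ast\in I'_j\subset I_j$, and continuity of $U_j$ and of $\mathbf{S}_L$ in $H$ give $(P_j,Q_j)=\mathbf{S}_L(-\tau_\ast)(U_j(\cdot,\tau_\ast),\partial_t U_j(\cdot,\tau_\ast))$. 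If $\tau_\ast=+\infty$, the definition of $I'_j$ forces $U_j$ to scatter forward, and its scattering data $(v^+_{0,j},v^+_{1,j})\in H$ provides $(P_j,Q_j)$ via $\|(U_j(\cdot,t),\partial_t U_j(\cdot,t))-\mathbf{S}_L(t)(v^+_{0,j},v^+_{1,j})\|_H\to 0$. If $\tau_\ast=-\infty$, then necessarily $t_j=-\infty$ (otherwise $I'_j$ would be bounded below), and the defining asymptotic property of the nonlinear profile $U_j$ gives $(P_j,Q_j)=(v_{0,j},v_{1,j})$.

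Combining the two expressions and applying the isometry $\mathbf{S}_L(-\bar t_n)$ to both entries of the $H$-inner product, the claim reduces to
\[
 \lim_{n\to\infty}\left\langle \mathbf{S}_L(-t_{j_0,n})\Di_{\lambda_{j_0,n}}(P_{j_0},Q_{j_0}),\ \mathbf{S}_L(-t_{j,n})\Di_{\lambda_{j,n}}(P_j,Q_j)\right\rangle_H = 0.
\]
Since $j_0\neq j$, the pairs $\{(\lambda_{j_0,n},t_{j_0,n})\}$ and $\{(\lambda_{j,n},t_{j,n})\}$ obey the almost-orthogonality condition (ii) of Theorem \ref{profile decomposition}, and the argument given for Lemma \ref{almost orthogonality 4}—which uses only this orthogonality together with basic Fourier analysis on two pairs in $H$, and makes no use of any distinguishing feature of the linear-profile data—applies verbatim with $(P_{j_0},Q_{j_0})$ and $(P_j,Q_j)$ in place of $(v_{0,j_0},v_{1,j_0})$ and $(v_{0,j},v_{1,j})$. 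The one genuinely delicate point is the unbounded-$\tau_{j,n}$ case in the previous paragraph, where continuity of the nonlinear flow in $H$ is useless and one must instead invoke the asymptotic linearity of the nonlinear profile; the rest of the argument is a routine manipulation of the commutation identity and of the isometry properties of $\mathbf{S}_L(t)$ and $\Di_\lambda$.
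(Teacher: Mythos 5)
Your proof is correct, and while it reaches the same conclusion it organizes the argument somewhat differently from the paper. The paper rescales the $H$-inner product by $\Di_{1/\lambda_{j_0,n}}$, so that the $j_0$-side becomes the fixed pair $(U_{j_0}(\cdot,T_{j_0}),\partial_t U_{j_0}(\cdot,T_{j_0}))$, and then argues directly that the rescaled $j$-side, namely $\Di_{\lambda_{j,n}/\lambda_{j_0,n}}(U_j(\cdot,t'_n),\partial_t U_j(\cdot,t'_n))$, converges weakly to zero: since the almost-orthogonality forces either the scale ratio or $|t'_n|$ to diverge, this follows from the continuity of the nonlinear flow in $H$ together with scattering in the direction in which $I'_j$ is unbounded. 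You instead exploit the intertwining relation $\mathbf{S}_L(s)\Di_\lambda=\Di_\lambda\mathbf{S}_L(s/\lambda)$ to put both entries into the normal form $\mathbf{S}_L(-t_{\ast,n})\Di_{\lambda_{\ast,n}}(P_\ast,Q_\ast)$ (up to $o_H(1)$), where the fixed pairs $(P_\ast,Q_\ast)$ are produced by a three-way case analysis on the limit $\tau_\ast$ of $\tau_{j,n}$, and then cite Lemma \ref{almost orthogonality 4} verbatim. The two arguments use the same essential ingredients (condition (ii), weak convergence under diverging dilations and time translations, and the scattering of $U_j$ in directions where $I'_j$ is unbounded), but yours has the tidiness of reducing to an already-proven lemma, at the cost of a subsequence extraction and an explicit identification of the limiting data.

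Two minor points worth recording. First, after passing to a subsequence along which $\tau_{j,n}\to\tau_\ast$ you should note that the full limit follows because every subsequence of the original sequence admits a further subsequence along which the above argument applies, so the inner products cannot have a nonzero accumulation point. Second, your claim that $\tau_\ast=-\infty$ forces $t_j=-\infty$ relies on the fact that $\bar t_n\geq 0$ (so $\tau_{j,n}\geq -t_{j,n}/\lambda_{j,n}$), which is worth stating, since it is this inequality that rules out $\tau_\ast=-\infty$ when $t_j$ is finite and $I'_j$ is bounded below.
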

\begin{proof} We have
\begin{align*}
  (U_{j_0,n}(\bar{t}_n), \partial_t U_{j_0,n}(\bar{t}_n)) & = \left(\frac{1}{\lambda_{j_0,n}^{3/2 - s_p}} U_{j_0}\left(\frac{x}{\lambda_{j_0,n}}, T_{j_0}\right), \frac{1}{\lambda_{j,n}^{5/2 -s_p}} \partial_t U_{j_0}\left(\frac{x}{\lambda_{j_0,n}}, T_{j_0}\right)\right);\\
  (U_{j,n}(\bar{t}_n), \partial_t U_{j,n}(\bar{t}_n)) & = \left(\frac{1}{\lambda_{j,n}^{3/2 - s_p}} U_j\left(\frac{x}{\lambda_{j,n}}, \frac{\bar{t}_n -t_{j,n}}{\lambda_{j,n}} \right),\frac{1}{\lambda_{j,n}^{5/2 -s_p}} \partial_t U_{j}\left(\frac{x}{\lambda_{j,n}}, \frac{\bar{t}_n -t_{j,n}}{\lambda_{j,n}}\right)\right).
\end{align*}
Since the dot product is dilation-invariant, we can rewrite the inner product in question into
\[
 \left\langle \left(U_{j_0}(x, T_{j_0}), \partial_t U_{j_0}(x,T_{j_0})\right), \left(\left(\frac{\lambda_{j_0,n}}{\lambda_{j,n}}\right)^{\frac{3}{2} - s_p}\!\!\!\! U_j\left(\frac{\lambda_{j_0,n} x}{\lambda_{j,n}}, t'_n \right), \left(\frac{\lambda_{j_0,n}}{\lambda_{j,n}}\right)^{\frac{5}{2} - s_p} \!\!\!\! \partial_t U_j\left(\frac{\lambda_{j_0,n} x}{\lambda_{j,n}}, t'_n \right) \right) \right\rangle.
\]
Here $\displaystyle t'_n = \frac{\bar{t}_n -t_{j,n}}{\lambda_{j,n}} = \frac{T_{j_0} \lambda_{j_0,n} + t_{j_0,n} - t_{j,n}}{\lambda_{j,n}} \in I'_j$. By the inequality
\[
 |t'_n | \geq - \frac{\lambda_{j_0,n}}{\lambda_{j,n}} |T_{j_0}| +  \frac{| t_{j_0,n} - t_{j,n}|}{\lambda_{j,n}}
\]
and the almost orthogonal condition
\[
 \lim_{n \rightarrow \infty} \left(\frac{\lambda_{j_0,n}}{\lambda_{j,n}} + \frac{\lambda_{j,n}}{\lambda_{j_0,n}} + \frac{| t_{j_0,n} - t_{j,n}|}{\lambda_{j,n}}\right) = + \infty,
\]
we have
\[
 \lim_{n \rightarrow \infty} \left(\frac{\lambda_{j_0,n}}{\lambda_{j,n}} + \frac{\lambda_{j,n}}{\lambda_{j_0,n}} + |t'_n|\right) = + \infty.
\]
Combining this limit with the facts
\begin{itemize}
  \item Each $t'_n$ is contained in the closed interval $I'_j$;
  \item $(U_j (\cdot,t), \partial_t U_j (\cdot, t)) \in C(I'_j; \dot{H}^{s_p} \times \dot{H}^{s_p-1})$;
  \item $U_j (t)$ always scatters in the corresponding time direction whenever $I'_j$ contains a neighbourhood of $\infty$ or $-\infty$;
\end{itemize}
we obtain that the second factor in pairing above converges weakly to zero in $\dot{H}^{s_p} \times \dot{H}^{s_p-1}$. This finishes the proof.
\end{proof}

\paragraph{Approximation Solutions without $U_{j_0,n}$} Now let us define ($J \geq J_0$)
\[
 S'_{J,n} = \sum_{1\leq j\leq J, j\neq j_0} U_{j,n}.
\]
The function $S'_{J,n}$ is the solution to
\begin{equation} \label{approximation equation 2}
 \partial_t^2 u - \Delta u = \phi F(u) + Err'_{J,n}
\end{equation}
The error term is given by
\[
 Err'_{J,n} = - \phi F\left(S'_{J,n}\right) + \sum_{1\leq j\leq J, j\neq j_0} \phi F(U_{j,n}) +  \sum_{1\leq j\leq J, j\neq j_0}  \left[\partial_t^2 U_{j,n} - \Delta U_{j,n} - \phi F(U_{j,n})\right].
\]
By Remark \ref{arbitrary subsequence},  and Lemma \ref{commutator}, we have
\begin{align}
 \lim_{n \rightarrow \infty} \|Err'_{J,n}\|_{Z([0,\bar{t}_n])} & = 0; \label{control of error 2}\\
 \limsup_{n \rightarrow \infty} \left\|S'_{J,n} \right\|_{Y([0,\bar{t}_n])}^p & \leq \sum_{1\leq j\leq J, j \neq j_0}\|U_j\|_{Y(I'_j)}^{p} \leq \sum_{j=1}^{J_0} \|U_j\|_{Y(I'_j)}^{p} + \sum_{j=J_0+1}^\infty  \|U_j\|_{Y(\Rm)}^{p} < \infty \label{uniform Y bound 2}.
\end{align}

\paragraph{Choice of $n(J)$} For each $J > J_0$, we can choose a large positive integer $n(J)$ so that (See \eqref{limit as J large}, \eqref{control of error 1}, \eqref{uniform Y bound}, \eqref{control of error 2}, \eqref{uniform Y bound 2}, Lemma \ref{upper bound on tail} and Lemma \ref{almost orthogonality20})
\begin{align}
& n(J) > J; \\
&\left\|S_{J, n(J)}\right\|_{Y([0,\bar{t}_{n(J)}])} \leq \left(\sum_{j=1}^{J_0} \|U_j\|_{Y(I'_j)}^{p} + \sum_{j=J_0+1}^\infty  \|U_j\|_{Y(\Rm)}^{p} +1\right)^{1/p}; \label{uniform Y bound 31} \\
&\|S'_{J,n(J)} \|_{Y([0,\bar{t}_{n(J)}])} \leq \left(\sum_{j=1}^{J_0} \|U_j\|_{Y(I'_j)}^{p} + \sum_{j=J_0+1}^\infty  \|U_j\|_{Y(\Rm)}^{p} +1\right)^{1/p}; \label{uniform Y bound 3}\\
%&\|S_{J,n(J)} \|_{H} \geq \eps_0;\\
&\left\|\sum_{j=J_0+1}^J \left(U_{j,n(J)} (\cdot, 0), \partial_t U_{j,n(J)} (\cdot,0)\right)\right\|_H \leq \frac{\eps_0}{3}; \label{upper bound on tail 2} \\
&\|Err_{J,n(J)}\|_{Z([0,\bar{t}_{n(J)}])} \leq 2^{-J}; \label{control of error 31}\\
&\|Err'_{J,n(J)}\|_{Z([0,\bar{t}_{n(J)}])}\leq 2^{-J}; \label{control of error 3}\\
&\left|\left\langle \begin{pmatrix} U_{j_0,n(J)}(\cdot, \bar{t}_{n(J)})\\ \partial_t U_{j_0,n(J)}(\cdot, \bar{t}_{n(J)})\end{pmatrix}, \begin{pmatrix} U_{j,n(J)}(\cdot, \bar{t}_{n(J)})\\ \partial_t U_{j,n(J)}(\cdot, \bar{t}_{n(J)})\end{pmatrix}\right\rangle_H\right| \leq \frac{2^{-J}}{J},\; \hbox{if}\; 1\leq j\leq J,\; j \neq j_0;\label{almost orthogonality 5} \\
&\lim_{J \rightarrow \infty} \left\|{\mathbf S}_{L,0} (t)(w_{0,n(J)}^J, w_{1,n(J)}^J)\right\|_{Y(\Rm)} = 0; \label{limit of Y norm}\\
&\left\|(w_{0,n(J)}^J, w_{1,n(J)}^J)\right\|_H \leq M + 1. \label{uniform bound of w}
\end{align}
Combining the equation \eqref{initial value difference}, \eqref{equation for sum U} and the inequalities \eqref{uniform Y bound 31}, \eqref{control of error 31}, \eqref{limit of Y norm}, we can apply the long-time perturbation theory on the approximation solution $S_{J,n(J)}$, the initial data $(u_{0, n(J)},u_{1,n(J)})$ as well as the time interval $[0,\bar{t}_{n(J)}]$, conclude that $\bar{t}_{n(J)}$ is in the maximal lifespan of $u_{n(J)}$ and
\[
 \lim_{J \rightarrow \infty} \left\|u_{n(J)} - S_{J, n(J)} \right\|_{Y ([0,\bar{t}_{n(J)}])} = 0.
\]
\[
 \lim_{J \rightarrow \infty} \left\|\begin{pmatrix} u_{n(J)} (\cdot, \bar{t}_{n(J)})\\ \partial_t u_{n(J)} (\cdot, \bar{t}_{n(J)})\end{pmatrix} -
 \begin{pmatrix} S_{J,n(J)} (\cdot, \bar{t}_{n(J)})\\ \partial_t S_{J,n(J)}(\cdot, \bar{t}_{n(J)})\end{pmatrix}- {\mathbf S}_L (\bar{t}_{n(J)}) \begin{pmatrix} w_{0,n(J)}^J\\ w_{1,n(J)}^J\end{pmatrix} \right\|_H = 0,
\]
if $J$ is sufficiently large. Therefore we have
\begin{align}
 & \limsup_{J \rightarrow \infty} \left\|\left(u_{n(J)} (\cdot, \bar{t}_{n(J)}), \partial_t u_{n(J)} (\cdot, \bar{t}_{n(J)})\right)\right\|_H\nonumber\\
  &\qquad\qquad =  \limsup_{J \rightarrow \infty} \left\| \begin{pmatrix} S_{J,n(J)} (\cdot, \bar{t}_{n(J)})\\ \partial_t S_{J,n(J)}(\cdot, \bar{t}_{n(J)})\end{pmatrix} + {\mathbf S}_L (\bar{t}_{n(J)}) \begin{pmatrix} w_{0,n(J)}^J\\ w_{1,n(J)}^J\end{pmatrix}\right\|_H.\label{identity 101}
\end{align}
By \eqref{limit of Y norm}, \eqref{uniform bound of w}, Lemma \ref{almost orthogonal of w} and the identity
\[
 \begin{pmatrix}  U_{j_0,n(J)}(\cdot, \bar{t}_{n(J)})\\  \partial_t U_{j_0, n(J)}(\cdot, \bar{t}_{n(J)})\end{pmatrix} = \Di_{\lambda_{j_0,n(J)}} \begin{pmatrix}  U_{j_0} \left(\cdot, T_{j_0}\right)\\  \partial_t U_{j_0}\left(\cdot, T_{j_0}\right)\end{pmatrix},
\]
We have
\[
 \lim_{J \rightarrow \infty} \left\langle  \begin{pmatrix}  U_{j_0,n(J)}(\cdot, \bar{t}_{n(J)})\\  \partial_t U_{j_0, n(J)}(\cdot, \bar{t}_{n(J)})\end{pmatrix},{\mathbf S}_L (\bar{t}_{n(J)}) \begin{pmatrix} w_{0,n(J)}^J\\ w_{1,n(J)}^J\end{pmatrix} \right\rangle_H  = 0.
\]
Combining this with \eqref{almost orthogonality 5} and \eqref{identity 101}, we obtain
\begin{align}
 &\limsup_{J \rightarrow \infty} \left\|\left(u_{n(J)} (\cdot, \bar{t}_{n(J)}), \partial_t u_{n(J)} (\cdot, \bar{t}_{n(J)})\right)\right\|_H^2 \nonumber\\
 = &  \limsup_{J \rightarrow \infty} \left(\left\| \begin{pmatrix} U_{j_0,n(J)}(\cdot, \bar{t}_{n(J)})\\  \partial_t U_{j_0, n(J)}(\cdot, \bar{t}_{n(J)})\end{pmatrix} \right\|_H^2 + \left\| \begin{pmatrix}
S'_{J,n(J)}(\cdot, \bar{t}_{n(J)})\\  \partial_t S'_{J,n(J)}(\cdot, \bar{t}_{n(J)}) \end{pmatrix}+ {\mathbf S}_L (\bar{t}_{n(J)}) \begin{pmatrix} w_{0,n(J)}^J\\ w_{1,n(J)}^J\end{pmatrix} \right\|_H^2 \right)\nonumber\\
 = &  \left\| \begin{pmatrix}  U_{j_0} \left(\cdot, T_{j_0}\right)\\  \partial_t U_{j_0}\left(\cdot, T_{j_0}\right)\end{pmatrix}\right\|_H^2+   \limsup_{J \rightarrow \infty} \left\| \begin{pmatrix}
S'_{J,n(J)}(\cdot, \bar{t}_{n(J)})\\  \partial_t S'_{J,n(J)}(\cdot, \bar{t}_{n(J)}) \end{pmatrix}+ {\mathbf S}_L (\bar{t}_{n(J)}) \begin{pmatrix} w_{0,n(J)}^J\\ w_{1,n(J)}^J\end{pmatrix}\right\|_H^2. \label{lower bound of norm 1}
\end{align}
We still need to find a lower bound on the second term above. First of all, let us consider the initial data
\begin{equation} \label{initial data 2}
 \begin{pmatrix} u'_{0, J}\\ u'_{1,J} \end{pmatrix} \doteq \begin{pmatrix} S'_{J, n(J)} (\cdot, 0)\\ \partial_t S'_{J,n(J)}(\cdot, 0)\end{pmatrix} + \begin{pmatrix} w_{0,n(J)}^J\\ w_{1,n(J)}^J\end{pmatrix}
\end{equation}
and define $u'_J$ to be corresponding solution to (CP1). According to \eqref{upper bound on tail 2} we have
\begin{equation} \label{lower bound on initial data 1}
 \left\|\begin{pmatrix} u'_{0, J}\\ u'_{1,J} \end{pmatrix}\right\|_H  \geq \left\|\begin{pmatrix} S'_{J_0, n(J)} (\cdot, 0)\\ \partial_t S'_{J_0,n(J)}(\cdot, 0)\end{pmatrix} + \begin{pmatrix} w_{0,n(J)}^J\\ w_{1,n(J)}^J\end{pmatrix}\right\|_H - \frac{\eps_0}{3}.
\end{equation}
Observing \eqref{limit of Y norm}, \eqref{uniform bound of w} and the identity $\left(\frac{- t_{j,n(J)}}{\lambda_{j,n(J)}} \in I'_j\right)$
\[
  \begin{pmatrix} S'_{J_0, n(J)} (\cdot, 0)\\ \partial_t S'_{J_0, n(J)}(\cdot, 0) \end{pmatrix} = \sum_{1\leq j\leq J_0, j \neq j_0} \Di_{\lambda_{j,n(J)}}  \begin{pmatrix}  U_{j} \left(\cdot, \frac{- t_{j,n(J)}}{\lambda_{j,n(J)}}\right)\\  \partial_t U_{j}\left(\cdot, \frac{ - t_{j,n(J)}}{\lambda_{j,n(J)}}\right)\end{pmatrix},
\]
we are able to apply Lemma \ref{almost orthogonal of w} and conclude
\[
 \lim_{J \rightarrow \infty} \left\langle \begin{pmatrix} S'_{J_0, n(J)} (\cdot, 0)\\ \partial_t S'_{J_0,n(J)}(\cdot, 0)\end{pmatrix}, \begin{pmatrix} w_{0,n(J)}^J\\ w_{1,n(J)}^J\end{pmatrix} \right\rangle_H = 0.
\]
Next we combine this limit, Remark \ref{norm of partial initial data}, the definition \eqref{definition of eps0} and obtain
\begin{align*}
\liminf_{J \rightarrow \infty} \left\|\begin{pmatrix} S'_{J_0, n(J)} (\cdot, 0)\\ \partial_t S'_{J_0,n(J)}(\cdot, 0)\end{pmatrix} + \begin{pmatrix} w_{0,n(J)}^J\\ w_{1,n(J)}^J\end{pmatrix}\right\|_H^2 & \geq \liminf_{J \rightarrow \infty} \left\|\begin{pmatrix} S'_{J_0, n(J)} (\cdot, 0)\\ \partial_t S'_{J_0,n(J)}(\cdot, 0)\end{pmatrix} \right\|_H^2\\
& = \sum_{1\leq j \leq J_0, j\neq j_0} \|(v_{j,0}, v_{j,1})\|_H^2 \geq \eps_0^2.
\end{align*}
Plugging this lower bound into the inequality \eqref{lower bound on initial data 1}, we have
\[
 \displaystyle \liminf_{J \rightarrow \infty} \|(u'_{0,J}, u'_{1,J})\|_H \geq \frac{2\eps_0}{3}.
\]
According to Corollary \ref{small data preserve}, this implies
\begin{equation} \label{lower bound on bar u}
  \liminf_{J \rightarrow \infty} \left(\inf_{t} \left\|\left(u'_{J} (\cdot, t), \partial_t u'_{J} (\cdot, t)\right)\right\|_H \right) \geq \eta(\eps/2).
\end{equation}
Recalling the fact that $S'_{J,n}$ satisfies the equation \eqref{approximation equation 2} and combining this with the identity \eqref{initial data 2}, the estimates \eqref{uniform Y bound 3}, \eqref{control of error 3}, \eqref{limit of Y norm}, we can apply the long-time perturbation theory on the approximation solution $S'_{J,n(J)}$, the initial data $(u'_{0,J}, u'_{1,J})$ as well as the time interval $[0, \bar{t}_{n(J)}]$, and conclude that $\bar{t}_{n(J)}$ is contained in the maximal lifespan of $u'_J$ for large $J$ with
\begin{equation}
  \lim_{J \rightarrow \infty} \left\|\begin{pmatrix} u'_{J} (\cdot, \bar{t}_{n(J)})\\ \partial_t u'_{J} (\cdot, \bar{t}_{n(J)})\end{pmatrix} -
 \begin{pmatrix} S'_{J,n(J)} (\cdot, \bar{t}_{n(J)})\\ \partial_t S'_{J,n(J)} (\cdot, \bar{t}_{n(J)})\end{pmatrix}  - {\mathbf S}_L (\bar{t}_{n(J)}) \begin{pmatrix} w_{0,n(J)}^J\\ w_{1,n(J)}^J\end{pmatrix} \right\|_H = 0,
\end{equation}
This implies
\begin{align*}
  \limsup_{J \rightarrow \infty} \left\| \begin{pmatrix}
S'_{J,n(J)}(\cdot, \bar{t}_{n(J)})\\  \partial_t S'_{J,n(J)}(\cdot, \bar{t}_{n(J)}) \end{pmatrix}+ {\mathbf S}_L (\bar{t}_{n(J)}) \begin{pmatrix} w_{0,n(J)}^J\\ w_{1,n(J)}^J\end{pmatrix}\right\|_H^2 & =  \limsup_{J \rightarrow \infty} \left\|\begin{pmatrix} u'_{J} (\cdot, \bar{t}_{n(J)})\\ \partial_t u'_{J} (\cdot, \bar{t}_{n(J)})\end{pmatrix}\right\|_H^2\\
& \geq [\eta(\eps_0/2)]^2.
\end{align*}
In the last step above, we use the lower limit \eqref{lower bound on bar u}. Combining this with \eqref{bad point 1} and \eqref{lower bound of norm 1}, we finally obtain
\[
 \limsup_{J \rightarrow \infty} \left\|\left(u_{n(J)} (\cdot, \bar{t}_{n(J)}), \partial_t u_{n(J)} (\cdot, \bar{t}_{n(J)})\right)\right\|_H^2 \geq
  M^2 - \frac{1}{2}[\eta (\eps_0/2)]^2 + [\eta(\eps_0/2)]^2 > M^2.
\]
This contradicts with our assumption (iii) on $u_n$. (Please see Subsection \ref{sec: setup})

\subsection{Extraction of the Critical Element}

At this point there is only one nonzero profile $U_1$ with a maximal lifespan $I_1$. The profile decomposition can be rewritten into
\begin{align}
 (u_{0,n}, u_{1,n}) & = \left(U_{1,n} \left(\cdot, 0\right), \partial_t U_{1,n} \left(\cdot , 0\right)\right) + (w_{0,n}, w_{1,n}) \label{single profile decomposition}\\
 & = \left(\frac{1}{\lambda_{1,n}^{3/2-s_p}} U_{1} \left(\frac{\cdot}{\lambda_{1,n}}, \frac{-t_{1,n}}{\lambda_{1,n}}\right), \frac{1}{\lambda_{1,n}^{5/2-s_p}} \partial_t U_{1} \left(\frac{\cdot}{\lambda_{1,n}}, \frac{-t_{1,n}}{\lambda_{1,n}}\right)\right) + (w_{0,n}, w_{1,n}). \nonumber
\end{align}
Here the nonlinear profile $U_1$ and the error terms $(w_{0,n}, w_{1,n})$ satisfy
\begin{align}
 &\limsup_{n \rightarrow \infty} \|(w_{0,n},w_{1,n})\|_H \leq M; \label{limit of H norm of w}\\
 &\lim_{n \rightarrow \infty} \|{\mathbf S}_{L,0} (t)(w_{0,n}, w_{1,n})\|_{Y(\Rm)} = 0;\label{limit of Ystar zero} \\
 &\lim_{t \rightarrow t_1} \|(U_1(\cdot, t), \partial_t U_1 (\cdot, t))\|_H = \|(v_{0,1}, v_{1,1})\|_H \leq M. \label{upper bound on norm of u1 1}
\end{align}
In addition, the approximation solution
\[
 U_{1,n}(x,t) = \frac{1}{\lambda_{1,n}^{3/2-s_p}} U_{1} \left(\frac{x}{\lambda_{1,n}}, \frac{t-t_{1,n}}{\lambda_{1,n}}\right), \quad (x, t) \in \Rm^3 \times (t_{1,n} + \lambda_{1,n} I_1)
\]
satisfies the equation
\begin{equation} \label{approximation equation 11}
 \partial_t^2 u - \Delta u = \phi F(u) + Err_{1,n},
\end{equation}
so that if $I'_1 \subseteq I_1$ is any time interval with $\|U_1\|_{Y(I'_1)} < \infty$, then we have
\begin{align}
 & \|U_{1,n}\|_{Y(t_{1,n} + \lambda_{1,n} I'_1)}  = \|U_1\|_{Y(I'_1)} < \infty,  \label{upper bound of Y norm 11}\\
 &\lim_{n \rightarrow \infty} \|Err_{1,n}\|_{Z(t_{1,n} + \lambda_{1,n} I'_1)} = 0.\label{control of error 11}
\end{align}
It turns out that the nonlinear profile $U_1$ is exactly the critical element we are looking for. Let us first consider its asymptotic behaviour.

\paragraph{Failure of scattering in both two time directions} We have already know that $U_1$ fails to scatter in the positive time direction by Proposition \ref{existence of non-scattering profiles}. The way in which a nonlinear profile is defined implies that $t_1 = \lim_{n \rightarrow \infty} -t_{1,n}/\lambda_{1,n} < +\infty$. We still need to consider the asymptotic behaviour of $U_1$ in the negative time direction. If $U_1$ scattered in the negative time direction, we could choose an interval $I'_1 = (-\infty, t_1^+] \subset I_1$, so that
\begin{itemize}
 \item $t_1^+ > t_1$, thus $(-\infty,0] \subseteq t_{1,n} + \lambda_{1,n} I'_1$ holds for each sufficiently large $n$;
 \item $\|U_1\|_{Y(I'_1)} < + \infty$.
\end{itemize}
Combining these with the profile decomposition \eqref{single profile decomposition}, the fact that $U_{1,n}$ satisfies the equation \eqref{approximation equation 11}, the inequality \eqref{upper bound of Y norm 11}, and the limits \eqref{limit of Ystar zero}, \eqref{control of error 11},  we are able to apply the long-time perturbation theory on the approximation solution $U_{1,n}$, the initial data $(u_{0,n}, u_{1,n})$ and the time interval $(-\infty,0]$, finally to conclude that $(-\infty,0]$ is contained in the maximal lifespan of $u_n$ if $n$ is large with
\[
 \lim_{n \rightarrow \infty} \|u_n - U_{1,n}\|_{Y((-\infty,0])} = 0.
\]
This means that $\displaystyle \limsup_{n \rightarrow \infty} \|u_n\|_{Y((-\infty,0])} \leq \|U_1\|_{Y(I'_1)} < \infty$. This contradicts our consumption (iii) in Subsection \ref{sec: setup}. One direct corollary is that $t_1$ is finite.

\paragraph{Upper bound on the Sobolev norm} Since $t_1$ is finite, we have $ \|(U_1(\cdot, t_1), \partial_t U_1 (\cdot, t_1))\|_H \leq M$ by \eqref{upper bound on norm of u1 1}. Now we claim
\begin{equation} \label{uniform H bound for U1}
 \sup_{t \in I_1} \|(U_1(\cdot, t), \partial_t U_1 (\cdot, t))\|_H \leq M.
\end{equation}
In fact, this is a direct corollary of the following lemma.
\begin{lemma} \label{orthogonality 01}
Given any time $T \in I_1$, we have
\[
 \left\|\left( U_{1} (\cdot, T), \partial_t U_{1} (\cdot, T)\right)\right\|_H^2 + \limsup_{n \rightarrow \infty} \left\|\left(w_{0,n}, w_{1,n}\right)\right\|_H^2 \leq M^2.
\]
This is equivalent to
\[
  \left(\sup_{t \in I_1}\left\|\left( U_{1} (\cdot, t), \partial_t U_{1} (\cdot, t)\right)\right\|_H\right)^2 + \limsup_{n \rightarrow \infty} \left\|\left(w_{0,n}, w_{1,n}\right)\right\|_H^2 \leq M^2.
\]
\end{lemma}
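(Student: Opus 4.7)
The plan is to evaluate both sides of the inequality at the shifted time $\bar t_n = t_{1,n} + \lambda_{1,n} T$, which is the image of $T$ under the profile rescaling. I will show that at $t=\bar t_n$ the actual solution $u_n$ is well approximated by $U_{1,n}(\cdot,\bar t_n) + \mathbf{S}_L(\bar t_n)(w_{0,n},w_{1,n})$, then invoke almost orthogonality between these two pieces to split the squared $H$-norm, and finally use property (iii) of Subsection \ref{sec: setup} to bound the left-hand side by $M + 2^{-n}$.

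Since $t_1$ is finite (as established just before the lemma) and $T, t_1 \in I_1$, I first pick a closed bounded subinterval $I_1' \subseteq I_1$ whose interior contains both $T$ and $t_1$, with $\|U_1\|_{Y(I_1')} < \infty$. Because $-t_{1,n}/\lambda_{1,n} \to t_1$, for every large $n$ the interval with endpoints $0$ and $\bar t_n$ lies inside $t_{1,n} + \lambda_{1,n} I_1'$. Combining \eqref{single profile decomposition}, \eqref{approximation equation 11}, \eqref{upper bound of Y norm 11}, \eqref{control of error 11}, and \eqref{limit of Ystar zero}, the long-time perturbation theory (Theorem \ref{perturbation theory in Ysp}) applied to the approximation solution $U_{1,n}$ with free-wave correction $\mathbf{S}_L(t)(w_{0,n},w_{1,n})$ shows that $\bar t_n$ lies in the maximal lifespan of $u_n$ for large $n$ and
\[
 \left\|\begin{pmatrix} u_n(\cdot,\bar t_n)\\ \partial_t u_n(\cdot,\bar t_n)\end{pmatrix} - \begin{pmatrix} U_{1,n}(\cdot,\bar t_n)\\ \partial_t U_{1,n}(\cdot,\bar t_n)\end{pmatrix} - \mathbf{S}_L(\bar t_n)\begin{pmatrix} w_{0,n}\\ w_{1,n}\end{pmatrix}\right\|_H \longrightarrow 0.
\]
When $\bar t_n \geq 0$, property (iii) immediately gives $\|(u_n(\bar t_n),\partial_t u_n(\bar t_n))\|_H < M + 2^{-n}$. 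When $\bar t_n<0$, the same perturbation estimate yields $\|u_n\|_{Y([\bar t_n,0])} \to \|U_1\|_{Y([T,t_1])} < \infty$, hence $\|u_n\|_{Y([\bar t_n,0])}\leq 2^n$ for large $n$, so property (iii) applies once more.

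It only remains to expand the squared norm of the main approximation. Since $(U_{1,n}(\cdot,\bar t_n),\partial_t U_{1,n}(\cdot,\bar t_n)) = \Di_{\lambda_{1,n}}(U_1(\cdot,T), \partial_t U_1(\cdot,T))$ and both $\Di_{\lambda_{1,n}}$ and $\mathbf{S}_L(\bar t_n)$ are isometries on $H$, the diagonal contributions equal $\|(U_1(T),\partial_t U_1(T))\|_H^2$ and $\|(w_{0,n},w_{1,n})\|_H^2$. For the cross term I set $\tilde w_n = \mathbf{S}_L(\bar t_n)(w_{0,n},w_{1,n})$: this sequence is uniformly bounded in $H$, and by time-translation invariance of the Strichartz norm, $\|\mathbf{S}_{L,0}(t)\tilde w_n\|_{Y(\Rm)} = \|\mathbf{S}_{L,0}(t)(w_{0,n},w_{1,n})\|_{Y(\Rm)} \to 0$. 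Hence Lemma \ref{almost orthogonal of w}, applied with the constant sequence $t_n \equiv T$, with scales $\lambda_n = \lambda_{1,n}$ and with the profile $(U_1,\partial_t U_1) \in C(I_1';H)$, gives $\langle \Di_{\lambda_{1,n}}(U_1(T),\partial_t U_1(T)), \tilde w_n\rangle_H \to 0$. Passing $n \to \infty$ then produces
\[
 \|(U_1(T),\partial_t U_1(T))\|_H^2 + \limsup_{n\to\infty}\|(w_{0,n},w_{1,n})\|_H^2 = \limsup_{n\to\infty}\|(u_n(\bar t_n),\partial_t u_n(\bar t_n))\|_H^2 \leq M^2,
\]
which is the desired estimate. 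The only step requiring genuine care is the case $\bar t_n<0$, where property (iii) is gated by a $Y$-norm condition that must itself be certified via the perturbation estimate; everything else is a direct assembly of Theorem \ref{perturbation theory in Ysp} and Lemma \ref{almost orthogonal of w}.
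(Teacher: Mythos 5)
Your proof is correct and mirrors the paper's argument step by step: both evaluate at the rescaled time $\bar t_n = t_{1,n}+\lambda_{1,n}T$, invoke the long-time perturbation theory to approximate $(u_n(\bar t_n),\partial_t u_n(\bar t_n))$ by $\Di_{\lambda_{1,n}}(U_1(T),\partial_t U_1(T)) + \mathbf{S}_L(\bar t_n)(w_{0,n},w_{1,n})$, verify the $Y$-norm gate on property (iii), and split the squared $H$-norm via the isometry of $\Di_{\lambda_{1,n}}$, $\mathbf{S}_L(\bar t_n)$ and Lemma \ref{almost orthogonal of w}. The only cosmetic difference is that the paper reduces to the case $T<t_1$ (so $\bar t_n<0$) and appeals to symmetry and continuity for $T\geq t_1$, whereas you pick a single closed $I_1'$ containing both $T$ and $t_1$ in its interior and split on the sign of $\bar t_n$.
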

\begin{proof}
We only need to consider the case $T < t_1$. Since we can deal with the opposite case $T>t_1$ in the same way and finally the case $T=t_1$ by continuity. We started by picking up a time $t_1^+ \in (t_1, \infty) \cap I_1$ and choosing $I'_1 = [T,t_1^+] \subset I_1$. Therefore we have $\|U_1\|_{Y(I'_1)} < \infty$. One can also check that $\bar{t}_n \doteq \lambda_{1,n} T + t_{1,n} < 0$ and $[\bar{t}_n,0] \subseteq t_{1,n} + \lambda_{1,n} I'_1$ hold if $n$ is sufficiently large. As a result, we have
\begin{equation} \label{Y bound of u1n}
 \|U_{1,n}\|_{Y([\bar{t}_n,0])} \leq \|U_1\|_{Y(I'_1)} < \infty.
\end{equation}
Now we are able to apply the long-time perturbation theory on the approximation solution $U_{1,n}$, initial data $(u_{0,n},u_{1,n})$ and the time interval $[\bar{t}_n,0]$ if $n$ is large by the profile decomposition \eqref{single profile decomposition}, the fact that $U_{1,n}$ satisfies the approximation equation \eqref{approximation equation 11},  the limits \eqref{limit of Ystar zero}, \eqref{control of error 11} and the inequality \eqref{Y bound of u1n}. We conclude that $[\bar{t}_n,0]$ is contained in the lifespan of $u_n$ if $n$ is large and
\begin{align}
 &\lim_{n \rightarrow \infty} \left\|u_n - U_{1,n} \right\|_{Y([\bar{t}_n,0])} = 0;&\label{near time zero} \\
 &\lim_{n \rightarrow \infty} \left\|\begin{pmatrix} u_n (\cdot, \bar{t}_n)\\ \partial_t u_n (\cdot, \bar{t}_n)\end{pmatrix} - \begin{pmatrix} U_{1,n} (\cdot, \bar{t}_n)\\ \partial_t U_{1,n} (\cdot, \bar{t}_n)\end{pmatrix} - {\mathbf S}_L (\bar{t}_n)\begin{pmatrix} w_{0,n} \\ w_{1,n}\end{pmatrix} \right\|_H = 0.& \label{H norm estimate 1}
\end{align}
Next we combine the limit \eqref{near time zero} with the inequality \eqref{Y bound of u1n} and obtain
\[
 \limsup_{n \rightarrow \infty} \|u_n\|_{Y([\bar{t}_n,0])} = \limsup_{n \rightarrow \infty} \|U_{1,n}\|_{Y([\bar{t}_n,0])} \leq \|U_1\|_{Y(I'_1)} < \infty.
\]
Therefore by our assumption (iii) in Subsection \ref{sec: setup} we have
\begin{equation}  \label{location of bartn}
  \limsup_{n \rightarrow \infty} \left\| \left(u_n (\cdot, \bar{t}_n), \partial_t u_n (\cdot, \bar{t}_n)\right) \right\|_H \leq M.
\end{equation}
In addition, the limit \eqref{H norm estimate 1} gives
\begin{align*}
 \limsup_{n \rightarrow \infty} \left\|\begin{pmatrix} u_n (\cdot, \bar{t}_n)\\ \partial_t u_n (\cdot, \bar{t}_n)\end{pmatrix} \right\|_H^2 & =
  \limsup_{n \rightarrow \infty} \left\| \begin{pmatrix} U_{1,n} (\cdot, \bar{t}_n)\\ \partial_t U_{1,n} (\cdot, \bar{t}_n)\end{pmatrix} + {\mathbf S}_L (\bar{t}_n)\begin{pmatrix} w_{0,n} \\ w_{1,n}\end{pmatrix} \right\|_H^2\\
  & = \limsup_{n \rightarrow \infty} \left\| \Di_{\lambda_{1,n}} \begin{pmatrix} U_{1} (\cdot, T)\\ \partial_t U_{1} (\cdot, T)\end{pmatrix} + {\mathbf S}_L (\bar{t}_n)\begin{pmatrix} w_{0,n} \\ w_{1,n}\end{pmatrix} \right\|_H^2\\
  & \geq \lim_{n \rightarrow \infty} \left\| \Di_{\lambda_{1,n}} \begin{pmatrix} U_{1} (\cdot, T)\\ \partial_t U_{1} (\cdot, T)\end{pmatrix}\right\|_H^2 +  \limsup_{n \rightarrow \infty} \left\| {\mathbf S}_L (\bar{t}_n)\begin{pmatrix} w_{0,n} \\ w_{1,n}\end{pmatrix} \right\|_H^2\\
  & = \left\|\begin{pmatrix} U_{1} (\cdot, T)\\ \partial_t U_{1} (\cdot, T)\end{pmatrix}\right\|_H^2 + \limsup_{n \rightarrow \infty} \left\|\begin{pmatrix} w_{0,n} \\ w_{1,n}\end{pmatrix} \right\|_H^2.
\end{align*}
Here we use \eqref{limit of H norm of w}, \eqref{limit of Ystar zero} and apply Lemma \ref{almost orthogonal of w}. Finally we plug the upper bound \eqref{location of bartn} into the left hand of the inequality above and finish the proof.
\end{proof}

\paragraph{A solution to (CP1)} According to the definition of a nonlinear profile, $U_1$ is either a solution to (CP1) or a solution to the equation $\partial_t^2 u -\Delta u = c F(u)$, where $c$ is a constant. If it were the latter case, we could apply Proposition \ref{theorem with constant phi} by the upper bound \eqref{uniform H bound for U1} and conclude that $U_1$ scatters in both two time directions. This contradicts the already-known asymptotic behaviour of $U_1$.

\paragraph{The least upper bound on the norm} Now we can conclude
\begin{equation}
 \sup_{t \in I_1} \|(U_1(\cdot, t), \partial_t U_1 (\cdot, t))\|_H = M. \label{sup of UH}
\end{equation}
It has been known in \eqref{uniform H bound for U1} that the least upper bound above does not exceed $M$. Therefore we only need to show this least upper bound can not be smaller than $M$. This is trivial since we have assumed that Sc(M) holds and we have known that $U_1$ fails to scatter in both two time directions. Combining this least upper bound and Lemma \ref{orthogonality 01}, we obtain
\begin{equation} \label{strong limit of w}
 \lim_{n \rightarrow \infty} \|(w_{0,n}, w_{1,n})\|_H = 0.
\end{equation}

\paragraph{Compactness of initial data} Now we can give a compactness result.

\begin{proposition} \label{compactness of initial data}
 Let $\{(u_{0,n}, u_{1,n})\}_{n \in {\mathbb Z}^+}$ be a sequence of radial initial data and $\{u_n\}$ be their corresponding solutions to (CP1), so that $\{u_n\}$ satisfies the conditions (i), (ii) and (iii) listed at the beginning of Subsection \ref{sec: setup}. Then there exists a subsequence of the initial data, so that it converges strongly in the space $\dot{H}^{s_p} \times \dot{H}^{s_p-1}(\Rm^3)$.
\end{proposition}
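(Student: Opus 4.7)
}
The strategy is to run the machinery assembled in Subsections 4.1--4.3 on the given sequence. Conditions (i)--(iii) on $\{u_n\}$ are precisely the hypotheses used in Subsection \ref{sec: setup}, so after passing to a subsequence I would apply the linear profile decomposition (Theorem \ref{profile decomposition}) to $\{(u_{0,n}, u_{1,n})\}$ and repeat the argument of Subsection 4.2 verbatim. This kills all profiles except one, reducing the decomposition to the form \eqref{single profile decomposition} with a single nonlinear profile $U_1$. The analysis of Subsection 4.3 then shows two crucial facts: the error term satisfies the strong limit $(w_{0,n}, w_{1,n}) \to 0$ in $H$ by \eqref{strong limit of w}, and $U_1$ fails to scatter in both time directions.

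It remains to establish strong convergence of the principal term
\[
 \left(U_{1,n}(\cdot,0), \partial_t U_{1,n}(\cdot,0)\right) = \Di_{\lambda_{1,n}}\!\left(U_1\!\left(\cdot, -t_{1,n}/\lambda_{1,n}\right),\, \partial_t U_1\!\left(\cdot, -t_{1,n}/\lambda_{1,n}\right)\right).
\]
By the paragraph ``A solution to (CP1)'' in Subsection 4.3, $U_1$ is a genuine solution of (CP1) rather than of a constant-coefficient reduction. Combined with the alternatives $\lambda_1 \in \{0,1,\infty\}$ from part (v) of Theorem \ref{profile decomposition} and Definition \ref{def of nonlinear profiles}, this forces $\lambda_1 = 1$; otherwise $U_1$ would solve $\partial_t^2 u - \Delta u = c F(u)$ with $c \in \{\phi(0), \phi(\infty)\}$, and Proposition \ref{theorem with constant phi} together with the uniform bound \eqref{uniform H bound for U1} would give scattering in both time directions, contradicting the failure of scattering established in Subsection 4.3. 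That same subsection also shows $t_1$ is finite.

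With $\lambda_{1,n} \to 1$ and $-t_{1,n}/\lambda_{1,n} \to t_1 \in \Rm$ in hand, continuity of the trajectory $t \mapsto (U_1(\cdot, t), \partial_t U_1(\cdot, t))$ in $H$ at $t = t_1$, combined with continuity of the dilation operator $\Di_\lambda$ at $\lambda = 1$, gives
\[
 \left(U_{1,n}(\cdot,0), \partial_t U_{1,n}(\cdot,0)\right) \longrightarrow \left(U_1(\cdot, t_1), \partial_t U_1(\cdot, t_1)\right) \quad \text{in } H.
\]
Adding the strong vanishing of the error term yields the desired strong convergence of $\{(u_{0,n}, u_{1,n})\}$ along the subsequence. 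The only point requiring care is the bookkeeping verification that the construction of Subsection 4.3, which was originally developed for a break-down sequence of $Sc(A)$, depends only on conditions (i)--(iii) and not on the particular origin of $\{u_n\}$; a quick inspection confirms this is the case, so no additional work is needed.
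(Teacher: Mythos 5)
Your proposal is correct and follows essentially the same route as the paper: run the profile decomposition and the Subsection 4.2--4.3 machinery on the given sequence to reduce to a single nonlinear profile $U_1$ with $\lambda_1 = 1$, $t_1 \in I_1$ finite, and $\|(w_{0,n}, w_{1,n})\|_H \to 0$, then conclude by continuity of the trajectory $t \mapsto (U_1(\cdot,t), \partial_t U_1(\cdot,t))$ and strong continuity of $\Di_\lambda$ at $\lambda=1$. The paper's own proof is just a terse version of this, citing \eqref{single profile decomposition}, \eqref{strong limit of w}, $\lambda_1=1$, $t_1 \in I_1$, and continuity in $C(I_1;H)$; you supply the same ingredients with a bit more explanation of where $\lambda_1=1$ comes from.
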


\begin{proof}
 We have already known a subsequence, still denoted $\{(u_{0,n}, u_{1,n})\}$, so that the single-profile representation \eqref{single profile decomposition} holds. Combining the limits $\lambda_{1,n} \rightarrow \lambda_1 = 1$, $-t_{1,n}/\lambda_{1,n} \rightarrow t_1 \in I_1$, the fact $(U_1(\cdot,t), \partial_t U_1(\cdot,t)) \in C(I_1; \dot{H}^{s_p} \times \dot{H}^{s_p-1})$ and the strong convergence \eqref{strong limit of w}, we have the strong limit
\[
 (u_{0,n}, u_{1,n}) \rightarrow (U_1(\cdot, t_1), \partial_t U_1 (\cdot, t_1))\quad \hbox{in}\; \dot{H}^{s_p} \times \dot{H}^{s_p-1}(\Rm^3).
\]
\end{proof}

\paragraph{Almost periodicity of the critical element} Now we are able to conclude the set
\[
 \{(u(\cdot,t), \partial_t u(\cdot, t)| t\in I_1\}
\]
is pre-compact in the space $\dot{H}^{s_p} \times \dot{H}^{s_p-1}(\Rm^3)$. In fact, if $\{t_n\}_{n \in {\mathbb Z}^+}$ is a sequence of time in $I_1$, then the time-translated solutions $U_1(x, t+t_n)$ solve (CP1) and satisfy the conditions (i), (ii), (iii) listed at the beginning of Subsection \ref{sec: setup}, with initial data $(U_1(\cdot,t_n), \partial_t U_1 (\cdot,t_n))$. Now we can apply Proposition \ref{compactness of initial data}, conclude that the sequence $\{(U_1(\cdot,t_n), \partial_t U_1 (\cdot,t_n))\}$ has a convergent subsequence in the space $\dot{H}^{s_p} \times \dot{H}^{s_p-1}(\Rm^3)$ and thus finish the proof.

\paragraph{Global existence and the completion of proof} According to Remark \ref{lifespan lower bound for compact set}, the compactness result above implies that there exists a positive constant $\eps$, such that
\[
 t_0 \in I_1 \Longrightarrow [t_0 - \eps, t_0 + \eps] \subseteq I_1.
\]
This means that $I_1 = \Rm$. Collecting all information about $U_1$ we have obtained, finally we are able to finish the proof of Theorem \ref{existence of critical element}.

\begin{remark}
 We could prove a stronger version of Theorem \ref{existence of critical element} without the radial assumption in a similar way if we assumed that Theorem \ref{theorem with phi pm 1} still holds without the radial assumption. The latter has not been proved yet, as far as the author knows, although we expect that it is still true.
\end{remark}

\section{Further Properties of the Critical Element}

In this section we show that the critical element has to satisfy further regularity conditions. The argument is similar to the one we used for the special case $\phi(x) \equiv \pm 1$. The radial assumption plays an important role in this argument. If $u(x,t)$ is a radial function, then we use the notation $u(r,t)$ for the value $u(x,t)$ when $|x|=r$. The main idea is that if $u$ is a radial solution to
\[
 \partial_t^2 u - \Delta u = F (|x|,t),
\]
then the function $w (r,t) \doteq r u(r,t)$ is a solution to the one-dimensional wave equation
\begin{equation} \label{eqn for w}
 \partial_t^2 w - \partial_r^2 w = r F(r,t).
\end{equation}
A direct calculation shows
\begin{lemma} [See Lemma 4.2 in \cite{shen2}]\label{eqn between w and u}
Let $(u(x,t_0), \partial_t u(x,t_0))$ be radial so that
\[
 \nabla u(\cdot, t_0), \partial_t u(\cdot, t_0) \in L^2(\{x \in \Rm^3: a < |x| < b\})
\]
for any $0 < a < b < \infty$, then we have the identity
\[
 \frac{1}{4\pi}\int_{a < |x| < b} (|\nabla u|^2 + |\partial_t u|^2) dx = \left(\int_a^b [(\partial_r w)^2 + (\partial_t w)^2]dr\right) +
 \left(a u^2(a) - b u^2(b)\right)
\]
holds. Here we take the value of the functions at time $t_0$.
\end{lemma}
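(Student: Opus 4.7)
The plan is to convert everything to the one-dimensional radial variable, expand the derivatives of $w = r u$, and integrate the cross term by parts. First I would note that the radial hypothesis $\nabla u(\cdot, t_0)\in L^2(\{a<|x|<b\})$ translates, in the one-dimensional variable $r$, to $u(r,t_0)\in H^1([a,b])$ as a function of $r$ alone (after absorbing the weight $r^2$ into the radial volume element, which is bounded above and below on $[a,b]$). In particular $u(\cdot,t_0)$ has well-defined continuous traces at the endpoints $r=a$ and $r=b$, so the boundary terms $a\,u^2(a)$ and $b\,u^2(b)$ make sense, and we have enough regularity to justify an integration by parts in $r$.

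Next I would carry out the calculation directly. Using spherical coordinates,
\[
 \frac{1}{4\pi}\int_{a<|x|<b}\bigl(|\nabla u|^2+|\partial_t u|^2\bigr)\,dx
 =\int_a^b\bigl((\partial_r u)^2+(\partial_t u)^2\bigr)\,r^2\,dr.
\]
From $w(r,t)=r\,u(r,t)$ one computes $\partial_r w=u+r\,\partial_r u$ and $\partial_t w=r\,\partial_t u$, so
\[
 (\partial_r w)^2+(\partial_t w)^2 = u^2 + 2r\,u\,\partial_r u + r^2(\partial_r u)^2 + r^2(\partial_t u)^2.
\]
The cross term can be written as $2r\,u\,\partial_r u = r\,\partial_r(u^2)$, and an integration by parts on $[a,b]$ gives
\[
 \int_a^b r\,\partial_r(u^2)\,dr = \bigl[r\,u^2\bigr]_a^b - \int_a^b u^2\,dr = b\,u^2(b) - a\,u^2(a) - \int_a^b u^2\,dr.
\]
The $\int u^2$ on the right cancels the $\int u^2$ coming from the first term $u^2$ in the expansion, and what remains is exactly
\[
 \int_a^b\bigl[(\partial_r w)^2+(\partial_t w)^2\bigr]dr
 = b\,u^2(b)-a\,u^2(a) + \int_a^b r^2\bigl[(\partial_r u)^2+(\partial_t u)^2\bigr]dr,
\]
which rearranges to the claimed identity.

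The only real issue is justifying the integration by parts under the stated regularity. The mildly painful step would be if $u(\cdot,t_0)$ were merely $L^2_{\text{loc}}$ with distributional derivative in $L^2$: then $u^2$ need not be classically differentiable, but $u^2\in W^{1,1}([a,b])$ because $u\in H^1([a,b])\hookrightarrow C([a,b])$, so the fundamental theorem of calculus still applies to $r\,u^2$ and the integration by parts above is legitimate. Alternatively, one can approximate $u(\cdot,t_0)$ in $H^1([a,b])$ by smooth radial functions, verify the identity for smooth $u$ where every manipulation is classical, and then pass to the limit using continuity of traces. Either way there is no real obstruction, and the lemma follows.
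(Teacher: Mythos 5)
Your proof is correct and is precisely the ``direct calculation'' the paper alludes to (the paper states the lemma without proof, citing Lemma 4.2 of \cite{shen2}): expand $(\partial_r w)^2 = u^2 + 2ru\,\partial_r u + r^2(\partial_r u)^2$, integrate the cross term by parts, and observe that the $\int u^2\,dr$ terms cancel. The regularity discussion you append is reasonable; one small remark is that the hypothesis as stated only controls $\nabla u$ and $\partial_t u$, not $u$ itself, so to speak of $u\in H^1([a,b])$ or of the traces $u^2(a)$, $u^2(b)$ one is implicitly using that $u(\cdot,t_0)$ is the radial $\dot H^{s_p}$ function from the surrounding context (which by Lemma \ref{pointwise radial Hs} is continuous and bounded away from the origin), but this is a minor point of bookkeeping and does not affect the argument.
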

First of all, we claim that $u$ is always more regular than $\dot{H}^{s_p} \times \dot{H}^{s_p-1}$ away from the origin.
\begin{proposition}
Assume $3 < p < 5$. Let $u$ be a radial solution to the wave equation
\[
 \partial_t^2 u - \Delta u = F(|x|,t),
\]
defined for all $t \in \Rm$ so that
\begin{itemize}
 \item The set $\{(u(\cdot, t), \partial_t u(\cdot, t))| t \in \Rm\}$ is pre-compact in the space $\dot{H}^{s_p} \times \dot{H}^{s_p-1} (\Rm^3)$.
 \item The function $F(|x|,t)$ has a finite $Z(I)$ norm for any bounded time interval $I$ and satisfies the inequality $\left|F(|x|,t)\right| \leq C_0 |x|^{-\frac{2p}{p-1}}$ for all $(x,t) \in (\Rm^3 \setminus \{0\})\times \Rm$.
\end{itemize}
If we define $w(r,t) = ru(r,t)$, then $\partial_r w(\cdot,t), \partial_t w(\cdot, t)\in C(\Rm_t; L^2 ([R,\infty)))$ for $R>0$ with
\begin{align}
 & \int_R^{4R} \left[(\partial_r w(r,t))^2 + (\partial_t w(r,t))^2\right]\, dr \nonumber\\
 & \leq  \frac{1}{2} \int_R^{4R} \left[ \left(\int_0^\infty (r+t) F(r +t, t_0 -t) dt\right)^2+ \left(\int_0^\infty (r+t) F(r +t, t_0 +t) dt\right)^2\right]\, dr \label{w estimate1}\\
 & \leq C_1 R^{-2(1-s_p)}. \nonumber
\end{align}
This implies that $(u(\cdot,t), \partial_t u(\cdot,t)) \in C(\Rm; \dot{H}^1 \times L^2 (\Rm^3 \setminus B(0,R)))$ with
\begin{equation}
\int_{R < |x| < 4R} \left(|\nabla u(x,t)|^2+ |\partial_t u(x,t)|^2\right) dx \leq C_2 R^{-2(1-s_p)}.
\end{equation}
Here the constants $C_1$ and $C_2$ are independent of $t$ and $R$.
\end{proposition}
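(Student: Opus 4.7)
The plan is to reduce the problem to a one-dimensional wave equation for $w(r,t) := r u(r,t)$, which by a direct calculation satisfies $(\partial_t^2 - \partial_r^2) w = G$ with $G(r,t) := r F(r,t)$. I introduce the Riemann invariants $w_\pm := \partial_t w \pm \partial_r w$, which satisfy the transport equations $(\partial_t \mp \partial_r) w_\pm = G$. Along the outgoing null lines $(t_0 \mp s,\, r_0 + s)$ these decouple into ODE, so integrating from $s = 0$ to $s = T$ gives
\[
 w_\pm(t_0, r_0) \;=\; w_\pm(t_0 \mp T,\, r_0 + T) \;\pm\; \int_0^T (r_0 + s)\, F(r_0 + s,\, t_0 \mp s)\, ds.
\]

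My next step is to pass to the limit $T \to \infty$ and obtain the explicit representation $w_\pm(t_0, r_0) = \pm \int_0^\infty (r_0 + t)\, F(r_0 + t,\, t_0 \mp t)\, dt$. The integral on the right converges absolutely since $|F(r,t)| \leq C_0\, r^{-2p/(p-1)}$ and the resulting exponent $1 - 2p/(p-1) = -(p+1)/(p-1) < -1$. Making the boundary term at $s = T$ vanish is the main technical obstacle: the pointwise radial bound of Lemma \ref{pointwise radial Hs} combined with the uniform decay from pre-compactness of the trajectory (Lemma \ref{uniform decay for compact set}) yields only $|u(x,t)| = o(|x|^{s_p - 3/2})$ uniformly in $t$, so $w(r,t) = r u(r,t)$ itself does \emph{not} decay at spatial infinity when $s_p > 1/2$. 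The limit must therefore be taken in an integrated-in-$r$ sense rather than pointwise; this is the one nontrivial input and is handled exactly as in \cite{shen2}, using an outgoing-characteristic flux estimate to show that $w_\pm(t_0 \mp T,\, \cdot + T) \to 0$ in $L^2_{\mathrm{loc}}(dr)$ as $T \to \infty$.

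Once the representation is in place, the remaining estimates are direct. The identity $(\partial_r w)^2 + (\partial_t w)^2 = \tfrac{1}{2}(w_+^2 + w_-^2)$, integrated in $r \in [R, 4R]$, yields the first line of \eqref{w estimate1}. Substituting the pointwise bound on $F$ and computing the inner integral gives $\int_0^\infty (r+t)\,|F(r+t,\cdot)|\, dt \lesssim \int_r^\infty u^{-(p+1)/(p-1)}\, du = \tfrac{p-1}{2}\, r^{-2/(p-1)}$, whence $w_\pm^2 \lesssim r^{-4/(p-1)}$. Integrating over $[R, 4R]$ and using the identity $2(1-s_p) = -1 + 4/(p-1)$, i.e.\ $(p-5)/(p-1) = -2(1-s_p)$, gives the stated bound $C_1 R^{-2(1-s_p)}$; the absolute convergence of the representation formula plus dominated convergence also delivers the claimed $C(\Rm_t;\, L^2([R, \infty)))$ regularity in $t$.

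To conclude the spatial $\dot H^1 \times L^2$ control, I apply Lemma \ref{eqn between w and u} on the annulus $\{R < |x| < 4R\}$, which converts $\int_R^{4R}[(\partial_r w)^2 + (\partial_t w)^2]\, dr$ into $\tfrac{1}{4\pi}\int_{R<|x|<4R}(|\nabla u|^2 + |\partial_t u|^2)\, dx$ up to two boundary corrections of the form $a u^2(a) - b u^2(b)$ with $a = R$, $b = 4R$. Each such correction is bounded, via Lemma \ref{pointwise radial Hs}, by $R \cdot R^{2(s_p - 3/2)} = R^{-2(1-s_p)}$, which is of exactly the same order as the main estimate and hence absorbed into $C_2 R^{-2(1-s_p)}$. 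This finishes the proof.
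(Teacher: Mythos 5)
Your proof is correct and follows the same strategy the paper invokes by reference to Section~4 of \cite{shen2}: pass to $w=ru$, exploit the 1D wave structure (your Riemann-invariant decomposition $w_\pm=\partial_t w\pm\partial_r w$ is one clean realization of the ``path integral'' method the paper names), represent $w_\pm$ by integrating the source along outgoing characteristics, and then use the pointwise bound on $F$ plus Lemma~\ref{eqn between w and u} and Lemma~\ref{pointwise radial Hs} to close. You correctly flag the boundary-term vanishing as the one nontrivial input and defer it to \cite{shen2}, which is exactly what the paper's own one-line proof does.
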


\begin{proof}
 A similar result in the special case $F(|x|,t) = |u|^{p-1} u$ has been proved in the author's previous work \cite{shen2}. This general case can be proved in exactly the same way, please refer to Section 4 of the work mentioned above. The main ingredients of the proof include the transformation $u \rightarrow w$ as given above, the standard method to deal with one-dimensional wave equation via path integrals, Duhamel's formula, strong Huygens' principle and smooth approximation techniques.
\end{proof}

\paragraph{Uniform decay of $u$ as $r \rightarrow \infty$} We recall the explicit formula to solve the initial-value problem of the one-dimensional wave equation \eqref{eqn for w} and obtain that $w = ru$ satisfies
\begin{align}
 w(r, t_0) = &  \frac{1}{2}\left[w\left(\frac{r}{2}, t_0 - \frac{r}{2}\right) + w \left(\frac{3r}{2}, t_0 - \frac{r}{2}\right)\right] + \frac{1}{2} \int_{r/2}^{3r/2} \partial_t w \left(s,t_0-\frac{r}{2}\right)\, ds \nonumber \\
 & + \frac{1}{2} \int_0^{r/2} \int_{\frac{r}{2}+t}^{\frac{3r}{2}-t} s F\left(s, t_0 - \frac{r}{2} +t\right) \, ds \,dt. \label{explicit one d wave}
\end{align}
Let us fix $\beta_0 = \frac{3}{2} - s_p = 2/(p-1)$. For each $\beta \in [\beta_0, 1)$ we define a function
\[
 f_\beta: \Rm^+ \rightarrow [0,\infty)\cup\{\infty\},\qquad f_\beta (r) = \sup_{t \in \Rm, |x|\geq r} |x|^{\beta} |u(x,t)|,
\]
which helps us compare the decay rate of $u$ with that of $|x|^{-\beta}$ as $|x| \rightarrow \infty$. Let us assume that $f_\beta (r)$ is always finite for $r>0$ and that $f_\beta (r) \rightarrow 0$ as $r \rightarrow \infty$, which are true at least for $\beta = \beta_0$, thanks to Lemma \ref{pointwise radial Hs} and Lemma \ref{uniform decay for compact set}. Now we recall $F(|x|,t)= \phi(x) |u(x,t)|^{p-1} u(x,t)$, use the upper bound $|u(x,t)| \leq |x|^{-\beta} f(|x|) \leq |x|^{-\beta} f(r/2)$ and the inequality \eqref{w estimate1} on the right hand of \eqref{explicit one d wave}, divide both sides by $r^{1-\beta}$ and finally obtain
\begin{equation}
r^{\beta} |u(r,t_0)| \leq \left[g(\beta) + C_p f_\beta^{p-1} \left(\frac{r}{2}\right) r^{2-(p-1)\beta} \right] f_\beta \left(\frac{r}{2}\right). \label{r beta u}
\end{equation}
Here
\[
g(\beta) = \frac{1}{2}\left[\left(\frac{3}{2}\right)^{1-\beta} + \left(\frac{1}{2}\right)^{1-\beta} \right] < 1.
\]
We observe that the right hand right of \eqref{r beta u} is a non-increasing function of $r$, take the least upper bound on both sides for all $r > r_0$ and obtain
\[
 f_\beta (r_0)\leq  \left[g(\beta) + C_p f_\beta^{p-1} \left(\frac{r_0}{2}\right) r_0^{2-(p-1)\beta} \right] f_\beta \left(\frac{r_0}{2}\right).
\]
Since $2-(p-1)\beta \leq 0$ and $\displaystyle \lim_{r_0 \rightarrow \infty} f_\beta (r_0/2) = 0$, we have
\[
 f_\beta (r) \leq \frac{g(\beta) + 1}{2} f_\beta (r/2)
\]
when $r$ is sufficiently large. This implies that $f_\beta$ decays at a rate at least comparable to that of a small negative power of $r$. As a result, we can increase the value of $\beta$, iterate this argument, and eventually push $\beta$ to $1$. The final conclusion is summarized in the following proposition. Please see Section 7 of \cite{shen2} for more details on this argument.

\begin{proposition} \label{enery channel 2}
 Assume $3 < p < 5$. Let $u$ be a radial solution to the wave equation
\[
 \partial_t^2 u - \Delta u = F(|x|,u,t),
\]
defined on all $t \in \Rm$ so that
\begin{itemize}
 \item The set $\{(u(\cdot, t), \partial_t u(\cdot, t))| t \in \Rm\}$ is pre-compact in the space $\dot{H}^{s_p} \times \dot{H}^{s_p-1} (\Rm^3)$.
 \item The function $F(|x|,u,t)$ satisfies $\left|F(|x|,u,t)\right| \leq |u|^p$.
\end{itemize}
Then we have three constants $A\in \Rm$, $C_3, C_4 > 0$ independent of $t$, $r$ and $x$, such that
\begin{itemize}
 \item The solution satisfies
 \begin{align*}
  &|u(x,t)| \leq \frac{C_3}{|x|},& &\left|u(x,t) - \frac{A}{|x|}\right| \lesssim \frac{1}{|x|^{p-2}};&
 \end{align*}
 \item We have an estimate on the local energy
 \[
  \int_{r <|x|<4r} \left(|\nabla u(x,t)|^2 + |\partial_t u(x,t)|^2 \right) \,dx \leq C_4 r^{-1}.
 \]
\end{itemize}
\end{proposition}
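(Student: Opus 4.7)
The plan is to run to completion the iteration on $\beta$ that has just been sketched, push the pointwise decay of $u$ all the way to the rate $1/|x|$, and then use that sharp decay both to extract the asymptotic constant $A$ and to improve the local energy bound supplied by the preceding proposition.

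\textbf{Bootstrap phase.} I would start at $\beta_0 = 2/(p-1) = 3/2 - s_p$, where the combination of Lemma \ref{pointwise radial Hs}, the pre-compactness of the orbit, and Lemma \ref{uniform decay for compact set} already furnishes $f_{\beta_0}(r) < \infty$ for every $r > 0$ and $f_{\beta_0}(r) \to 0$ as $r \to \infty$. Inserting $|u(r,t)| \leq r^{-\beta} f_\beta(r)$ and $|F| \leq |u|^p$ into \eqref{explicit one d wave} gives, for every $\beta \in [\beta_0, 1)$, the recursion \eqref{r beta u}. Because $g(\beta) < 1$ strictly on $(0,1)$ and the correction $f_\beta^{p-1}(r/2)\,r^{2-(p-1)\beta}$ vanishes as $r \to \infty$ (the exponent being nonpositive), one obtains $f_\beta(r) \leq \tfrac{1}{2}(1 + g(\beta))\, f_\beta(r/2)$ for all $r$ large, hence $f_\beta(r) \lesssim r^{-\alpha(\beta)}$ for some $\alpha(\beta) > 0$. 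Upgrading $\beta$ to $\beta + \alpha(\beta)$ and iterating brings $\beta$ above any prescribed value strictly less than $1$ in finitely many rounds.

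\textbf{From $\beta < 1$ to $|u| \leq C_3/|x|$ and to the constant $A$.} Once $\beta$ is close enough to $1$ so that $p\beta > 3$, the source double integral in \eqref{explicit one d wave} is dominated by $\int_0^{r/2}(r/2 + t)^{2-p\beta}\, dt \lesssim r^{3 - p\beta}$, which is uniformly bounded. A refined version of the energy estimate of the previous proposition, derived by re-running \eqref{w estimate1} with the improved pointwise decay, controls the integrated $\partial_t w$ term through Cauchy–Schwarz by $r^{3 - p\beta}$, and a dyadic iteration on the two boundary values $w(r/2,\cdot)$ and $w(3r/2,\cdot)$ closes the recursion at $|w(r,t)| \leq C$, i.e.\ $|u(x,t)| \leq C_3/|x|$. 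For the asymptotic constant I pass to Riemann-invariant form: setting $\alpha = \partial_t w - \partial_r w$, one has $(\partial_t + \partial_r) \alpha = rF$ along the outgoing characteristics $r - t = \mathrm{const}$, and the source $rF$ of size $O(r^{-(p-1)})$ is integrable at infinity because $p > 3$. Hence $\alpha$ admits a limit $\alpha_\infty(r-t)$ along every outgoing ray with rate $O(r^{-(p-2)})$. A parallel analysis for the other invariant $\beta_R = \partial_t w + \partial_r w$, together with the pre-compactness of $\{(u(\cdot,t), \partial_t u(\cdot,t))\}_{t \in \Rm}$ in $\dot{H}^{s_p} \times \dot{H}^{s_p-1}$ (which prevents the characteristic limits from having any genuine dependence on their characteristic label, since otherwise two characteristic values would produce two distinct cluster points of the orbit at infinity), forces $w(r,t)$ to converge to a single $t$-independent constant $A$. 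Integrating the pointwise bound $|\partial_r w| \lesssim r^{-(p-2)}$ from $r$ to $\infty$ gives $|w - A| \lesssim r^{-(p-3)}$, and dividing by $r$ yields the claimed $|u - A/|x|| \lesssim |x|^{-(p-2)}$.

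\textbf{Sharp local energy.} Feeding $|u(x,t)| \leq C_3/|x|$ back into \eqref{w estimate1}, the inner integrals satisfy
\[
 \int_0^\infty (r+t)\,|F(r+t,\cdot)|\,dt \lesssim \int_0^\infty (r+t)^{1-p}\,dt \lesssim r^{2-p},
\]
so that squaring and integrating over $r \in [R,4R]$ produces
\[
 \int_R^{4R}\!\! \bigl[(\partial_r w)^2 + (\partial_t w)^2\bigr]\,dr \lesssim R \cdot R^{2(2-p)} = R^{5 - 2p} \lesssim R^{-1},
\]
since $p > 3$ forces $5 - 2p < -1$. Transferring back to three dimensions by Lemma \ref{eqn between w and u} costs only the boundary correction $a\,u^2(a) - b\,u^2(b)$ at $a = R$, $b = 4R$, which is $O(1/R)$ by the pointwise bound. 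This yields the desired $C_4/R$ estimate.

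The hardest part will be the closing of the bootstrap at $\beta = 1$ together with the verification of $t$-independence of $A$. The recursion \eqref{r beta u} degenerates at $\beta = 1$ since $g(1) = 1$, so the final jump from $\beta$ arbitrarily close to $1$ to the exact rate $1/|x|$ has to exploit the integrability of the source on characteristics rather than the iteration itself; and the constancy of the limit $A$ is not a one-dimensional statement at all, but is forced by the pre-compactness of the orbit in $\dot{H}^{s_p} \times \dot{H}^{s_p-1}$, which forbids the asymptotic profile from carrying any genuine $t$-dependence.
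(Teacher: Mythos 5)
Your proposal tracks the paper's outline (reduce to the one-dimensional equation for $w=ru$, use the d'Alembert representation \eqref{explicit one d wave}, bootstrap the decay exponent $\beta$ via \eqref{r beta u}, then close at $\beta=1$ by exploiting integrability of the source and the refined energy estimate) and you correctly identify the two delicate points. However, the way you propose to close them is not yet a proof. First, the ``dyadic iteration on the two boundary values $w(r/2,\cdot)$ and $w(3r/2,\cdot)$'' does not close the recursion at $\beta=1$: if you only know $|w(r,t)|\leq A r^{\gamma}$ with $\gamma=1-\beta>0$, the d'Alembert inequality $|w(r,t)|\leq \tfrac12[|w(r/2,\cdot)|+|w(3r/2,\cdot)|]+O(r^{3-p\beta})$ merely reproduces the bound $Ar^{\gamma}$ (the coefficient $g(\gamma)<1$ lets you shrink $A$, never $\gamma$), and the recursion is in any case the wrong direction since it bounds $w$ at larger radii by $w$ at smaller ones. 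The step that actually gives uniform boundedness of $w$ is the integrability of $\partial_r w$ coming from the refined energy estimate: once $p\beta>3$, Cauchy--Schwarz on $\int_R^{4R}(\partial_r w)^2\,dr\lesssim R^{5-2p\beta}$ gives $\int_R^{4R}|\partial_r w|\,dr\lesssim R^{3-p\beta}$, summable over dyadic $R$, hence $\int_{R_0}^{\infty}|\partial_r w|\,dr<\infty$ uniformly in $t$; combined with the uniform bound $|w(R_0,t)|\leq C R_0^{(p-3)/(p-1)}$ coming from Lemma \ref{pointwise radial Hs} and the bounded orbit, this shows $|w(r,t)|\leq C$ for $r\geq R_0$, and it is \emph{this}, not the D'Alembert recursion, that produces $|u|\leq C_3/|x|$.

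Second, the justification for the $t$-independence of $A$ is too vague to verify. The assertion that ``two characteristic values would produce two distinct cluster points of the orbit at infinity'' is not a proof: pre-compactness concerns the orbit in $\dot H^{s_p}\times\dot H^{s_p-1}$, and the link to characteristic limits needs to be made precise. The correct mechanism is quantitative: having established $|u|\leq C_3/|x|$, the refined 1D energy estimate $\int_R^{4R}[(\partial_r w)^2+(\partial_t w)^2]\,dr\lesssim R^{5-2p}$ decays \emph{faster} than $R^{-1}$, so a nonzero characteristic limit $\alpha_\infty(c_0)\neq 0$ (which, by continuity of $\alpha_\infty$, would persist on an interval $|c-c_0|\leq\eta$) would force $\int_{c_0-\eta+t}^{c_0+\eta+t}\alpha^2\,dr\gtrsim\delta^2\eta$ for all large $t$, while the energy estimate forces this to vanish as $t\to\infty$. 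Hence $\alpha_\infty\equiv\beta_{R,\infty}\equiv 0$, giving the \emph{pointwise} bounds $|\partial_r w|,|\partial_t w|\lesssim r^{-(p-2)}$ (which your write-up asserts but does not derive), and then $A(t)-A(t')=\lim_{r\to\infty}\int_{t'}^{t}\partial_t w(r,s)\,ds=0$. So the skeleton of your argument is the right one — it is essentially the channel-of-energy computation from \cite{shen2} that the paper points to — but as written, the closure at $\beta=1$ is attributed to a recursion that cannot close it, and the constancy of $A$ rests on a one-line appeal to compactness that is not a substitute for the energy-flux-along-characteristics computation.
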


\section{Ground State}

In this section we construct a ground state with a similar asymptotic behaviour to the critical element $u$ when $|x| \rightarrow \infty$.

\subsection{Existence of the Ground State} \label{sec: existence}

\begin{proposition} \label{existence of the ground state}
Fix $p>3$. Given any constant $A$, there exists a radius $R = R(\phi, A) \geq 0$ and a radial solution $W_A \in C^2 (\Rm^3 \setminus \bar{B}(0,R))$ to the elliptic equation $- \Delta W = \phi (x) |W|^{p-1} W$, such that
\begin{itemize}
\item The behaviour of $W_A (x)$ as $|x| \rightarrow \infty$ is characterized by
\begin{align*}
 &\left|W_A (x) - \frac{A}{|x|}\right| \lesssim \frac{1}{|x|^{p-2}},& &|\nabla W_A (x)| \lesssim \frac{1}{|x|^2}.&
\end{align*}
\item If $R(\phi, A) > 0$, then $\displaystyle \limsup_{|x| \rightarrow R(\phi, A)^+} \left|W_A (x)\right| = +\infty$.
\end{itemize}
\end{proposition}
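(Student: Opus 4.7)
The plan is to reduce to a second-order ODE via the substitution $w(r) = r W(r)$ and build the solution from spatial infinity inward. Since $W$ is radial, the identity $-\Delta W = -w''/r$ (already used elsewhere in this paper for the wave equation) turns $-\Delta W = \phi(x)|W|^{p-1}W$ into the scalar equation
\[
 -w''(r) = \phi(r)\, r^{-(p-1)}\, |w(r)|^{p-1} w(r).
\]
Prescribing the boundary data $w(\infty) = A$, $w'(\infty) = 0$ and integrating twice from $r$ to $\infty$ via Fubini yields the integral equation
\[
 w(r) = A - \int_r^\infty (\tau - r)\, \phi(\tau)\, \tau^{-(p-1)}\, |w(\tau)|^{p-1} w(\tau)\, d\tau. \qquad (\ast)
\]

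First I would solve $(\ast)$ by a contraction mapping argument on $[R_0, \infty)$ for $R_0 = R_0(\phi, A)$ sufficiently large. Work in the closed metric ball
\[
 X = \bigl\{ w \in C([R_0,\infty)) : \|w - A\|_\infty \leq |A| + 1 \bigr\}
\]
with the supremum norm. The key explicit estimate, valid because $p > 3$, is
\[
 \int_r^\infty (\tau - r)\, \tau^{-(p-1)}\, d\tau = \frac{r^{-(p-3)}}{(p-2)(p-3)}.
\]
Combined with $|\phi| \leq 1$, this produces a bound of the form $\|Tw - A\|_\infty \leq C(|A|+1)^p R_0^{-(p-3)}$ for the operator $T$ defined by the right side of $(\ast)$, together with a Lipschitz constant $C'(|A|+1)^{p-1} R_0^{-(p-3)}$ on $X$; choosing $R_0$ large makes $T$ a contraction of $X$ into itself. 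Reading the same bound pointwise at the fixed point $w$ gives $|w(r) - A| \lesssim r^{-(p-3)}$; differentiating $(\ast)$ yields $w'(r) = \int_r^\infty \phi(\tau)\tau^{-(p-1)}|w(\tau)|^{p-1}w(\tau)\, d\tau$, so $|w'(r)| \lesssim r^{-(p-2)}$, and the ODE bootstraps $w \in C^2([R_0,\infty))$. Passing back through $W = w/r$, these translate to $|W(x) - A/|x|| \lesssim |x|^{-(p-2)}$ and $|\nabla W(x)| \lesssim |x|^{-2}$, as required.

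Finally I would extend the solution inward by classical ODE theory applied to the first-order system in $(w, w')$. On $r > 0$ the radial equation is a smooth ODE, so there is a unique maximal interval $(R, \infty)$ of existence, with $R \in [0, \infty)$. If $R > 0$, the structure of $(\ast)$ forces blow-up to occur in $W$ itself: so long as $w$ remains bounded on $(R, R_0]$, the right-hand side $\phi(r) r^{-(p-1)}|w|^{p-1}w$ is bounded (since $r \geq R > 0$), hence $w''$ and then $w'$ stay bounded and the solution would extend further. Therefore $\limsup_{r \to R^+} |w(r)| = \infty$, and since $r$ is bounded away from $0$ and $\infty$ on this interval, $\limsup_{|x|\to R^+} |W_A(x)| = \infty$.

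The main obstacle is the fixed-point step: although $p > 3$ makes the kernel $(\tau-r)\tau^{-(p-1)}$ integrable and the key integral explicitly computable, one must carefully track how $R_0$ depends on $A$ and $\phi$ and verify the Lipschitz estimate for $|w|^{p-1}w$ uniformly on $X$, especially to make a single choice of $R_0$ handle the full range of $|A|$ under consideration. The degenerate case $A = 0$ is allowed by the same construction (yielding the trivial fixed point $w \equiv 0$, hence $W_A \equiv 0$ and $R = 0$), and the continuation/blow-up step is then a standard ODE argument.
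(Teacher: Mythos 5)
Your proposal is correct and is essentially the same as the paper's proof: the paper substitutes $W_A(x) = (A + \rho(|x|))/|x|$ with $\rho = rW - A$ (your $w - A$), solves $\rho'' = -\phi(r)F(\rho+A)/r^{p-1}$ by a contraction map on $\{\rho \in C([R_1,\infty);[-1,1])\}$ using the iterated-integral form $\int_r^\infty\int_s^\infty(\cdot)\,dt\,ds$ of your Fubini-collapsed kernel $(\tau - r)$, then derives the $\rho$, $\rho'$ decay, passes back to $W_A$, extends to the maximal interval, and argues blow-up of $|\rho|$ by the same boundedness-of-$\rho''$ contradiction. The only cosmetic differences are your choice of unknown ($w$ vs.\ $\rho = w - A$) and the single-integral vs.\ double-integral presentation of the fixed-point operator.
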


\begin{proof}
 Let us rewrite $W_A (x)$ into the form $\displaystyle W_A (x) = \frac{A + \rho (|x|)}{|x|}$. Here $\rho$ is a function defined for large positive real numbers with limits $\rho(r), \rho'(r) \rightarrow 0$ as $r \rightarrow \infty$.  The elliptic equation $- \Delta W = \phi (x) |W|^{p-1} W$ can be rewritten in term of $\rho$:
 \begin{equation}
  \rho''(r) = - \frac{\phi(r) F(\rho(r) +A)}{r^{p-1}}, \qquad F(u) = |u|^{p-1}u. \label{eqn of rho}
 \end{equation}
 The first step is to show this equation has a unique solution defined on the interval $[R_1,\infty)$ via a fixed-point argument, where $R_1$ is a large radius to be determined later. We define a complete metric space
 \[
  X = \left\{\rho: \rho \in C([R_1,\infty); [-1,1]),\; \lim_{r \rightarrow +\infty} \rho(r) = 0 \right\}
 \]
with the distance $\displaystyle d(\rho_1, \rho_2) = \sup_{r \in [R_1,\infty)} \left|\rho_1(r) - \rho_2 (r)\right|$ and a map
\[
 \left({\mathbf L} \rho\right)(r) = \int_{r}^\infty \int_s^\infty \left(- \frac{\phi(t) F(\rho(t) +A)}{t^{p-1}}\right) \,dt\, ds.
\]
Since the absolute value of the integrand never exceeds $(1+|A|)^p/t^{p-1}$, this integral defines a continuous function on $[R_1,\infty)$. In addition, we have
\begin{align}
 \left|\left({\mathbf L} \rho\right)(r)\right| & \leq \int_{r}^\infty \int_s^\infty \frac{(1+|A|)^p}{t^{p-1}} \,dt\, ds \leq \frac{C_p (1+|A|)^p}{r^{p-3}};\label{estimate of rho} \\
 \left|\left({\mathbf L} \rho_1\right)(r) - \left({\mathbf L} \rho_2\right)(r)\right| & \leq  \int_{r}^\infty \int_s^\infty \frac{p(1+|A|)^{p-1}d(\rho_1, \rho_2)}{t^{p-1}} \,dt\, ds \leq \frac{C_p (1+|A|)^{p-1}}{r^{p-3}}d(\rho_1, \rho_2). \nonumber
\end{align}
As a result, if we choose a sufficiently large $R_1 = R_1 (A, p)$, then the map $\mathbf L$ is a contraction map on the space $X$. This enables us to apply a fixed-point argument and find a solution $\rho$ to the equation \eqref{eqn of rho} defined on $[R_1,\infty)$. The formula $W_A (x) = (A+\rho(|x|))/|x|$ then gives a $C^2$ solution to the elliptic equation $- \Delta W = \phi (x) |W|^{p-1} W$ defined for $|x|>R_1$. Furthermore, we have an estimate on $\rho'(r)$ when $r\geq R_1$:
\begin{equation} \label{estimate of rho prime}
 \left|\rho'(r)\right| = \left|\int_r^\infty \left(\frac{\phi(t) F(\rho(t) +A)}{t^{p-1}}\right) \,dt\right| \leq \frac{C_p (1+|A|)^p}{r^{p-2}}.
\end{equation}
Combing this upper bound on $\rho'(r)$ with the upper bound \eqref{estimate of rho} on $\rho(r)$, we obtain the asymptotic behaviour of $W_A (x)$ as $|x|$ is large:
\begin{align*}
 \left|W_A(x) - \frac{A}{|x|}\right| & = \frac{\left|\rho(|x|)\right|}{|x|} \lesssim_{p,A} \frac{1}{|x|^{p-2}};\\
 \left|\nabla W_A(x)\right| & = \left|\frac{\rho'(|x|)}{|x|} - \frac{A + \rho(|x|)}{|x|^2}\right|\lesssim_{p,A} \frac{1}{|x|^2}.
\end{align*}
The second step is to extend the solution $\rho(r)$ to its maximal interval of existence $(R(\phi,A), \infty)$. We still need to prove
\[
 \limsup_{|x| \rightarrow R(\phi, A)^+} \left|W_A (x)\right| = +\infty
\]
if $R(\phi, A) > 0$. This is equivalent to saying the upper limit of $|\rho(r)|$ as $r \rightarrow R(\phi, A)^+$ is infinity. If this were false, then we can assume $|\rho(r)| \leq M$ for all $r > R(\phi, A)$. But this implies that both $\rho''(r)$ and $\rho'(r)$ are also bounded when $r$ approaches the blow-up point $R(\phi, A)$ from the right, according to the equation \eqref{eqn of rho}. This contradicts with a basic theory of ordinary differential equations.
\end{proof}

\subsection{Classification of the Ground States} \label{sec: classification}

The ground states $W$ obtained via Proposition \ref{existence of the ground state} can be classified into three categories
\begin{itemize}
 \item[(I)] The ground state $W$ is only defined for points a certain distance away from the origin, i.e. $R(\phi, A) > 0$. Proposition \ref{existence of the ground state} guarantees that $W(x)$ is unbounded when $|x| \rightarrow R(\phi, A)^+$. Therefore $W$ is not in the space $\dot{H}^{s_p} (\Rm^3)$, thanks to Lemma \ref{pointwise radial Hs}. More precisely, it is impossible to find a radial function $u \in \dot{H}^{s_p}(\Rm^3)$, such that $u(x) = W(x)$ for all $x$ with $|x|> R(\phi, A)$.
 \item[(II)] The ground state $W$ is well-defined everywhere except for at the origin . But we have
 \[
  \limsup_{|x| \rightarrow 0^+} |x|^{\frac{3}{2} -s_p}|W(x)| > 0.
 \]
According to Lemma \ref{uniform decay for compact set}, we know $W \notin \dot{H}^{s_p} (\Rm^3)$. Examples of this type are given by the special case $\phi(x) \equiv 1$. Please see Section 9 of \cite{shen2} for more details.
\item[(III)] The ground state $W$ is well-defined on $\Rm^3 \setminus \{0\}$, and satisfies
\[
  \lim_{|x| \rightarrow 0^+} |x|^{\frac{3}{2} -s_p}|W(x)| = 0.
 \]
It turns out that this ground state $W$ must be $C^2$ in the whole space $\Rm^3$, as shown in the Proposition \ref{C2 smooth at the origin} below. A Combination of this $C^2$ smoothness with the decay rate of the gradient $\nabla W$ near infinity guarantees that $W \in \dot{W}^{1,q}$ for all $q > 3/2$. By the Sobolev embedding we have $W \in \dot{H}^s (\Rm^3)$ for all $s \in (1/2,1]$, in particular for $s = s_p$. One example in this category can be given explicitly by the function
\[
 W(x) = \frac{3^{2/(p-1)}}{\sqrt{3|x|^2 + 1}},
\]
which solves the elliptic equation $-\Delta W = 3^{-2(5-p)/(p-1)} W^5$. As a result, it also solves the elliptic equation $-\Delta W = \phi_p (x) |W|^{p-1} W$ if we choose
\[
 \phi_p (x) = 3^{-2(5-p)/(p-1)}|W(x)|^{5-p} = (3|x|^2 + 1)^{(p-5)/2}.
\]
\end{itemize}

\begin{remark} \label{cate 3}
It turns out that if Proposition \ref{existence of the ground state} does not give any nontrivial $\dot{H}^{s_p}(\Rm^3)$ ground state, i.e. the only ground state $W_A(x)$ constructed above that falls in category (III) is the trivial one $W_0(x) \equiv 0$, then the elliptic equation $-\Delta W = \phi(x) |W|^{p-1} W$ will not admit any nontrivial radial solution in the space $\dot{H}^{s_p} (\Rm^3)$ at all. This will be proved in Section \ref{sec: final}.
\end{remark}

\begin{remark}
The existence of nontrivial radial solutions to the elliptic equation $-\Delta u = \phi(x) |u|^{p-1} u$ and their asymptotic behaviour as $|x| \rightarrow \infty$ has been considered in some previous works such as \cite{NY, nscon}. In particular, T. Kusano and M. Naito claim that if $\phi: [0,\infty) \rightarrow R^+$ satisfies
\begin{itemize}
\item $\phi(r) \in C[0,\infty) \cap C^1 (0,\infty)$;
\item $\frac{d}{dr}\left[r^{(5-p)/2} \phi(r)\right]$ is nonnegative for all $t>0$ but not identically zero;
\end{itemize}
then every radial $C^2$ solution to the elliptic equation  $-\Delta u = \phi(x) |u|^{p-1} u$ is oscillatory, i.e. it has a zero in any neighbourhood of infinity. It is clear that none of these solutions can share the same asymptotic behaviour as $A/|x|$, as long as $A \neq 0$. Therefore for these $\phi$'s Proposition \ref{existence of the ground state} does not give any nontrivial $\dot{H}^{s_p}(\Rm^3)$ ground state. In other words, the elliptic equation $-\Delta W = \phi(x) |W|^{p-1} W$ does not admit any nontrivial radial solution in the space $\dot{H}^{s_p} (\Rm^3)$. Please see their work \cite{KN1} and citation therein for more details.
\end{remark}

\begin{lemma} \label{almost local boundedness}
Let $W \in C^2 (B(0,r_0)\setminus \{0\})$ be a radial solution to the elliptic equation $-\Delta W = q(|x|) W$, where $q(r)$ is a continuous function defined on $(0,r_0)$ satisfying $\displaystyle \lim_{r \rightarrow 0^+} r^2 q(r) = 0$. In addition, we assume that there is a constant $\eps \in (0,1/2)$, such that $\lim_{|x| \rightarrow 0^+} |x|^{1-\eps} W(x) = 0$. Then there exist two constants $r_1 \in (0,r_0)$ and $C_1 > 0$, such that the inequality $|W(x)| \leq C_1 |x|^{-\eps}$ holds for all $0 < |x| < r_1$.
\end{lemma}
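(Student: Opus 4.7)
The plan is to convert the radial ODE into a contractive integral equation on weighted function spaces, and to exploit the restriction $\eps<1/2$ through an inclusion between two such spaces. Set $u(r):=r^2W'(r)$; the radial identity $\Delta W=r^{-2}(r^2W')'$ and $-\Delta W=q(r)W$ give $u'(r)=-r^2q(r)W(r)$. Combining the hypothesis $|W(r)|=o(r^{\eps-1})$ with $r^2|q(r)|\to 0$, one finds $|u'(r)|=o(r^{\eps-1})$, which is integrable near $0$. Hence $u$ extends continuously to $r=0$ with some limit $L$; if $L\neq 0$, then $W'(r)\sim L/r^2$ forces $W(r)\sim -L/r$, whence $rW(r)\to-L\neq 0$, contradicting the identity $rW(r)=r^\eps\cdot r^{1-\eps}W(r)\to 0$. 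Therefore $L=0$, and integrating first from $0$ to $r$ and then from $r$ to a fixed $r_1\in(0,r_0)$ yields the integral equation
\[
W(r)=W(r_1)+TW(r),\qquad TW(r):=\int_r^{r_1}\frac{1}{t^2}\int_0^t s^2q(s)W(s)\,ds\,dt,\quad r\in(0,r_1].
\]

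For $\alpha\in(0,1)$, introduce the weighted Banach space
\[
X_\alpha:=\Bigl\{f\in C\bigl((0,r_1]\bigr)\,:\,\|f\|_\alpha:=\sup_{0<r\le r_1}r^\alpha|f(r)|<\infty\Bigr\},
\]
and let $\eta:=\sup_{0<s\le r_1}s^2|q(s)|$, which tends to $0$ as $r_1\to 0^+$ by the hypothesis on $q$. Using $|s^2q(s)f(s)|\le\eta\,\|f\|_\alpha\,s^{-\alpha}$ together with Fubini, a direct computation gives the contraction estimate
\[
\|Tf\|_\alpha\le\frac{\eta}{\alpha(1-\alpha)}\,\|f\|_\alpha.
\]
Fix $r_1$ small enough that $\eta<\eps(1-\eps)$. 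Then the affine map $\Phi(f):=W(r_1)+Tf$ is a strict contraction on both $X_\eps$ and $X_{1-\eps}$, each of which contains the constant $W(r_1)$; Banach's fixed-point theorem thus produces a unique fixed point of $\Phi$ in each of these two spaces.

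To finish, the hypothesis $|x|^{1-\eps}W(x)\to 0$ combined with the continuity of $W$ on $[r_1/2,r_1]$ puts the restriction of $W$ to $(0,r_1]$ in $X_{1-\eps}$; since $W$ also satisfies the integral equation, it is the unique fixed point of $\Phi$ in $X_{1-\eps}$. On the other hand, because $\eps<1/2$, any $f\in X_\eps$ obeys $r^{1-\eps}|f(r)|\le\|f\|_\eps\,r^{1-2\eps}\le\|f\|_\eps\,r_1^{1-2\eps}$, so $X_\eps\hookrightarrow X_{1-\eps}$. Hence the unique fixed point of $\Phi$ in $X_\eps$ also lies in $X_{1-\eps}$, and by uniqueness there it must coincide with $W$. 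Therefore $W\in X_\eps$, which is exactly the claimed bound $|W(x)|\le C_1|x|^{-\eps}$ for $0<|x|<r_1$.

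The technically delicate point is showing $r^2W'(r)\to 0$ as $r\to 0^+$, which is what unlocks the clean integral representation; after that, the weighted contraction estimate and the embedding $X_\eps\hookrightarrow X_{1-\eps}$ are elementary. The whole strategy leans crucially on $\eps<1/2$, which is precisely the condition that allows one to transfer uniqueness from the larger space $X_{1-\eps}$ to the sharper space $X_\eps$.
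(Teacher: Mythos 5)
Your proof is correct, and it takes a genuinely different route from the paper's. The paper works with the auxiliary function $v(r)=r^{1-\eps}W(r)$, derives the ODE $v''+\tfrac{2\eps}{r}v'=\eta(r)v$ with $\eta>0$ near the origin, and then runs a Sturm-type maximum-principle argument: $v$ can have no positive interior max or negative interior min, hence is one-signed, hence $v'$ is one-signed, and finally integrating the logarithmic derivative inequality $\tfrac{d}{dr}\ln v' > -2\eps/r$ yields $v(r)\lesssim r^{1-2\eps}$, i.e.\ $|W|\lesssim r^{-\eps}$. You instead pass to the first-order variable $u=r^2W'$, establish $u(0^+)=0$ by an asymptotic contradiction (the step you correctly flag as the delicate one), rewrite $W$ as the fixed point of a Volterra-type affine map $\Phi(f)=W(r_1)+Tf$, and show $T$ contracts on each weighted space $X_\alpha$ with norm $\le \eta/(\alpha(1-\alpha))$. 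The hypothesis places $W$ in $X_{1-\eps}$; the embedding $X_\eps\hookrightarrow X_{1-\eps}$ (valid precisely because $1-2\eps>0$) forces the unique $X_\eps$ fixed point to coincide with $W$, upgrading $W$ to $X_\eps$. Both arguments hinge on $\eps<1/2$ — the paper through the convergence of $\int_0^r s^{-2\eps}ds$, you through the embedding — but your fixed-point scheme sidesteps the sign-classification steps of the paper, which require ruling out degenerate cases (zeros of $v$ accumulating at the origin, $v$ vanishing on a subinterval) via ODE uniqueness. The trade-off is that the paper's method is more elementary and self-contained, whereas yours is more modular and mechanical once the boundary condition $u(0^+)=0$ is secured.

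Two small points worth stating explicitly if you write this up: (i) the completeness of $X_\alpha$ (a Cauchy sequence in $\|\cdot\|_\alpha$ converges uniformly after multiplication by $r^\alpha$, and the limit is again continuous with finite weighted sup norm), which is needed for Banach's theorem; and (ii) that shrinking $r_1$ changes the operator $T$ (both the integration limit and $\eta$), so the contraction must be set up after the final choice of $r_1$ has been made, with the integral representation of $W$ rederived on the shrunken interval — this is harmless since $u(0^+)=0$ holds independently of $r_1$, but it deserves a sentence.
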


\begin{proof}
This is trivial if $W \equiv 0$, thus we assume $W$ is not identically zero in any neighbourhood of the origin. First of all, we define a new function $v: (0,r_0) \rightarrow \Rm$ by $v(|x|) = |x|^{1-\eps} W(x)$. According to the assumption on $W$ we have $v(r) \rightarrow 0$ as $r \rightarrow 0^+$. A basic calculation shows that $v$ satisfies the equation
\begin{equation}
  v''(r) + \frac{2\eps}{r} v'(r) =\eta (r) v(r). \label{equation of v}
\end{equation}
Here $\displaystyle \eta (r) = \frac{\eps (1-\eps)}{r^2} - q(r)$. By the assumption on $q(r)$, there exists a small positive number $r_1 \in (0,r_0)$, such that $\eta (r) > 0$ for all $r \in (0,r_1)$.

\paragraph{Step 1} We claim that $v(r)$ has neither a positive local maximum nor a negative local minimum on $(0,r_1)$. If $v(r)$ had a positive local maximum at $r = r_2 \in (0,r_1)$, then we would have $v''(r_2) \leq 0$, $v'(r_2) = 0$ and $\eta(r_2) v(r_2) > 0$. This violates the equation \eqref{equation of v}. The same argument rules out the existence of any negative local minimum.

\paragraph{Step 2} The function $v(r)$ is either always positive or always negative in the interval $(0,r_1)$. If this were false, then by continuity we would find a number $r_2 \in (0,r_1)$, so that $v(r_2) = 0$. Since $v(r)$ is a nontrivial $C^2$ function defined on $(0,r_2]$ with $\displaystyle \lim_{r \rightarrow 0^+} v(r) = 0$ and $v(r_2) = 0$, it must have either a local positive maximum or a local negative minimum in the interval $(0,r_2)$. This is a contradiction. Without loss of generality, we assume $v(r) > 0$ for all $r \in (0,r_1)$.

\paragraph{Step 3} The derivative $v'(r) > 0$ for all $r \in (0, r_1)$. In fact, a negative derivative $v'(r_2) < 0$ at a point $r_2 \in (0,r_1)$ would imply the existence of a positive local maximum in the interval $(0,r_2)$, since we have $v(r) > 0$ for $r \in (0,r_1)$ and $v(r) \rightarrow 0$ as $r \rightarrow 0$. Therefore we have $v'(r) \geq 0$ for all $r \in (0,r_1)$. Furthermore, if $v'(r_2) = 0$ at a point $r_2 \in (0,r_1)$, then this point would be a local minimum of $v'(r)$ thus $v''(r_2) = 0$. Again this contradicts with the equation \eqref{equation of v}, since we have assumed $v(r_2) > 0$.

\paragraph{Step 4} Now in the interval $(0,r_1)$ we can rewrite \eqref{equation of v} into an inequality
\[
 v''(r) + \frac{2\eps}{r} v'(r) > 0 \quad \Longrightarrow \quad \frac{d}{dr}\left\{\ln [v'(r)]\right\} > \frac{-2\eps}{r}.
\]
Integrating this from $r$ to $r_1$, we obtain
\[
 \ln[v'(r_1)] - \ln[v'(r)] > -2\eps \ln r_1 + 2\eps \ln r\quad \Longrightarrow \quad v'(r) < C r^{-2\eps}, \quad \hbox{if}\; 0 < r < r_1.
\]
Here $C = r_1^{2\eps} v'(r_1)$. Combining this inequality with $\displaystyle \lim_{r\rightarrow 0^+} v(r) = 0$, we obtain that if $0 < r = |x|< r_1$, then
\begin{equation}
 0 < v(r) < C_1 r^{1-2\eps} \quad \Longrightarrow \quad | W(x)| < C_1 |x|^{-\eps}.  \label{upper bound of y}
\end{equation}
Here $C_1 = C/(1-2\eps)$.
\end{proof}

\begin{proposition} \label{C2 smooth at the origin}
Fix $p \in (3,5]$. Let $W \in C^2 (\Rm^3 \setminus \{0\})$ be a radial solution to the elliptic equation $-\Delta W = \phi(x) |W|^{p-1} W$ so that
\[
 \lim_{|x| \rightarrow 0^+} |x|^{\frac{3}{2} -s_p}|W(x)| = 0.
\]
Then we can extend the domain of $W$ to the whole space $\Rm^3$ by continuity so that $W \in C^2 (\Rm^3)$ gives a classic solution to the elliptic equation above.
\end{proposition}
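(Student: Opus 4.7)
The plan is a bootstrap via Lemma \ref{almost local boundedness} followed by a one-dimensional ODE analysis of $w(r) := rW(r)$. First, set $q(r) := \phi(r)|W(r)|^{p-1}$, so that the equation reads $-\Delta W = q(|x|)W$; the identity
\[
r^{2} q(r) = |\phi(r)|\bigl(r^{2/(p-1)}|W(r)|\bigr)^{p-1}
\]
and the hypothesis give $r^{2} q(r) \to 0$ as $r\to 0^{+}$, so Lemma \ref{almost local boundedness} is available whenever the auxiliary condition $|x|^{1-\eps}|W(x)| \to 0$ can be verified for some $\eps \in (0,1/2)$.

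For the first application I choose $\eps_{0} \in (0,1/2)$ with $1 - \eps_{0} > 2/(p-1)$, which is possible because $(p-3)/(p-1) > 0$ for $p>3$. Then
\[
|x|^{1-\eps_{0}}|W(x)| = |x|^{1-\eps_{0}-2/(p-1)}\cdot|x|^{2/(p-1)}|W(x)| \to 0,
\]
and the lemma yields $|W(x)| \leq C|x|^{-\eps_{0}}$ near $0$. Since now $1 - \eps_{0} > 1/2$, the auxiliary condition holds for \emph{every} $\eps \in (0, 1/2)$, so a second application of the lemma with any $\eps < 1/(p-1)$ refines this to $|W(x)| \leq C_{\eps}|x|^{-\eps}$ with the key relation $\eps(p-1) < 1$.

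Next, the radial form of the equation is $w''(r) = -\phi(r)\, r^{1-p}|w(r)|^{p-1}w(r)$. From $|W(r)| \leq C r^{-\eps}$ I obtain $|w(r)| \leq C r^{1-\eps}$ and $|w''(r)| \leq C r^{-\eps(p-1)}$, and since $\eps(p-1) < 1$ the function $w''$ is integrable near $0$. Hence $w'$ has a finite limit $L$ at the origin and $w(r) \to 0$, so $W(r) = w(r)/r \to L$, extending $W$ continuously to $0$ by setting $W(0) := L$. With $W$ continuous and bounded, $g(r) := \phi(r)|W(r)|^{p-1}W(r)$ is continuous on a closed ball about $0$; rewriting the radial equation as $(r^{2} W')' = -r^{2} g(r)$ and using $r^{2}|W'(r)| \lesssim r^{2-\eps(p-1)} \to 0$, integration yields $W'(r) = -r^{-2}\int_{0}^{r} s^{2} g(s)\,ds$. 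Continuity of $g$ at $0$ then forces $W'(r) \to 0$ and $W'(r)/r \to -g(0)/3$, and substituting back into the equation gives $W''(r) \to -g(0)/3$. Thus $W \in C^{2}$ in a neighbourhood of the origin and solves the elliptic equation classically there, with $-\Delta W(0) = \phi(0)|W(0)|^{p-1}W(0)$.

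The main obstacle is the first application of Lemma \ref{almost local boundedness}: the ceiling $\eps < 1/2$ is a hard constraint built into its proof (integrability of $r^{-2\eps}$), while the borderline hypothesis $|x|^{2/(p-1)}|W(x)| \to 0$ only delivers the auxiliary condition when $1 - \eps > 2/(p-1)$. These two constraints overlap precisely because $p > 3$, but the overlap becomes narrow as $p \to 3^{+}$, which is why a single application of the lemma must be followed by a second iteration before the ODE analysis can be invoked. Once this foothold is secured, the remaining steps are routine.
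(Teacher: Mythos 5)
Your proof is correct and follows essentially the same route as the paper: apply Lemma \ref{almost local boundedness} to upgrade the hypothesis $|x|^{2/(p-1)}|W|\to 0$ to a power bound $|W(x)|\leq C|x|^{-\eps}$, then analyze the one-dimensional ODE for $w=rW$ to extend $W$ continuously (and then twice differentiably) across the origin.

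Two small points. First, the two-step iteration of the lemma is unnecessary: since the only lower constraint on $\eps$ is $\eps>0$, one can take $\eps<\min\{1/2,\,1-2/(p-1),\,1/(p-1)\}$ in a single application (this is what the paper does, with $\eps<\min\{1/p,\,1-2/(p-1)\}$); your reasoning about the ``overlap becoming narrow as $p\to 3^+$'' is actually reversed, since a narrower window forces a smaller $\eps$, which only helps. Second, from $w''=-r\phi(r)|W|^{p-1}W$ and $|W|\leq Cr^{-\eps}$ one gets $|w''|\leq C^p\,r^{1-p\eps}$, not $Cr^{-\eps(p-1)}$ — you appear to have dropped a factor of $|w|\leq Cr^{1-\eps}$. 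The correct exponent is strictly larger, so $w''$ is in fact bounded (not merely integrable) under your choice $\eps<1/(p-1)$, and all downstream estimates still hold. Neither slip affects the conclusion. Your explicit final step via $(r^2W')'=-r^2g$ is if anything cleaner than the paper's terse ``gain two derivatives by basic knowledge in Laplace's equation.''
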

\begin{proof}
Let us choose a small constant $0 < \eps < \min\{1/p, 1-\frac{2}{p-1}\}$. The conditions on $W$ enable us to apply Lemma \ref{almost local boundedness} with $q(|x|)  = \phi(x) |W(x)|^{p-1}$ and to obtain an estimate $|W(x)| \leq C_1 |x|^{-\eps}$ for small $|x| < r_1$. For simplicity we define $y: \Rm^+ \rightarrow \Rm$ by $y(|x|) = W(x)$. A basic calculation shows that $y$ satisfies the equation
\begin{equation}
 (ry)'' + r \phi(r) |y|^{p-1} y = 0. \label{second form of y}
\end{equation}
In addition, the upper bound on $W$ gives $|y(r)| \leq C_1 r^{-\eps}$. We combine this estimate with our assumption $p \eps < 1$ and obtain that $|(ry)''| = r|\phi(r)| |y|^p \leq C_1^p$ is bounded for all $0 < r < \min\{r_1,1\}$. As a result, the limit
\[
\lim_{r \rightarrow 0^+} (ry)' = C_2
\]
exists with $\left|(ry)' - C_2\right| \leq C_1^p r$ for small $r$. A basic integration then shows
\[
 \left|ry(r)-C_2 r\right| \leq \frac{C_1^p}{2} r^2 \quad \Longrightarrow \quad \left|y(r)- C_2\right| \leq \frac{C_1^p}{2} r.
\]
Therefore the function $W$ extends to a continuous function on $\Rm^3$.  Since the right hand of $-\Delta W = \phi(x) |W|^{p-1} W$ is continuous, we can gain two derivatives and conclude $W \in C^2 (\Rm^3)$ by basic knowledge in Laplace's equation.
\end{proof}

\section{Non-existence of a Critical Element}

Given any critical element $u$, Proposition \ref{enery channel 2} guarantees the existence of a real number $A$, so that we have
\[
 \left|u(x,t) - \frac{A}{|x|}\right| \lesssim \frac{1}{|x|^{p-2}}.
\]
when $x$ is large. According to Proposition \ref{existence of the ground state}, we also have a solution $W_A (x)$ to the elliptic equation $-\Delta W = \phi (x) |W|^{p-1} W$ with the same asymptotic behaviour when $|x| \rightarrow \infty$:
\[
 \left|W_A(x) - \frac{A}{|x|}\right| \lesssim \frac{1}{|x|^{p-2}}.
\]
Therefore the critical element $u(x,t)$ and the ground state $W_A(x)$ are close to each other when $|x|$ is large:
\[
 \left|u(x,t) - W_A (x) \right| \lesssim \frac{1}{|x|^{p-2}}.
\]
Our goal is to show $u (x,t) \equiv W_A (x)$. The argument consists of two steps. In the first step we show the identity holds for very large $x$. Then in the second step we prove that the identity has to hold for all $x, t$ wherever $W_A(x)$ is still defined. This immediately gives a contradiction unless $W_A(x) \in \dot{H}^{s_p} (\Rm^3)$, thus finishes the proof of our main theorem. Each step mentioned above is summarized into a theorem, which works for a more general nonlinear term as well. Both theorems are given in Subsection \ref{sec: abstract th} and can be proved by the ``channel of energy'' method in exactly the same way as in the special case $\phi (x) \equiv 1$. Thus we will give the statements only and omit the details of proof. Please see \cite{secret} and Section 8 of \cite{shen2} for more details.

\subsection{Abstract Theorems} \label{sec: abstract th}

\paragraph{Assumptions} Assume $3 < p < 5$. Let $W \in C^2 (\{x \in \Rm^3: |x| > R_W\})$ be a radial solution to the elliptic equation
\[
 -\Delta W = F(|x|, W),
\]
where $F: [0,\infty) \times \Rm \rightarrow \Rm$ is a continuous function satisfying
\begin{align*}
 \left| F(r, u) \right| & \leq |u|^p;\\
 \left| F(r,u_1) - F(r,u_2) \right| & \leq C_5 \left|u_1 - u_2\right|\left(|u_1|^{p-1} + |u_2|^{p-1}\right);
\end{align*}
so that the inequalities $\displaystyle |W(x)| \lesssim \frac{1}{|x|}$,  $\displaystyle |\nabla W(x)| \lesssim \frac{1}{|x|^2}$ hold when $|x|$ is large. We say $u(x,t)$ is a solution to the equation $\partial_t^2 u -\Delta u = F(|x|,u)$ in the time interval $I$, if $(u(\cdot,t), \partial_t u(\cdot,t)) \in C(I; \dot{H}^{s_p} \times \dot{H}^{s_p-1} (\Rm^3))$, with a finite norm $\|u\|_{Y(J)} < \infty$ for any closed bounded interval $J \subseteq I$, so that the integral equation
\[
 u(\cdot, t) = {\mathbf S}_{L,0} (t)(u(\cdot,t_0),\partial_t u(\cdot,t_0)) + \int_{t_0}^t \frac{\sin [(t-\tau)\sqrt{-\Delta}]}{\sqrt{-\Delta}} F(\cdot, u(\cdot, \tau)) d\tau
\]
holds for all $t, t_0 \in I$.

\begin{theorem} \label{identity near infinity}
Let $W(x)$ and $F(r,u)$ be as above . Suppose $u(x,t)$ is a radial solution to the equation $\partial_t^2 u - \Delta u = F(|x|, u)$ defined for all $t \in \Rm$ so that
\begin{itemize}
  \item[(I)] %The solution is in the energy space at least locally.
  The following inequality holds for each $t\in \Rm$ and $r>0$.
  \begin{equation}
   \int_{r <|x|<4r} \left(|\nabla u(x,t)|^2 + |\partial_t u(x,t)|^2 \right) dx \leq C_4 r^{-1}.
  \end{equation}
  \item[(II)] The functions $u(x,t)$ and $W(x)$ are very close to each other as $|x|$ is large.
  \begin{equation}
   |u (x,t) - W (x)| \lesssim \frac{1}{|x|^{p-2}}.
  \end{equation}
\end{itemize}
Then there exists a constant $R_0 > R_W$ such that \footnote{In fact, the constant $R_0$ is determined solely by $p, C_4, C_5, W$.}
\begin{align*}
 &u(x, t) - W(x) = 0,& & u_1(x, t)= 0&
\end{align*}
hold for all $t\in \Rm$ and $|x| > R_0$.
\end{theorem}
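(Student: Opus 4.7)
The plan is to apply the channel of energy method as in Section 8 of \cite{shen2}, which handles the analogous statement in the pure-power case $\phi \equiv \pm 1$. First I would introduce the difference
\[
v(x,t) := u(x,t) - W(x),
\]
defined on $\{|x| > R_W\} \times \Rm$. Then $v$ satisfies
\[
\partial_t^2 v - \Delta v = G(x,t), \qquad G(x,t) := F(|x|, u(x,t)) - F(|x|, W(x)).
\]
The Lipschitz hypothesis on $F$, together with the pointwise bounds $|W(x)| \lesssim |x|^{-1}$ (by assumption) and $|u(x,t)| \lesssim |x|^{-1}$ (a direct consequence of hypothesis (II) and $p > 3$), gives $|G(x,t)| \lesssim |v(x,t)|/|x|^{p-1}$ in the exterior of a large ball. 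Since $p - 1 > 2$, the source $G$ acts as a strictly subcritical potential-type perturbation at spatial infinity.

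The main tool is the radial 3D channel of energy estimate of Duyckaerts--Kenig--Merle: for radial data $(f, g) \in \dot{H}^1 \times L^2(\{|x|>R\})$ with free-wave evolution $v_L$,
\[
\max_\pm \lim_{t \to \pm\infty} \int_{|x|>R+|t|} |\nabla_{t,x} v_L(x,t)|^2 \, dx \;\geq\; \frac{1}{2} \bigl\|\pi_R^\perp (f,g)\bigr\|^2_{\dot{H}^1 \times L^2(\{|x|>R\})},
\]
where $\pi_R^\perp$ is the orthogonal projection away from the one-dimensional non-radiating subspace spanned by $(1/|x|, 0)$. I would fix an arbitrary $t_0 \in \Rm$ and apply this to $(f,g) := (v(\cdot,t_0), \partial_t v(\cdot, t_0))|_{\{|x|>R_0\}}$ for a large $R_0 > R_W$ to be chosen. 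Hypothesis (I) yields $\|(f,g)\|^2 \lesssim R_0^{-1}$, while hypothesis (II) with $p-2 > 1$ implies $|x|\,v(x,t_0) \to 0$ and that the projection coefficient onto $(1/|x|,0)$ has size $O(R_0^{-(p-3)})$. Thus the non-radiating contribution to $\|(f,g)\|^2$ is of order $O(R_0^{-(2p-5)})$, negligible compared to $R_0^{-1}$ for $p > 3$.

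The concluding step is to compare $v$ with its linearization $v_L$. Duhamel's formula expresses $v - v_L$ as the retarded integral of $G$, and a weighted exterior Strichartz/energy estimate, exploiting the pointwise bound $|G| \lesssim |v|/|x|^{p-1}$ together with the uniform bound from (I), controls the $\dot{H}^1 \times L^2$ norm of $v - v_L$ on the expanding cone $\{|x| > R_0 + |t-t_0|\}$ by a small constant times $\|(f,g)\|^2$, provided $R_0$ is taken large enough depending only on $p, C_4, C_5$, and the profile of $W$. The exterior energy of $v_L$ itself on the expanding cone is dominated by $\|(f,g)\|^2$ via strong Huygens' principle (after extending $(f,g)$ to a global $\dot{H}^1 \times L^2$ pair in a norm-equivalent way). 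Plugging these two bounds into the channel-of-energy inequality yields an estimate of the form $\|(f,g)\|^2 \leq \varepsilon \|(f,g)\|^2 + \text{negligible}$, which after absorbing the small term forces
\[
\bigl\|(v(\cdot,t_0), \partial_t v(\cdot, t_0))\bigr\|_{\dot{H}^1 \times L^2(\{|x|>R_0\})} = 0
\]
for every $t_0 \in \Rm$. Both identities claimed in the conclusion follow.

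The hardest step is the weighted Strichartz/energy comparison of $v$ and $v_L$ in the exterior of the light cone: one must use the rapid decay of the effective potential $|x|^{-(p-1)}$ and the geometry of the expanding cone to produce a constant genuinely absorbable into the $\tfrac{1}{2}$ coefficient on the right side of the channel-of-energy estimate. This is precisely the technical point treated in \cite{secret} and Section 8 of \cite{shen2}; since the pointwise bounds on $u$, $W$, and $v$, the structure of $G$, and the Lipschitz estimate on $F$ are identical to the pure-power case, the argument transfers without substantive modification, which is why the present paper only states the result.
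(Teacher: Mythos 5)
Your overall strategy is the right one and matches what the paper (which here only cites \cite{secret} and Section~8 of \cite{shen2} and gives no proof) is alluding to: set $v=u-W$, note that it solves a wave equation with a source $G$ satisfying $|G|\lesssim |v|/|x|^{p-1}$, and run the exterior (``channel of energy'') estimate of Duyckaerts--Kenig--Merle against the uniform bound~(I) and the decay~(II), using the almost-orthogonality to the one-dimensional non-radiating mode $(|x|^{-1},0)$.

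Two points in your concluding step do not go through as written. First, the ``upper bound on the exterior energy of $v_L$'' that you invoke is only the trivial $O(\|(f,g)\|^2)$ bound from energy conservation/Huygens; what the argument actually needs is that the exterior energy of the \emph{nonlinear} solution $v$ on the expanding cone tends to $0$ (which follows from~(I) since $\|(v,\partial_t v)\|^2_{\dot H^1\times L^2(|x|>R+|t-t_0|)}\lesssim (R+|t-t_0|)^{-1}$), and then $v_L$'s channel energy is estimated by $\|v-v_L\|^2_{\mathrm{channel}}+\text{(channel energy of $v$)}\lesssim \varepsilon\|(f,g)\|^2+o(1)$. You never use the vanishing of $v$'s channel energy, so your two bounds combine only to the tautology $\tfrac12\|\pi_R^\perp(f,g)\|^2\leq \|(f,g)\|^2$. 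Second, and more seriously, even after this is repaired the absorption yields
\[
\|(f,g)\|^2 \;=\; \|\pi_{R_0}^\perp(f,g)\|^2 + \|\pi_{R_0}(f,g)\|^2 \;\leq\; \varepsilon\,\|(f,g)\|^2 + O\!\left(R_0^{-(2p-5)}\right),
\]
which gives $\|(f,g)\|^2\lesssim R_0^{-(2p-5)}$ --- a quantitative improvement over the a priori $R_0^{-1}$, not the claimed identity $\|(f,g)\|=0$. The $O(R_0^{-(2p-5)})$ term comes from hypothesis~(II) via $\pi_{R_0}(f,g)\sim R_0\,v(R_0,t_0)\,(|x|^{-1},0)$ and is genuinely nonzero unless $v(R_0,t_0)=0$, which is exactly what one is trying to prove. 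In \cite{secret} and Section~8 of \cite{shen2} this is precisely where the substantive work lies: one must iterate the channel-of-energy/perturbation estimate over dyadic scales to upgrade the decay rate of $v$, and then run a separate argument (tracking the coefficient of the non-radiating mode, essentially a difference/ODE argument for $r\mapsto r\,v(r,t_0)$) to convert super-polynomial smallness into exact vanishing for $|x|>R_0$. Your sketch defers this to the references, which is fine in spirit, but the sentence ``after absorbing the small term forces $\|(f,g)\|=0$'' asserts something that a single absorption does not deliver and should be replaced by the bootstrap.

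Minor: the decay exponent you quote for the projection coefficient ($O(R_0^{-(p-3)})$) and its squared-norm contribution ($O(R_0^{-(2p-5)})$) are consistent, and your observation that $2p-5>1$ for $p>3$ is exactly why the bootstrap has room to run; so the ingredients you have in hand are the right ones, just not yet assembled into a closed argument.
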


\paragraph{Essential Radius of Support} If the pairs $(u(x,t), u_1(x,t))$ and $(W(x),0)$ coincide for large $x$, we can define the essential radius of support for their difference by
\[
 R(t) = \min \{R \geq R_W: (u(x,t) - W (x), \partial_t u(x,t))=(0,0)\, \hbox {holds for}\, |x|> R\}.
\]

\begin{theorem} \label{behaviour of compact solution}
Let $W(x)$, $F(r,u)$ be as above and $I$ be a time interval containing a neighbourhood of $t_0$. Suppose $u(x,t)$ is a radial solution of the equation $\partial_t^2 u - \Delta u = F(|x|, u)$
on a time interval $I$ satisfying
\begin{itemize}
  \item [(I)] $(u(x,t), \partial_t u(x,t)) \in C(I; \dot{H}^1 \times L^2 (\Rm^3 \setminus B(0,R_W)))$.
  \item [(II)] The pair $(u(x,t_0) - W (x), \partial_t u(x,t_0))$ is compactly supported with an essential radius of support $R(t_0) > R_1 > R_W$.
\end{itemize}
Then there exists a positive constant $\tau$, which is solely determined by $p, R_1, C_5$ and $W$, such that the identity
\[
 R(t) = R(t_0) + |t-t_0|
\]
holds either for each $t \in [t_0, t_0+ \tau]\cap I$ or for each $t \in [t_0-\tau,t_0]\cap I$.
\end{theorem}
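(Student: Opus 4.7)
Set $v(x,t) = u(x,t) - W(x)$, which solves
\[
\partial_t^2 v - \Delta v = N(|x|, v), \qquad N(|x|, v) = F(|x|, W+v) - F(|x|, W),
\]
with $|N(|x|,v)| \leq C_5 |v|(|W|^{p-1} + |v|^{p-1})$. Finite speed of propagation applied to this inhomogeneous radial wave equation with initial data at $t_0$ immediately gives $R(t) \leq R(t_0) + |t-t_0|$ for all $t \in I$; the content of the theorem is the reverse inequality on at least one side of $t_0$, uniformly in $u$ on a time scale depending only on $p, R_1, C_5, W$.

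My plan is to use the one-dimensional reduction $w(r,t) = r v(r,t)$, which satisfies the half-line wave equation $\partial_t^2 w - \partial_r^2 w = r N(r, v)$ with Dirichlet condition at $r=0$. Writing the free evolution of the data $(w(\cdot, t_0), \partial_t w(\cdot, t_0))$ in null form, $w_L(r,t) = A_+(r - (t-t_0)) + A_-(r + (t-t_0))$, the minimality of $R(t_0)$ combined with Lemma \ref{eqn between w and u} forces at least one of the null profiles $A'_+, A'_-$ to be non-negligible in every left neighbourhood $(R(t_0) - \eps, R(t_0))$ of $R(t_0)$. Otherwise $v(\cdot, t_0)$ would vanish on a shell of positive thickness ending at $R(t_0)$, contradicting the definition of the essential radius.

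Suppose the non-negligible profile is the one propagating outward in forward time; the opposite case gives the backward half of the dichotomy by the same argument. Then at times $t \in [t_0, t_0 + \tau]$, the free part of $w$ transports this signal along the light cone $r = R(t_0) + (t-t_0)$, so the free evolution of $v$ has nontrivial mass arbitrarily close to the sphere $|x| = R(t_0) + (t-t_0)$. I would bound the Duhamel correction $w - w_L$ by the representation formula \eqref{explicit one d wave}, using $|W(x)| \lesssim |x|^{-1}$ and the $p>3$ hypothesis so that $r|N(r,v)| \lesssim r \cdot |v| \cdot r^{-(p-1)}$ has an integrable tail along the outgoing light cone. A short-time fixed point, patterned after Section~8 of \cite{shen2}, produces $\tau = \tau(p, R_1, C_5, W) > 0$ such that the Duhamel term is strictly smaller than the free emission near the cone, forcing $R(t) = R(t_0) + (t - t_0)$ on $[t_0, t_0 + \tau] \cap I$.

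The principal obstacle is quantitative: the time $\tau$ must not see either $u$ or the actual value $R(t_0)$, only the lower bound $R(t_0) > R_1$. This is achieved because the outgoing radiation stays in $\{|x| \geq R_1 + (t-t_0)\}$, a region on which $W$ and the linearization of $N$ in $v$ are controlled by quantities determined by $R_1$ and $W$ alone; the $p > 3$ exponent is what converts the slow $|x|^{-1}$ decay of $W$ into an integrable coefficient in the Duhamel integral along the light cone. This uniform control is precisely what makes the channel-of-energy method of \cite{secret, shen2} transfer to the variable-coefficient setting here.
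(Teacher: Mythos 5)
Your plan is correct and is precisely the channel-of-energy argument that the paper invokes for this theorem without reproducing it, citing instead \cite{secret} and Section~8 of \cite{shen2}. The null decomposition of $w = r(u - W)$, the observation that minimality of $R(t_0)$ forces one of $A'_\pm$ to be nontrivial in every left neighbourhood of $R(t_0)$, and the short-time perturbation on the exterior cone $\{r > R_1 + |t-t_0|\}$ using the decay $|W|\lesssim 1/|x|$ together with $p>3$ to make $\tau$ depend only on $p, R_1, C_5, W$ are exactly the ingredients of that argument.
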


\subsection{A Critical Element must be a Ground State} 

In this subsection we apply the theorems above and finally finish the proof of our main theorem.

\paragraph{Step 1} Given a critical element $u(x,t)$, we have already obtained a ground state $W_A(x)$ defined for $|x|>R(\phi, A)$ so that $|u(x,t)-W_A(x)| \lesssim 1/|x|^{p-2}$. Now we apply Theorem \ref{identity near infinity} on $u$ and $W_A (x)$. Our conclusion is that there is a radius $R_0 > R(\phi, A)$, so that
\[
 \left(u(x, t), \partial_t u (x, t)\right) = \left(W_A (x), 0\right), \quad \hbox{for}\; |x| > R_0,\; t\in \Rm.
\]
As a result, we know the essential radius of support $R(t)$ for the difference $(u(x,t) - W_A (x), \partial_t u(x,t))$ is well-defined and satisfies the inequality$R(t) \leq R_0$ for all $t$.

\paragraph{Step 2} Next we prove $u(x,t) = W(x)$ for all time $t$ and $|x| > R_W$. If this were false, we would find a time, say $t=0$, so that $R(t) > R_W$ and deduce a contradiction. We start by choosing $R_1 = [R(0) + R_W]/2$ and applying Theorem \ref{behaviour of compact solution} with $I = \Rm$ and $t_0 =0$. Without loss of generality, we assume that the radius of support $R(t)$ increases linearly in the positive time direction for a time period $\tau$. More precisely we have
\[
 R(t) = R(0) + t, \quad \hbox{for}\; t \in [0,\tau].
\]
Since $R(\tau) > R(0) > R_1$, we are able to apply Theorem \ref{behaviour of compact solution} at time $t_0 = \tau$ again with the same constant $R_1$. Our conclusion is that $R(t)$ has to increase in a linear manner in at least one time direction for the same time period $\tau$ as $t$ moves away from $t=\tau$. This must be the positive time direction since we have known that $R(t)$ decreases as $t \in [0,\tau]$ decreases. Therefore we obtain
\[
 R(t) = R(0) + t, \quad \hbox{for}\; t \in [0,2\tau].
\]
Repeating this argument, we have $R(t) = R(0) + t$ for all $t > 0$. This contradicts the already-known uniform upper bound $R(t) \leq R_0$.

\subsection{Completion of the proof} \label{sec: final} 

Finally we finish the proof of our main theorem. In fact, the following statements are all equivalent to each other
\begin{itemize}
 \item[(I)] The elliptic equation $-\Delta W = \phi(x) |W|^{p-1} W$ admit a nonzero radial solution $W$ in the space $\dot{H}^{s_p}(\Rm^3) \cap C^2 (\Rm^3)$.
 \item[(II)] Proposition \ref{existence of the ground state} gives at least one nontrivial $\dot{H}^{s_p}(\Rm^3)$ ground state; i.e. There exists $A \neq 0$, so that the solution $W_A(x)$ constructed
  in Section \ref{sec: existence} is contained in the space $\dot{H}^{s_p} (\Rm^3)$.
 \item[(III)] There exists a critical element, as described in Theorem \ref{existence of critical element}.
 \item[(IV)] There exists a radial solution $u$ to (CP1) with a maximal lifespan $(-T_-, T_+)$, so that it fails to scatter in the positive time direction but satisfies
  \[
   \sup_{t \in [0,T_+)} \left\|(u(\cdot,t),\partial_t u(\cdot,t))\right\|_{\dot{H}^{s_p} \times \dot{H}^{s_p-1}(\Rm^3)} < + \infty.
  \]
\end{itemize}
It suffices to prove that 
\[
 (I) \Longrightarrow (IV) \Longrightarrow (III) \Longrightarrow (II) \Longrightarrow (I).
\]
The first step here is trivial. Because a solution as described in (I) immediately gives an example that verifies (IV). The second step $(IV) \Longrightarrow (III)$ is exactly the compactness procedure we carried on in the first half of this paper. The final step $(II) \Longrightarrow (I)$ has been done in Section \ref{sec: classification} when we discussed the classification of ground states. Finally let us show $(III) \Longrightarrow (II)$. 
Given a critical element $u$, we can find a real number $A$ and a solution $W_A(x) \in C^2(\{x: |x|>R(\phi, A)\})$ to the elliptic equation $-\Delta W = \phi(x) |W|^{p-1} W$ by the argument in the earlier part of this section, so that
\begin{itemize}
 \item[(a)] We have $u(x,t) = W_A (x)$ for all $|x|>R(\phi, A),\; t \in \Rm$.
 \item[(b)] If $R(\phi,A)>0$, then we also have $\displaystyle \limsup_{|x| \rightarrow R(\phi,A)^+} |W_A(x)| = + \infty$.
\end{itemize}
If $R(\phi, A)$ were positive, then we could combine (a) and (b) above and obtain 
\[
 \limsup_{|x| \rightarrow R(\phi,A)^+} |u(x,0)| = + \infty. 
\]
This contradicts Lemma \ref{pointwise radial Hs}. Therefore we must have $R(\phi, A) = 0$ and $W_A = u(\cdot,0) \in \dot{H}^{s_p} (\Rm)$.

\end{document}